\renewcommand*{\backrefalt}[4]{%
	\ifcase #1 \footnotesize{(Not cited.)}%
	\or	\footnotesize{(Cited on page~#2)}
	\else	\footnotesize{(Cited on pages~#2)}%
	\fi}
\patchcmd{\thebibliography}{*}{}{}{}
\newtheorem{theorem}{Theorem}[section]
\newtheorem{lemma}[theorem]{Lemma}
\newtheorem{proposition}[theorem]{Proposition}
\newtheorem{corollary}[theorem]{Corollary}
{\theoremstyle{definition} \newtheorem{definition}{Definition}[section]}
{\theoremstyle{definition} \newtheorem{construction}{Construction}[section]}
{\theoremstyle{definition} }
{\theoremstyle{definition} \newtheorem*{exercise*}{Exercise}}
{\theoremstyle{definition}\newtheorem{example}{Example}}
{\theoremstyle{definition} }
{\theoremstyle{remark}\newtheorem*{remark}{Remark}}
{\theoremstyle{remark}}
\theoremstyle{remark}}
\theoremstyle{definition}}
\theoremstyle{definition}}
\newcommand{\F}{\mathbb{F}}
\newcommand{\R}{\mathbb{R}}
\newcommand{\id}{\mathbb{I}}
\newcommand{\Ext}{\mathrm{Ext}}
\newcommand{\Mor}{\mathrm{Mor}}
\newcommand{\CF}{\mathit{CF}}
\newcommand{\CFD}{\mathit{CFD}}
\newcommand{\CFAA}{\mathit{CFAA}}
\newcommand{\CFDA}{\mathit{CFDA}}
\newcommand{\HF}{\mathit{HF}}
\newcommand{\AZ}{\mathsf{AZ}}
\newcommand{\AT}{\mathsf{AT}}
\newcommand{\ind}{\mathrm{ind}}
\title{Composition maps in Heegaard Floer homology}
\author{Jesse Cohen}
\address{Department of Mathematics, University of Oregon, Eugene, OR 97403-1222, USA}
\email{jcohen2@uoregon.edu}
\thanks{This material is based upon work supported by the National Science Foundation under Grant No. DMS-2204214 and under Grant No. DMS-1928930, while the author was in residence at the Simons Laufer Mathematical Sciences Institute (previously known as MSRI) in Berkeley, California, during the Fall 2022 semester.}
\begin{document}
	\begin{abstract}
		We use results of Auroux \cite{Auroux2010} and Zemke \cite{Zemke2021} to prove that, in the morphism spaces formulation of Heegaard Floer homology given in \cite{Lipshitz_2011}, the opposite composition map agrees up to homotopy with the map on Heegaard Floer complexes induced by a pair-of-pants cobordism. As an application, we give an algorithm for computing arbitrary cobordism maps on hat Heegaard Floer homology.
	\end{abstract}
	\maketitle
	\section{Introduction}
	Heegaard Floer homology is a suite of invariants of closed oriented 3-manifolds and cobordisms between them introduced by Peter Ozsv\'{a}th and Zolt\'{a}n Szab\'{o} in \cite{OzsSz2004} (see also \cite{Ozsvath2006}). The particular variant of Heegaard Floer homology we will be concerned with is the so-called `hat' version. This invariant associates to a closed oriented 3-manifold $Y$ a graded $\F_2$-vector space $\widehat{\HF}(Y)$ and to each smooth, connected, 4-dimensional cobordism $W:Y_0\to Y_1$ a map $\widehat{F}_W:\widehat{\HF}(Y_0)\to\widehat{\HF}(Y_1)$ and this assignment is functorial with respect to composition of cobordisms. The vector space $\widehat{\HF}(Y)$ is the homology of a complex $\widehat{\CF}(Y)$ defined as a variant of the Lagrangian-intersection Floer complex of a pair of Lagrangian tori in a K\"{a}hler manifold. Bordered Heegaard Floer homology, defined by Lipshitz--Ozsv\'{a}th--Thurston in \cite{Lipshitz2018}, is a suite of invariants associated to a 3-manifold $Y$ with parametrized boundary taking the form of homotopy types of $A_\infty$-modules over algebras $\mathcal{A}(\mathcal{Z})$ associated to a combinatorialization $\mathcal{Z}$ of the boundary parametrization. In particular, if $Y$ has one boundary component, the bordered Floer package gives us a left type-$D$ structure $\widehat{\CFD}(Y)$, which one may think of as a projective left dg-module, whose homotopy type is a smooth invariant of $Y$. We briefly recall the construction of this object in Section 2. These modules satisfy pairing theorems as follows: if $Y_1$ and $Y_2$ are 3-manifolds with the same connected boundary surface and $Y_{12}=-Y_1\cup_\partial Y_2$ is the closed 3-manifold obtained by gluing $Y_1$ and $Y_2$ along their respective boundary parametrizations, then there is a homotopy equivalence  $\widehat{\CF}(Y_{12})\simeq\Mor^{\mathcal{A}}(\widehat{\CFD}(Y_1),\widehat{\CFD}(Y_2))$, where the right-hand side is the space of $\mathcal{A}=\mathcal{A}(-\mathcal{Z})$-module homomorphisms $\widehat{\CFD}(Y_1)\to\widehat{\CFD}(Y_2)$.
	\subsection{Results}
	Our main result is the following chain-level version of an assertion of Lipshitz--Oszv\'{a}th--Thurston given in \cite[Section 1.5]{Lipshitz_2011}.
	\begin{theorem}\label{MainTheorem}
		Let $Y_1$, $Y_2$, and $Y_3$ be bordered 3-manifolds, all of which have boundaries parametrized by the same surface $F$, and let $\mathcal{A}=\mathcal{A}(-F)$ be the algebra associated to $-F$. Let $Y_{ij}=-Y_i\cup_\partial Y_j$ and consider the pair of pants cobordism $W:Y_{12}\sqcup Y_{23}\to Y_{13}$ given by
		\begin{align}
			W=(\triangle\times F)\cup_{e_1\times F}(e_1\times Y_1)\cup_{e_2\times F}(e_2\times Y_2)\cup_{e_3\times F}(e_3\times Y_3),
		\end{align}
		where $\triangle$ is a triangle with edges $e_1$, $e_2$, and $e_3$ in cyclic order. If we define $\Mor^{\mathcal{A}}(Y_i,Y_j):=\Mor^{\mathcal{A}}(\widehat{\CFD}(Y_i),\widehat{\CFD}(Y_j))$ to be the space of left $\mathcal{A}$-module homomorphisms $\widehat{\CFD}(Y_i)\to\widehat{\CFD}(Y_j)$, then the composition map $f\otimes g\mapsto g\circ f$ fits into a homotopy commutative square of the form
		\begin{align}
			\begin{tikzcd}[ampersand replacement=\&,column sep=1.5cm]
				\Mor^{\mathcal{A}}(Y_1,Y_2)\otimes\Mor^{\mathcal{A}}(Y_2,Y_3)\arrow[d,"\simeq"']\arrow[r,"f\otimes g\mapsto g\circ f"] \& \Mor^{\mathcal{A}}(Y_1,Y_3)\arrow[d,"\simeq"]\\
				\widehat{\CF}(Y_{12})\otimes\widehat{\CF}(Y_{23})\arrow[r,"\widehat{f}_{W}"]\&\widehat{\CF}(Y_{13})
			\end{tikzcd}
		\end{align}
		where $\widehat{f}_{W}$ is the map induced by $W$ and the vertical maps come from the pairing theorem \cite[Theorem 1]{Lipshitz_2011}.
	\end{theorem}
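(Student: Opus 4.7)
The plan is to compute both the composition map $f\otimes g\mapsto g\circ f$ on morphism complexes and the pair-of-pants cobordism map $\widehat{f}_W$ as counts of pseudo-holomorphic triangles on a single bordered Heegaard triple, and then to argue that the pairing-theorem equivalences of \cite{Lipshitz_2011} intertwine the two counts up to homotopy. The key inputs will be Auroux's symmetric-product Fukaya-categorical model for bordered Heegaard Floer \cite{Auroux2010} and Zemke's naturality framework for Heegaard Floer cobordism maps \cite{Zemke2021}.

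I would begin by fixing a bordered Heegaard triple $(\Sigma,\bm{\alpha},\bm{\beta},\bm{\gamma},z)$ for the cobordism $W$, arranged so that, after gluing in appropriate bordered pieces for $Y_1,Y_2,Y_3$ along $F$, the three pairs $(\bm{\alpha},\bm{\beta})$, $(\bm{\beta},\bm{\gamma})$, $(\bm{\alpha},\bm{\gamma})$ present Heegaard diagrams for $Y_{12}, Y_{23}, Y_{13}$ respectively. With this diagram in place, the map $\widehat{f}_W$ is by construction the Ozsv\'{a}th--Szab\'{o} triangle map counting holomorphic triangles in $\mathrm{Sym}^g(\Sigma\setminus z)$ with boundary on the three totally real tori $\mathbb{T}_\alpha,\mathbb{T}_\beta,\mathbb{T}_\gamma$.

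On the other side of the square, I would apply Auroux's equivalence to reinterpret each $\widehat{\CFD}(Y_i)$ as an object in a (partially wrapped) Fukaya category of a symmetric product of the punctured boundary surface. Under this equivalence the morphism complex $\Mor^\mathcal{A}(Y_i,Y_j)$ is a Lagrangian intersection Floer complex, and ordinary composition of module maps corresponds to the Fukaya $A_\infty$ product $\mu_2$, which is itself a count of holomorphic triangles with boundary on three Lagrangians in the symmetric product of the punctured boundary. The top row of the diagram therefore also admits a description as a triangle-counting map, just on the Fukaya side.

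The final and most delicate step is to match the two triangle counts. I would set up a neck-stretching degeneration along the parametrizing surface $F$ that simultaneously degenerates the Ozsv\'{a}th--Szab\'{o} triangles into pairs consisting of Auroux-side triangles glued to matching triangles in the bordered pieces, thereby exhibiting the count defining $\widehat{f}_W$ as the same count defining $\mu_2$ after transport through the pairing theorem. Zemke's cobordism-naturality results \cite{Zemke2021} would supply the chain-level control needed to promote this moduli-space bijection to a chain-homotopy commutative square, with the pairing-theorem equivalences serving as the vertical maps. I expect the main obstacle to lie precisely here: controlling boundary degenerations of the one-parameter families of holomorphic squares that produce the chain homotopy, and verifying that any bubbling off into the bordered pieces is exactly compensated by the homotopy data already present in the pairing theorem.
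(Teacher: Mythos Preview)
Your outline has the same architecture as the paper---use Auroux's work to identify composition with a holomorphic triangle count, then use Zemke's framework to identify that triangle map with $\widehat{f}_W$---but it remains a sketch at exactly the points where the paper does real work, and one of those points is not optional.

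The paper does not invoke Auroux's Fukaya-categorical equivalence abstractly. Instead it writes down a specific bordered Heegaard triple $\AT(\mathcal{Z})$, glues in doubled diagrams $\mathcal{H}_i^+$ to form a closed triple $\AT_{1,2,3}$, and then proves a concrete combinatorial lemma (Lemma~\ref{TriangleLemma}): there are no positive index-zero triangle domains in $\AT$ meeting $\partial\AT$ with the relevant cyclic ordering of corners. This is done by a flat-metric Euler-measure calculation showing $e(B)=\tfrac{g}{4}$, which forces the number $m$ of boundary Reeb chords to vanish. Without this lemma, the triangle count on $\AT$ does not reduce to the algebra multiplication in $\mathcal{A}(-\mathcal{Z})$, and hence Proposition~\ref{FdeltadeltaProp} and Theorem~\ref{AT-Comp} (triangle count $=$ composition) would not go through. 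Your proposal has no analogue of this step; appealing to ``$\mu_2$ in the partially wrapped Fukaya category'' hides precisely this computation.

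Your neck-stretching step is also where the paper diverges from you. Rather than running a bespoke degeneration, the paper cites the already-established pairing theorem for triangles \cite[Proposition~5.35]{LOT2} to split the count on $\AT_{1,2,3}$ into a count on $\AT$ times counts on the $\mathcal{H}_i^+$, and then handles the $\mathcal{H}_i^+$ pieces with a small-perturbation/nearest-point argument (Lemma~\ref{PerturbationLemma}). Finally, the paper explicitly identifies the $4$-manifold $X(\AT_{1,2,3})$ with the pair-of-pants cobordism pre- and post-composed with cancelling $1$- and $3$-handle cobordisms (Corollary~\ref{PantsCorollary}), which is what licenses the invocation of Zemke's Theorem~9.1. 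Your proposal asserts that the triple ``presents'' $W$ but does not verify it; in practice this identification (Section~5) is a genuine piece of the argument, not a formality.

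In short: your plan is pointed in the right direction, but to turn it into a proof you need (i) a concrete interpolating triple, (ii) the no-boundary-chord lemma for that triple, (iii) the pairing theorem for triangles in place of an ad hoc neck-stretch, and (iv) an explicit diffeomorphism between the $4$-manifold of the triple and $W$ (up to handle cancellation).
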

	\begin{remark}
		The complexes $\Mor^{\mathcal{A}}(Y_i,Y_j)$ are well-defined only up to a choice of bordered Heegaard diagrams for $Y_i$ and $Y_j$.
	\end{remark}
	To prove this, we use a bordered Heegaard triple $\AT$, originally defined by Auroux in \cite{Auroux2010}. In particular, we prove the following.
	\begin{theorem}
		Let $\mathcal{H}_i$ be bordered Heegaard diagrams for bordered 3-manifolds $Y_i$ for $i=1,2,3$ and let $\mathcal{H}_i^+$ be the bordered Heegaard triple obtained by doubling the $\beta$-circles in $\mathcal{H}_i$ by a small Hamiltonian isotopy. Then the map
		\begin{align}
			\widehat{G}_{\AT}:\Mor^{\mathcal{A}}(Y_1,Y_2)\otimes\Mor^{\mathcal{A}}(Y_2,Y_3)\to\Mor^{\mathcal{A}}(Y_1,Y_3)
		\end{align}
		induced by counting pseudoholomorphic triangles in $\AT\cup\mathcal{H}_1^+\cup\mathcal{H}_2^+\cup\mathcal{H}_3^+$, identifying $\Mor^{\mathcal{A}}(Y_i,Y_j)$ with $\overline{\widehat{\CFD}(Y_i)}\boxtimes\mathcal{A}\boxtimes\widehat{\CFD}(Y_j)$, agrees up to homotopy with the composition map $f\otimes g\mapsto g\circ f$.
	\end{theorem}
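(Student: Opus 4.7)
The plan is to reduce the identification of $\widehat{G}_{\AT}$ with the opposite composition map to two pieces: a pairing-type theorem for holomorphic triangles in the combined bordered diagram, together with separate algebraic identifications of the triangle map induced by $\AT$ and of those induced by each $\mathcal{H}_i^+$. First I would establish a bordered pairing theorem for triple diagrams, stating that the triangle count on $\AT\cup\mathcal{H}_1^+\cup\mathcal{H}_2^+\cup\mathcal{H}_3^+$ decomposes, up to homotopy, as a box tensor product of the triangle counts on each of the four constituents. This is a one-dimension-higher analog of \cite[Theorem 1]{Lipshitz_2011}: a neck-stretching argument along the bordered boundary of $\AT$ degenerates a triangle in the glued diagram into compatible triangles in each piece, with the boundary Reeb chords recording the algebra action.

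Next I would identify the two kinds of triangle maps that appear. By Auroux's theorem in \cite{Auroux2010}, the triangle count on $\AT$ realizes, as a map of \textit{DAA}-trimodules between the appropriate identity bimodules, the multiplication $\mu:\mathcal{A}\otimes\mathcal{A}\to\mathcal{A}$. For each $\mathcal{H}_i^+$, the doubling of the $\beta$-circles by a small Hamiltonian isotopy provides generators indexed by nearest-point intersections, and a small-triangle computation — paralleling the closed-surface result that small-isotopy triangle maps compute the identity — shows that the induced type-$DA$ bimodule map is chain homotopic to the identity on $\widehat{\CFD}(Y_i)$. Zemke's framework in \cite{Zemke2021} supplies the algebraic dictionary to transport these triangle maps through the box tensor product into operations on the morphism spaces.

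Assembling the two steps, the triangle count $\widehat{G}_{\AT}$ applied to $f\otimes g$ contracts the $\widehat{\CFD}(Y_2)\otimes\overline{\widehat{\CFD}(Y_2)}$ factors through the identity on $\widehat{\CFD}(Y_2)$, couples the two copies of $\mathcal{A}$ via $\mu$, and leaves $\overline{\widehat{\CFD}(Y_1)}$ and $\widehat{\CFD}(Y_3)$ untouched. Under the identification $\Mor^{\mathcal{A}}(Y_i,Y_j)\cong\overline{\widehat{\CFD}(Y_i)}\boxtimes\mathcal{A}\boxtimes\widehat{\CFD}(Y_j)$, this is precisely the explicit formula for the opposite composition $f\otimes g\mapsto g\circ f$ in the dg category of left type-$D$ structures over $\mathcal{A}$, so the theorem follows.

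The main obstacle is the first step: the pairing theorem for bordered triples. Unlike the surface-level pairing theorem, gluing holomorphic triangles across a common bordered region requires controlling degenerations near the $\AT$-piece, carefully keeping track of the three families of Reeb chords adjacent to the $\alpha$-arcs at each end, and checking that boundary-degenerate configurations (where disks bubble off rather than triangles) assemble into an honest chain homotopy rather than introducing anomalous terms. Once this analytic input is in place, the algebraic identification with the opposite composition is essentially formal, given Auroux's identification of $\AT$ with multiplication and the small-isotopy identification of each $\mathcal{H}_i^+$ with the identity.
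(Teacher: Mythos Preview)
Your overall strategy matches the paper's: decompose the triangle count via a pairing theorem, identify the $\AT$ piece with multiplication via Auroux, and identify each $\mathcal{H}_i^+$ piece with a nearest-point map. However, you have misidentified where the real work lies, and there is a genuine gap.

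First, the pairing theorem for bordered Heegaard triples is not something you need to establish; it is already available in \cite{LOT2}, and the paper simply cites it. So what you flag as the ``main obstacle'' is not one.

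The actual gap is the following. Auroux's result \cite[Proposition 4.8]{Auroux2010} identifies only the \emph{provincial} triangle count in $\AT$ --- triangles that do not meet $\partial\AT$ --- with multiplication in $\mathcal{A}$. But once the pairing theorem is applied and the $\mathcal{H}_i^+$ pieces are split off, what remains on the $\AT$ side is a count of rigid triangles in $\AT$ with arbitrary Reeb chord asymptotics at the three boundary components, a priori a strictly richer map than Auroux's. The paper closes this gap with a separate lemma (Lemma~\ref{TriangleLemma}): an Euler measure computation, exploiting a carefully chosen flat metric on $\AT$ in which all curve intersections are at $60^\circ$ and $120^\circ$, shows that no positive domain of index zero with the correct cyclic ordering of corners can touch $\partial\AT$. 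Hence the full bordered triangle map on $\AT$ \emph{coincides} with Auroux's provincial one. This lemma is the genuine technical heart of the argument, and your proposal does not account for it; without it, invoking Auroux's theorem as a ``\textit{DAA}-trimodule'' identification is not justified.

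Finally, Zemke's framework plays no role in the proof of this particular theorem; the paper invokes \cite{Zemke2021,Zemke2021b} only afterward, to match $\widehat{G}_{\AT}$ with the Ozsv\'ath--Szab\'o cobordism map $\widehat{F}_W$ in the proof of Theorem~\ref{MainTheorem}.
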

	We then elaborate on the construction of 4-manifolds with boundary and corners from bordered Heegaard triples and show (Corollary \ref{PantsCorollary}) that the triple $\AT_{1,2,3}$ represents a variant of the pair of pants cobordism described above and use this to prove Theorem \ref{MainTheorem} via results of Zemke \cite{Zemke2021,Zemke2021b}. Lastly, as a consequence of Theorem \ref{MainTheorem}, we give a new algorithm for computing the map $\widehat{\HF}(Y_0)\to\widehat{\HF}(Y_1)$ associated to a cobordism $X:Y_0\to Y_1$, at the chain level, via composition of morphisms. This algorithm gives an alternative to the combinatorial approaches of \cite{LipshitzManolescuWang} and \cite{ManolescuOzsvathThurston2020grid}.
	\subsection*{Acknowledgments}
	The author would like to thank Gary Guth, Robert Lipshitz, Maggie Miller, and Dylan Thurston for their helpful comments and suggestions.
	\section{Background}
	\begin{definition}
		Fix a dg-algebra $\mathcal{A}$ over a ring $\Bbbk$. A (left) type-$D$ structure over $\mathcal{A}$ is a pair $(N,\delta^1_N)$ consisting of a graded $\Bbbk$-module $N$ and a map
		\begin{align}
			\delta_N^1:N\to(\mathcal{A}\otimes N)[1]
		\end{align}
		satisfying the compatibility condition
		\begin{align}
			(\mu_2\otimes\id_N)\circ(\id_\mathcal{A}\otimes\delta_N^1)\circ\delta_N^1+(\mu_1\otimes\id_N)\circ\delta_N^1=0.
		\end{align}
	\end{definition}
	A type-$D$ structure homomorphism is a $\Bbbk$-module map $f:N_1\to\mathcal{A}\otimes N_2$ satisfying the equation
	\begin{align}
		(\mu_2\otimes\id_{N_2})\circ(\id_\mathcal{A}\otimes f)\circ\delta_{N_1}^1+(\mu_2\otimes\id_{N_2})\circ(\id_\mathcal{A}\otimes\delta_{N_2}^1)\circ f+(\mu_1\otimes\id_{N_2})\circ f=0
	\end{align}
	and a homotopy between type-$D$ structure homomorphisms $f,g:N_1\to\mathcal{A}\otimes N_2$ is a $\Bbbk$-module homomorphism $h:N_1\to(\mathcal{A}\otimes N_2)[-1]$ such that
	\begin{align*}
		(\mu_2\otimes\id_{N_2})\circ(\id_{\mathcal{A}}\otimes h)\circ\delta_{N_1}^1+(\mu_2\otimes\id_{N_2})\circ(\id_{\mathcal{A}}\otimes\delta_{N_2}^1)\circ h+(\mu_1\otimes\id_{N_2})\circ h=f-g.
	\end{align*}
	\begin{example}
		Let $\mathcal{A}$ be a dg-algebra over $\Bbbk=\F_2^k$ and $M$ a dg-module over $\mathcal{A}$ which is free as an $\mathcal{A}$-module. Fix an $\mathcal{A}$-basis $B$ for $M$ and let $N=\Bbbk B$. Then the restriction of the differential $\partial_M$ to $N$ determines a map $\delta^1:N\to(\mathcal{A}\otimes N)[1]$ satisfying the type-$D$ structure relation. Any dg-module homomorphism $M_1\to M_2$ induces a corresponding map of type-$D$ structures and the converse is true for type-$D$ structures obtained in this manner. Similarly, homotopies of such maps are equivalent to homotopies of type-$D$ structures.
	\end{example}
	On the other hand, if $(N,\delta^1)$ is a left type-$D$ structure over $\mathcal{A}$, then $\mathcal{A}\otimes N$ is a left differential $\mathcal{A}$-module with differential
	\begin{align}
		m_1=(\mu_2\otimes\id_N)\circ(\id_{\mathcal{A}}\otimes\delta^1)+\mu_1\otimes\id_N
	\end{align}
	and module structure map $m_2=\mu_2\otimes\id_N$. As in the above example, type-$D$ homomorphisms and homotopies induce chain homomorphisms and chain homotopies, respectively.
	\begin{definition}
		Given a type-$D$ structure $(N,\delta^1)$, recursively define maps
		\begin{align}
			\delta^i:N\to(\mathcal{A}^{\otimes i}\otimes N)[i]
		\end{align}
		by taking $\delta^0=\id_N$ and $\delta^i=(\id_{\mathcal{A}^{\otimes i-1}}\otimes\delta^1)\circ\delta^{i-1}$. We say that $(N,\delta^1)$ is \emph{bounded} if, for all $x\in N$, there is an integer $n$ such that $\delta^i(x)=0$ for all $i\geq n$.
	\end{definition}
	\begin{definition}
		A \emph{pointed matched circle} $\mathcal{Z}$ is a quadruple $(Z,\bm{a},M,z)$ consisting of an oriented circle $Z$, a subset $\bm{a}\subset Z$ consisting of $4k$ points $a_1,\dots,a_{4k}$, a 2-to-1 function $M:\bm{a}\to[2k]$ called a \emph{matching}, and a basepoint $z\in Z\smallsetminus\bm{a}$. We require that surgering $Z$ along the matching $M$ yields a single circle; this ensures that the following construction is well-defined. Given a pointed matched circle $\mathcal{Z}$, there is an oriented surface $F(\mathcal{Z})$ obtained by attaching oriented 2-dimensional 1-handles to the points $\bm{a}\subset\partial D^2\subset D^2$ according to the matching $M$ to obtain a surface with a single boundary component, along which we glue a second disk.
	\end{definition}
	We will regard each pointed matched circle $\mathcal{Z}=(Z,\bm{a},M,z)$ as a contact 1-manifold and refer to intervals $\rho\subset Z$ which have ends on $\bm{a}$ and do not cross $z$ as \emph{Reeb chords}.
	\begin{definition}
		A \emph{bordered} $3$-\emph{manifold} $Y$ is an oriented $3$-manifold with boundary together with an orientation-preserving diffeomorphism $\phi:F(\mathcal{Z})\to\partial Y$. Such data can be specified by a \emph{bordered Heegaard diagram} $(\overline{\Sigma},\bm{\alpha},\bm{\beta},z)$, where:
		\begin{itemize}
			\item $\overline{\Sigma}$ is a compact, oriented, surface of some genus $g$,
			\item $\bm{\alpha}=\bm{\alpha}^a\cup\bm{\alpha}^c=\{\alpha_1^a,\dots,\alpha_{2k}^a,\alpha_1^c,\dots,\alpha_{g-k}^c\}$ is a collection of $g+k$ pairwise-disjoint curves in $\overline{\Sigma}$ consisting of $g-k$ embedded circles $\alpha_i^c$ in the interior of $\overline{\Sigma}$ and $2k$ arcs $\alpha_j^a$ with boundary on and transverse to $\partial\overline{\Sigma}$,
			\item $\bm{\beta}=\{\beta_1,\dots,\beta_g\}$ is a collection of $g$ pairwise disjoint embedded circles $\beta_i$ in the interior of $\overline{\Sigma}$,
			\item and $z$ is a point in $\partial\overline{\Sigma}\smallsetminus(\bm{\alpha}\cap\partial\overline{\Sigma})$
		\end{itemize}
		such that $\overline{\Sigma}\smallsetminus\bm{\alpha}$ and $\overline{\Sigma}\smallsetminus\bm{\beta}$ are connected and any intersections of $\alpha$- and $\beta$ curves is transverse. Moreover, two bordered Heegaard diagrams specify the same bordered 3-manifold $Y$ if and only if they can be related to one another by a finite sequence of Heegaard moves fixing the endpoints of the $\alpha$-arcs (cf. \cite[Chapter 4]{Lipshitz2018}).
	\end{definition}
	In \cite{Lipshitz2018}, Lipshitz--Ozsv\'{a}th--Thurston associate to a bordered Heegaard diagram $\mathcal{H}=(\overline{\Sigma},\bm{\alpha},\bm{\beta},z)$ for $Y$ a left type-$D$ structure $\widehat{\CFD}(Y):=\widehat{\CFD}(\mathcal{H})$, over an algebra $\mathcal{A}=\mathcal{A}(-\mathcal{Z})$ associated to the pointed matched circle $\mathcal{Z}$ with its opposite orientation, whose homotopy type is an invariant of the bordered 3-manifold $Y$. We briefly recall this construction here.
	\begin{definition}
		Let $\mathcal{H}=(\overline{\Sigma},\bm{\alpha},\bm{\beta},z)$ be a genus $g$ bordered Heegaard diagram for a 3-manifold $Y$. A \emph{generator} of $\mathcal{H}$ is a $g$-element set $\bm{x}=\{x_1,\dots,x_g\}$ consisting of intersection points of $\bm{\alpha}$- and $\bm{\beta}$-curves such that
		\begin{itemize}
			\item exactly one $x_i$ lies (1) on each $\alpha$-circle and (2) on each $\beta$-circle, and
			\item at most one $x_i$ lies on each $\alpha$-arc.
		\end{itemize}
		Denote the set of generators of $\mathcal{H}$ by $\mathfrak{S}(\mathcal{H})$.
	\end{definition}
	The $\F_2$-vector space $\F_2\mathfrak{S}(\mathcal{H})$ spanned by $\mathfrak{S}(\mathcal{H})$ admits a left-action by the subalgebra $\mathcal{I}\subset\mathcal{A}$ of idempotents. As a left $\mathcal{A}$-module, $\widehat{\CFD}(\mathcal{H})$ is then defined by $\widehat{\CFD}(\mathcal{H}):=\mathcal{A}\otimes_{\mathcal{I}}\F_2\mathfrak{S}(\mathcal{H})$ and it is endowed with the structure of a left differential module by taking
	\begin{align}
		\partial(\id\otimes\bm{x})=\sum_{\bm{y}\in\mathfrak{S}(\mathcal{H})}\sum_{B\in\pi_2(\bm{x},\bm{y})}a_{\bm{x},\bm{y}}^B\otimes\bm{y},
	\end{align}
	where
	\begin{align}
		a_{\bm{x},\bm{y}}^B=\sum_{\vec{\rho}\,|\,\mathrm{ind}(B,\vec{\rho})=1}\#\mathcal{M}^B(\bm{x},\bm{y};\vec{\rho})a(-\vec{\rho}).
	\end{align}
	Here, $\pi_2(\bm{x},\bm{y})$ is the space of homology classes of Whitney disks connecting $\bm{x}$ and $\bm{y}$, $\vec{\rho}=(\rho_1,\dots,\rho_n)$ is a sequence of Reeb chords in $\partial\overline{\Sigma}$, $\mathcal{M}^B(\bm{x},\bm{y};\vec{\rho})$ is the moduli space of pseudoholomorphic representatives of $B$ in $\mathrm{int}(\overline{\Sigma})\times[0,1]\times\R$ with asymptotic condition $\vec{\rho}$ at east infinity modulo translation, and $a(-\vec{\rho})=a(-\rho_1)\cdots a(-\rho_n)$ is the product of algebra elements associated to the tuple $-\vec{\rho}=(-\rho_1,\dots,-\rho_n)$ of Reeb chords with their orientations reversed. The quantity $\ind(B,\vec{\rho})$ is a Maslov-type index which guarantees that the moduli space $\mathcal{M}^B(\bm{x},\bm{y};\vec{\rho})$ is 0-dimensional when $\ind(B,\vec{\rho})=1$. More generally, the moduli spaces $\mathcal{M}^B(\bm{x}_1,\dots,\bm{x}_n;\vec{\rho})$ of pseudoholomorphic $n$-gons in $\mathrm{int}(\overline{\Sigma})\times\mathbb{D}_n^2$ for $n>2$, where $\mathbb{D}_n^2$ is an $n$-times boundary-punctured disk, is 0-dimensional whenever $\ind(B,\vec{\rho})=0$.
	
	In \cite{Lipshitz_2011}, Lipshitz--Ozsv\'{a}th--Thurston show that, for bordered 3-manifolds $Y_1$ and $Y_2$ with the same boundary, the chain complex $\Mor^{\mathcal{A}}(\widehat{\CFD}(Y_1),\widehat{\CFD}(Y_2))$ of $\mathcal{A}$-module maps $\widehat{\CFD}(Y_1)\to\widehat{\CFD}(Y_2)$ is homotopy equivalent to the Heegaard Floer chain complex $\widehat{\CF}(-Y_1\cup_\partial Y_2)$. There, they considered an $(\alpha,\beta)$-bordered Heegaard diagram $\AZ(\mathcal{Z})$, first introduced by Auroux in \cite{Auroux2010}, associated to $\mathcal{Z}$ and show that the bordered Floer bimodule $\widehat{\CFAA}(\AZ(\mathcal{Z}))$ is isomorphic, as a left-right $(\mathcal{A}(-\mathcal{Z}),\mathcal{A}(-\mathcal{Z}))$-bimodule, to the regular bimodule ${}_{\mathcal{A}(-\mathcal{Z})}\mathcal{A}(-\mathcal{Z})_{\mathcal{A}(-\mathcal{Z})}$. As a corollary, they then deduce that
	\begin{align}
		\begin{split}
			\widehat{\CF}(-Y_1\cup_\partial Y_2)&\simeq\overline{\widehat{\CFD}(Y_1)}\boxtimes\widehat{\CFAA}(\AZ(\mathcal{Z}))\boxtimes\widehat{\CFD}(Y_2)\\
			&\cong\Mor^{\mathcal{A}(-\mathcal{Z})}(\widehat{\CFD}(Y_1),\widehat{\CFD}(Y_2)).
		\end{split}
	\end{align}
	The diagram $\AZ(\mathcal{Z})$ is defined as follows: if $k$ is the genus of the surface $F(\mathcal{Z})$ determined by $\mathcal{Z}$, consider the planar triangle $\triangle_k$ bounded by the coordinate axes and the line $x+y=4k+1$, which we will call the \emph{diagonal} of $\triangle_k$. Let $\Sigma'$ be the quotient of $\triangle_k$ by identifying small neighborhoods of the points $(i,4k+1-i)$ and $(j,4k+1-j)$ in the diagonal if $i$ and $j$ are matched in $\mathcal{Z}$ in such a way that the result is an orientable genus $k$ surface with a single boundary component. If $i$ and $j$ are matched in $\mathcal{Z}$, then the disconnected subspace $\triangle_k\cap(\{x=i\}\cup\{x=j\})$ descends to a single arc $\beta_i$ in $\Sigma'$ and, similarly, the subspace $\triangle_k\cap(\{y=4k+1-i\}\cup\{y=4k+1-j\})$ descends to a single arc $\alpha_i$. Let $\Sigma$ be the result of attaching a 1-handle to $\partial\Sigma'$ along the 0-sphere $\{(0,0),(4k+1,0)\}$ and let $\bm{z}$ be a neighborhood of the core of this 1-handle. Then $\AZ(\mathcal{Z})$ is the diagram $(\Sigma,\bm{\alpha},\bm{\beta},\bm{z})$, where $\bm{\alpha}=\{\alpha_i\}$ and $\bm{\beta}=\{\beta_i\}$.
	
	\begin{figure}
		\begin{center}
			\includegraphics[scale=1]{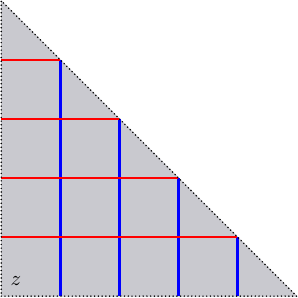}
		\end{center}
		\caption{The triangle $\triangle_1$ and the arcs which descend to the $\alpha$- and $\beta$-arcs in the interpolating piece $\AZ(\mathcal{Z})$ associated to the unique genus 1 pointed matched circle.}
	\end{figure}
	\begin{figure}
		\begin{center}
			\includegraphics[scale=1]{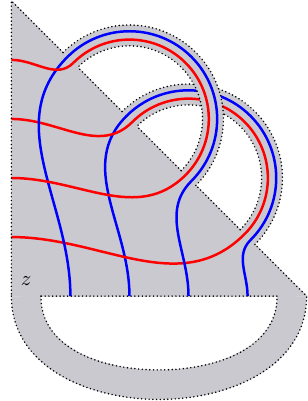}
		\end{center}
		\caption{The diagram $\AZ(\mathcal{Z})$ associated to the genus 1 pointed matched circle.}
	\end{figure}
	\section{An interpolating triple}
	We will consider a similarly defined bordered Heegaard triple associated to $\mathcal{Z}$, also due to Auroux, which we call $\AT(\mathcal{Z})$. We construct $\AT(\mathcal{Z})$ as follows: if, as before, $k$ is the genus of $F(\mathcal{Z})$, consider the square $\square_k$ in the plane bounded by the coordinate axes and the lines $x=4k+1$ and $y=4k+1$ and let $\Sigma'$ be the quotient of $\square_k$ obtained by identifying small neighborhoods of the points $(i,4k+1)$ and $(j,4k+1)$ in the segment $\square_k\cap\{y=4k+1\}$ if $i$ and $j$ are matched in $\mathcal{Z}$ in such a way that the result is an orientable genus $k$ surface with one boundary component. Now, if $i$ and $j$ are matched in $\mathcal{Z}$, then the disconnected subspaces
	\begin{align}
		\begin{split}
			g_i&=\square_k\cap(\{-x+y=4k+1-i\}\cup\{-x+y=4k+1-j\})\\
			d_i&=\square_k\cap(\{x=i\}\cup\{x=j\})\\
			e_i&=\square_k\cap(\{x+y=4k+1-i\}\cup\{x+y=4k+1-j\})
		\end{split}
	\end{align}
	descend to single arcs $\gamma_i'$, $\delta_i'$, and $\varepsilon_i'$, respectively, in $\Sigma'$. Now let $\overline{\Sigma}_{\AT}$ be the result of attaching 1-handles to $\partial\Sigma'$ along the 0-spheres $\{(0,0),(4k+1,0)\}$ and $\{(4k+1,0),(4k+1)\}$ and let $\bm{z}$ be a neighborhood of the core of either handle and take $\AT(\mathcal{Z})$ to be the triple $(\overline{\Sigma}_{\AT},\bm{\gamma},\bm{\delta},\bm{\varepsilon},\bm{z})$, where, as before, $\bm{\gamma}=\{\gamma_i\}$, $\bm{\delta}=\{\delta_i\}$, and $\bm{\varepsilon}=\{\varepsilon_i\}$ are given by suitably generic Hamiltonian perturbations of the arcs $\gamma_i'$, $\delta_i'$, and $\varepsilon_i'$. Note that the unperturbed arcs have nongeneric triple intersections so the perturbations are strictly necessary in order for the result to be an admissible diagram in the sense of \cite{Lipshitz2018}. We will perturb the triple intersections, in the same manner as given by Auroux in \cite{Auroux2010}, as shown in Figure \ref{Perturbation}. We also include in $\AT$ the data of an embedded trivalent tree $\bm{z}$ as shown in Figure \ref{Square}; in the quotient $\AT$, this tree has one leaf on each boundary component.
	
	Since it will be convenient for us to have done so later, we will modify $\AT$ slightly by assuming that the spaces $g_i$ and $e_i$ are given by lines of slope $\tan(\frac{\pi}{6})$ and $\tan(\frac{5\pi}{6})$, respectively, instead of $1$ and $-1$. We assume these again meet the top boundary segment of $\square_k$ at the points $(i,4k+1)$. If we think of these lines as the intersections of lines in $\R^2$ with $\square_k$, then the perturbations of the curves in $\AT$ which removes the nongeneric triple points can be realized by translations of the $g$- and $e$-lines in the plane as shown in Figure \ref{AT1}. This choice is motivated by the proof of Lemma \ref{TriangleLemma}.
	
	\begin{figure}\label{Perturbation}
		\begin{center}
			\includegraphics[scale=1]{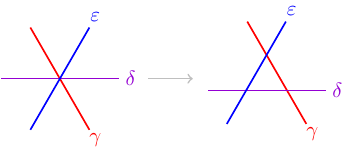}
		\end{center}
		\caption{Auroux's perturbation convention for triple intersections in $\AT(\mathcal{Z})$.}
	\end{figure}
	
	\begin{figure}
		\begin{center}
			\includegraphics[scale=1]{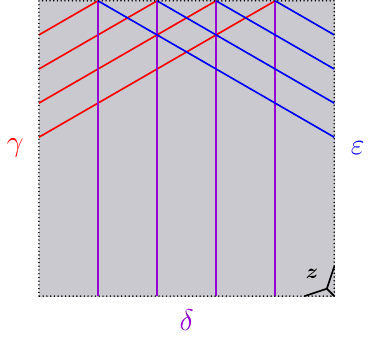}
		\end{center}
		\caption{The square $\square_1$ and the arcs which descend to the $\gamma$-, $\delta$-, and $\varepsilon$-arcs in the interpolating triple $\AT(\mathcal{Z})$ associated to the unique genus 1 pointed matched circle.}
		\label{Square}
	\end{figure}
	\begin{figure}
		\begin{center}
			\includegraphics[scale=0.875]{./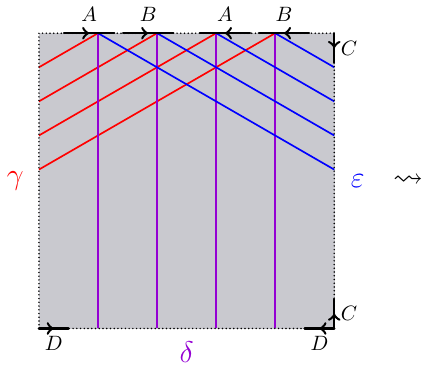}\hspace{0.125cm} \includegraphics[scale=0.875]{./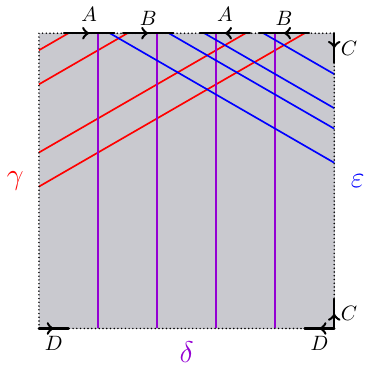}
		\end{center}
		\caption{Perturbing the diagram using planar translations to obtain the triple $\AT(\mathcal{Z})$ (right) associated to the genus 1 pointed matched circle. Here, we draw the segments of $\partial\square_k$ which are identified in $\AT$ as oriented black lines and label the glued pairs of segments with the same letter.\label{AT1}}
	\end{figure}
	Now let $\bm{\eta}$ be any one of $\bm{\gamma}$, $\bm{\delta}$, or $\bm{\varepsilon}$ and  let $\partial_\eta\AT(\mathcal{Z})$ be the component of $\partial\AT(\mathcal{Z})$ which intersects $\bm{\eta}$ nontrivially. Note that, by construction, the result of forgetting $\bm{\eta}$ and gluing a disk to $\Sigma$ along $\partial_\eta\AT(\mathcal{Z})$ is a copy of $\AZ(\mathcal{Z})$. For $\bm{\eta},\bm{\theta}\in\{\bm{\gamma},\bm{\delta},\bm{\varepsilon}\}$, let $\AZ_{\eta\theta}$ be the diagram obtained by deleting the collection of arcs $\bm{\zeta}\in\{\bm{\gamma},\bm{\delta},\bm{\varepsilon}\}\smallsetminus\{\bm{\eta},\bm{\theta}\}$ and let $\mathcal{A}_{\eta\theta}=\widehat{\CFAA}(\AZ_{\eta\theta})$. We recall \cite[Proposition 4.8]{Auroux2010} which says that the map $\mathcal{A}_{\delta\varepsilon}\otimes_\F\mathcal{A}_{\gamma\delta}\to\mathcal{A}_{\gamma\varepsilon}$ given by counting provincial holomorphic triangles in $\AT(\mathcal{Z})$ coincides with multiplication under the identification of $\mathcal{A}_{\eta\theta}$ with $\mathcal{A}(\mathcal{Z})$.
	\begin{proposition}[\cite{Lipshitz_2011}, Proposition 4.1]
		The left-right $(\mathcal{A}(\mathcal{Z}),\mathcal{A}(\mathcal{Z}))$-bimodule $\widehat{\CFAA}(\AZ_{\eta\theta})$ is isomorphic to $\mathcal{A}(\mathcal{Z})$.
	\end{proposition}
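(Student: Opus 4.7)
The plan is to reduce the statement to Auroux's original computation \cite{Auroux2010} that $\widehat{\CFAA}(\AZ(\mathcal{Z}))\cong\mathcal{A}(\mathcal{Z})$ as an $(\mathcal{A}(\mathcal{Z}),\mathcal{A}(\mathcal{Z}))$-bimodule, by showing that $\AZ_{\eta\theta}$ is, after capping off a trivial boundary component, diffeomorphic as a bordered diagram to $\AZ(\mathcal{Z})$. First, I would verify that the surface $\overline{\Sigma}_{\AT}$ has exactly three boundary components produced by the two $1$-handle attachments to $\Sigma'$, and that each of the families $\bm{\gamma}$, $\bm{\delta}$, $\bm{\varepsilon}$ has all of its $4k$ arc endpoints on a single (distinct) one of these components. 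In particular, each such boundary component inherits the pointed matched circle $\mathcal{Z}$, with basepoint provided by the appropriate leaf of the trivalent tree $\bm{z}$. Deleting $\bm{\zeta}$ leaves the boundary component $\partial_\zeta\AT(\mathcal{Z})$ devoid of $\alpha$-arc endpoints, so the matched circle on it degenerates to the empty pointed matched circle (a basepointed $S^1$).

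Second, I would cap off this trivial boundary component by a disk containing the corresponding leaf of $\bm{z}$. Since the associated surface is $S^2$ and the bordered $3$-manifold region near it is $B^3$, this operation does not change $\widehat{\CFAA}$. What remains is a bordered Heegaard diagram with two boundary components and two families of arcs. Inspecting the configuration on $\square_k$, the arcs of slopes $\tan(\pi/6)$, vertical, and $\tan(5\pi/6)$, taken two at a time, can be brought by an ambient isotopy (undoing the planar translations used to resolve the triple points in the construction of $\AT$) to the configuration of arcs in $\triangle_k$ defining $\AZ(\mathcal{Z})$, up to a diffeomorphism of the underlying surface that preserves each boundary component, its pointed matched circle, and its basepoint. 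For the pairs $(\bm{\delta},\bm{\varepsilon})$ and $(\bm{\gamma},\bm{\varepsilon})$ the argument is identical after a reflection or rotation of $\square_k$.

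Third, by invariance of $\widehat{\CFAA}$ under diffeomorphism and Heegaard moves of the underlying bordered diagram \cite{Lipshitz2018}, this geometric identification gives $\widehat{\CFAA}(\AZ_{\eta\theta})\cong\widehat{\CFAA}(\AZ(\mathcal{Z}))$, and Auroux's theorem provides the final identification with $\mathcal{A}(\mathcal{Z})$ as bimodules. The main obstacle is the second step, namely the careful verification that the straightening isotopy, the capping of the trivial boundary, and the tracking of the basepoint tree together yield a diffeomorphism of bordered diagrams compatible with the bordered structures on the two surviving boundary components; the key geometric content here is already visible in the genus $1$ pictures (Figures \ref{Square}, \ref{AT1}), and the general case follows formally by adding parallel strands of each color in a manner that does not interact with the capping.
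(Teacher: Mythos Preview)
Your proposal is correct and does establish the isomorphism, but it takes a different route from the paper. The paper does not reduce to the previously computed case $\AZ(\mathcal{Z})$; instead it re-runs the argument of \cite[Proposition~4.1]{Lipshitz_2011} directly on $\AZ_{\eta\theta}$ inside the square $\square_k$: to each intersection point $x\in\bm{\eta}\cap\bm{\theta}$ one associates the triangular region $T_x\subset\square_k$ bounded by the two segments through $x$ and the top edge $\{y=4k+1\}$, reads off a Reeb chord (or smeared horizontal strand) from $T_x\cap\{y=4k+1\}$, and thus obtains an explicit bijection between $\mathfrak{S}(\AZ_{\eta\theta})$ and the strand basis of $\mathcal{A}(\mathcal{Z})$. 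Niceness of the diagram in the sense of \cite{Sarkar2010} then identifies the differential and the $m_2$-operations combinatorially.

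Your reduction is more economical, but the paper's explicit identification is not incidental: the bijection via the triangles $T_x$ is invoked immediately afterward (in the subsequent Remark and in the appeal to \cite[Proposition~4.8]{Auroux2010}) to match the holomorphic-triangle map $m$ on $\AT$ with multiplication in $\mathcal{A}(\mathcal{Z})$. If you only produce \emph{some} bimodule isomorphism via invariance, you still owe a verification that it agrees with the one under which Auroux's triangle count becomes the product. Also note that the proposition asserts an \emph{isomorphism}, not a homotopy equivalence, so your third step must genuinely rest on a diffeomorphism of bordered diagrams (which the paper indeed asserts just before the proposition) and not on Heegaard moves, which in general only give homotopy equivalences.
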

	\begin{proof}[Sketch]
		We identify the generating set $\mathfrak{S}(\AZ_{\eta\theta})$ with the usual basis for $\mathcal{A}(\mathcal{Z})$ in terms of strand diagrams. A generator $\bm{x}\in\mathfrak{S}(\AZ_{\eta\theta})$ is a set of points in $\bm{\eta}\cap\bm{\theta}$. To a single intersection point $x\in\bm{\eta}\cap\bm{\theta}$, we associate a Reeb chord or smeared horizontal strand in $\mathcal{Z}=(Z,\bm{a},M)$ as follows. First, draw $\mathcal{Z}$ above the square, oriented from left to right, with the set of points $\bm{a}$ identified with the boundary intersection points of $\bm{\eta}$ and $\bm{\theta}$. Next, note that there are unique segments $e$ and $g$ in the square passing through $x$ and there is an unique triangular (or empty) region $T_x$ of $\square_k$ bounded by the segments $e$ and $g$ and the line $y=4k+1$. If $T_x$ is empty, then $x$ is a boundary intersection point and we associate to it the smeared horizontal strand given by the matching $M$. Otherwise, we associate to $x$ the Reeb chord $\rho_x$ in $\mathcal{Z}$ determined by the line segment $T_x\cap\{y=4k+1\}$. A generator $\bm{x}\in\mathfrak{S}(\AZ_{\eta\theta})$ may therefore be identified with a set of Reeb chords and smeared horizontal strands and, hence, with a strand diagram. It is straightforward to see that this identification gives a bijection between $\mathfrak{S}(\AZ_{\eta\theta})$ and the usual basis for $\mathcal{A}(\mathcal{Z})$. Note also that we may identify the left- and right-idempotents of a generator $\bm{x}$ with the collections of left- and right-endpoints of the segments $T_x\cap\{y=4k+1\}$, respectively. The identification we have given here is equivalent to the one given in \cite{Lipshitz_2011}. To recover theirs from ours, note that if $T_x$ is nonempty, then there is an unique rectangular domain $R_x$ in $\AZ_{\eta\theta}$ bounded by the leftmost segment of $\partial\square_k$, $\bm{\eta}$, and $T_x\cap\bm{\theta}$ with vertices at $x$ and the topmost endpoint of $T_x\cap\bm{\theta}$. Drawing $\mathcal{Z}$ oriented downward and to the left of $\AZ_{\eta\theta}$ so that $\bm{a}$ is identified with $\bm{\eta}\cap\partial\AZ_{\eta\theta}$, one can verify readily that the Reeb chord in $\mathcal{Z}$ determined by $R_x\cap\partial\AZ_{\eta\theta}$ is precisely $\rho_x$. Lastly, the diagram $\AZ_{\eta\theta}$ is \emph{nice} in the sense of \cite{Sarkar2010} so the differential on $\widehat{\CFAA}(\AZ_{\eta\theta})$ counts only embedded rectangles, the only nontrivial $A_\infty$-operations are the $m_2$ maps, and these operations count half-strips --- i.e. bigons asymptotic to Reeb chords at the boundary. It is then straightforward to identify the differential and bimodule structures on $\widehat{\CFAA}(\AZ_{\eta\theta})$ with those on $\mathcal{A}(\mathcal{Z})$.
		\begin{figure}
			\begin{center}
				\includegraphics[scale=1]{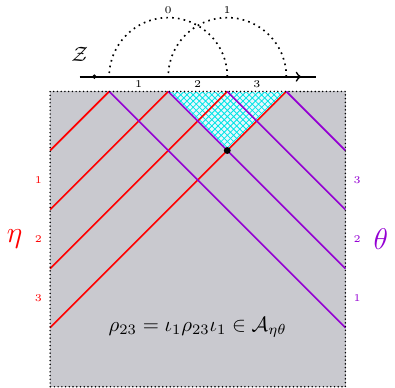}
			\end{center}
			\caption{Identifying a generator $\bm{x}\in\mathfrak{S}(\AZ_{\eta\theta})$ with the algebra element $\rho_{\scriptscriptstyle23}\in\mathcal{A}(\mathcal{Z})$. In $\mathcal{A}(-\mathcal{Z})$, this same generator is identified with $\rho_{\scriptscriptstyle12}$.}
		\end{figure}
	\end{proof}
	\begin{remark}
		One way to think about the module actions on $\widehat{\CFAA}(\AZ_{\eta\theta})$ is as follows. Suppose $\bm{x}$ and $\bm{y}$ are generators such that the collection of right-endpoints of the segments $T_{x_i}\cap\{y=4k+1\}$ for $x_i\in\bm{x}=\{x_1,\dots,x_k\}$ coincides with the collection of left-endpoints of the segments $T_{y_j}\cap\{y=4k+1\}$ for $y_j\in\bm{y}=\{y_1,\dots,y_k\}$. In this case, there is a bijection $f:[k]\to[k]$ with the property that $T_{x_i}\cap T_{y_{f(j)}}$ is precisely the common vertex of the triangles $T_{x_i}$ and $T_{y_{f(i)}}$ when $i=j$ and empty otherwise. Note that there is an unique (possibly empty) rectangular region $R_i$ with the property that $T_{z_i}:=T_{x_i}\cup T_{y_{f(i)}}\cup R_i$ is again a triangle. The product $\bm{x}\cdot\bm{y}$ is then precisely the collection of intersection points $\bm{z}=\{z_1,\dots,z_k\}$. One may verify that this coincides with the usual algebra structure on $\mathcal{A}(\mathcal{Z})$ under the above identification and with the left- and right-module structures under the identification from \cite{Lipshitz_2011}.
	\end{remark}
	We now define the map $m:\mathcal{A}_{\delta\varepsilon}\otimes_\F\mathcal{A}_{\gamma\delta}\to\mathcal{A}_{\gamma\varepsilon}$.  Let $\triangle$ be a triangle with edges $e_\gamma$, $e_\delta$, and $e_\varepsilon$, ordered clockwise, and let $e_{\eta\theta}$ be the unique point in $e_\eta\cap e_\theta$. Now let $W=\mathrm{int}(\AT)\times\triangle$ and fix generators $\rho\in\mathfrak{S}(\AZ_{\delta\varepsilon})$, $\sigma\in\mathfrak{S}(\AZ_{\gamma\delta})$, and $\tau\in\mathfrak{S}(\AZ_{\gamma\varepsilon})$. Denote by $\pi_2(\rho,\sigma,\tau)$ the collection of all homology classes of maps $(S,\partial S)\to(W,\bm{\gamma}\times e_\gamma\cup\bm{\delta}\times e_\delta\cup\bm{\varepsilon}\times e_\varepsilon)$, where $S$ is a Riemann surface with boundary and boundary marked points $s_{\gamma\delta}$, $s_{\delta\varepsilon}$, and $s_{\varepsilon\gamma}$ such that $s_{\gamma\delta}\mapsto\rho$, $s_{\delta\varepsilon}\mapsto\sigma$, and $s_{\varepsilon\gamma}\mapsto\tau$. As in Section 10 of \cite{Lipshitz2006}, one may pick a sufficiently nice almost complex structure $J$ on $W$ so that, for each $A\in\pi_2(\rho,\sigma,\tau)$, the moduli space $\mathcal{M}^A(\rho,\sigma,\tau)$ of embedded $J$-holomorphic curves $(S,\partial S)\stackrel{u}{\to}(W,\bm{\gamma}\times e_\gamma\cup\bm{\delta}\times e_\delta\cup\bm{\varepsilon}\times e_\varepsilon)$ in the homology class $A$ such that $u(s_{\gamma\delta})=\rho$, $u(s_{\delta\varepsilon})=\sigma$, and $u(s_{\varepsilon\gamma})=\tau$ is a smooth manifold whose dimension is given by the Maslov index $\mathrm{ind}(A)$ of $A$. We then define $m$ on generators by
	\begin{align}
		m(\rho\otimes\sigma)=\sum_{\tau\in\mathfrak{S}(\AZ_{\gamma\varepsilon})}\sum_{\mathrm{ind}(A)=0}\#\mathcal{M}^A(\rho,\sigma,\tau)\tau.
	\end{align}
	\begin{proposition}[{\cite[Proposition 4.8]{Auroux2010}}]
		The map $m:\mathcal{A}_{\delta\varepsilon}\otimes_\F\mathcal{A}_{\gamma\delta}\to\mathcal{A}_{\gamma\varepsilon}$ coincides with the multiplication map under the identification of each $\mathcal{A}_{\eta\theta}$ with $\mathcal{A}(\mathcal{Z})$.
	\end{proposition}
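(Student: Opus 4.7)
The plan is to enumerate the rigid pseudoholomorphic triangles in $\AT(\mathcal{Z})$ explicitly, in parallel with the sketch of the preceding proposition. Since $\bm{\gamma}$, $\bm{\delta}$, and $\bm{\varepsilon}$ are small Hamiltonian perturbations of three families of parallel line segments of distinct slopes $\tan(\pi/6)$, $\pi/2$, and $\tan(5\pi/6)$ in the planar model $\square_k$, the diagram $\AT(\mathcal{Z})$ is ``nice for triangles'' in the sense that any positive domain of Maslov index zero with positive multiplicities and multiplicity zero at the basepoints decomposes as a disjoint union of small triangular regions, each of which lifts to an honest planar triangle in $\square_k$ bounded by one $\gamma$-, one $\delta$-, and one $\varepsilon$-segment. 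A Maslov-index computation analogous to the one carried out for $\AZ$ in \cite{Auroux2010}, together with the Riemann mapping theorem, shows that each such small triangle has a unique (up to reparametrization) holomorphic representative and is therefore counted with multiplicity one.

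The next step is to match these small triangles with multiplication of strand diagrams. Fix a small triangle $T$ with corners $\sigma_T\in\bm{\gamma}\cap\bm{\delta}$, $\rho_T\in\bm{\delta}\cap\bm{\varepsilon}$, and $\tau_T\in\bm{\gamma}\cap\bm{\varepsilon}$. Under the identification of the preceding sketch, each corner $x\in\bm{\eta}\cap\bm{\theta}$ is labeled by the Reeb chord $\rho_x$ that the segment $T_x\cap\{y=4k+1\}$ cuts out of $\mathcal{Z}$. A direct inspection of the three slopes shows that projecting $T$ vertically to $\{y=4k+1\}$ exhibits $\rho_{\tau_T}$ as the end-to-end concatenation of $\rho_{\sigma_T}$ and $\rho_{\rho_T}$, with the right endpoint of one meeting the left endpoint of the other. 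Summing over tuples of compatible small triangles making up a rigid $A\in\pi_2(\rho,\sigma,\tau)$ and translating this concatenation into strand diagrams then yields $\tau=\sigma\cdot\rho$ in $\mathcal{A}(\mathcal{Z})$ whenever $\mathcal{M}^A(\rho,\sigma,\tau)$ contributes.

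Finally, I would compare this with the opposite multiplication map. Writing $a$ and $b$ for the algebra elements associated to $\sigma\in\mathfrak{S}(\AZ_{\gamma\delta})$ and $\rho\in\mathfrak{S}(\AZ_{\delta\varepsilon})$, the previous step gives $m(\rho\otimes\sigma)=\sigma\cdot\rho=a\cdot b$, which is precisely $b\cdot^{\mathrm{op}} a$, as desired. The main obstacle is bookkeeping: the clockwise ordering $(e_\gamma,e_\delta,e_\varepsilon)$ of $\partial\triangle$ together with the placement and orientation of $\mathcal{Z}$ along the top edge $\{y=4k+1\}$ of $\square_k$ must be tracked carefully, since these conventions together determine whether the planar concatenation reads as $a\cdot b$ or $b\cdot a$. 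This is exactly where the word ``opposite'' enters and distinguishes the present statement from the formulation of \cite[Proposition 4.8]{Auroux2010} quoted just before the proposition; once the conventions are pinned down the small-triangle enumeration of the first two steps finishes the argument.
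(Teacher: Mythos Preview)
The paper does not supply its own proof of this proposition; it is quoted verbatim from \cite[Proposition~4.8]{Auroux2010} and used as a black box. The only related argument the paper carries out is Lemma~\ref{TriangleLemma}, which uses an Euler-measure computation in the spirit of Auroux to show that rigid triangles in $\AT$ with the relevant cyclic corner ordering never touch $\partial\AT$, so that the provincial triangle count defining $m$ agrees with the full triangle count.

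Your sketch is essentially Auroux's original argument, and it is the right shape. A few remarks on where the weight lies. The assertion that every index-zero positive domain avoiding the basepoint region is a disjoint union of small embedded triangles is the crux, and ``nice for triangles'' is not a standard notion you can invoke; it must be proved. Auroux does this by the Euler-measure count you allude to (and which the paper reproduces, with a slight twist, in the proof of Lemma~\ref{TriangleLemma}): with the flat metric on $\square_k$ and the $60^\circ$/$120^\circ$ intersection angles, one checks that each interior corner contributes $+\tfrac{1}{12}$ to $e(B)$, hence $e(B)=\tfrac{g}{4}$, and then the index formula forces $\chi(S)=g$, so $S$ is $g$ disjoint disks. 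Your proposal gestures at this but should state it as the actual computation rather than an analogy.

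Second, the identification of the triangle output with concatenation of Reeb chords is correct in spirit, but your phrasing ``projecting $T$ vertically to $\{y=4k+1\}$'' is not literally how the matching works: the triangles $T_x$ attached to corners in the earlier sketch are the objects whose top edges are the Reeb chords, and one must argue that the small holomorphic triangle forces $T_{\tau_T}=T_{\sigma_T}\cup T_{\rho_T}\cup R$ for the unique interpolating rectangle $R$, exactly as in the Remark following the sketch of the previous proposition. Once that is said, your bookkeeping paragraph about the clockwise ordering of $\partial\triangle$ and the orientation of $\mathcal{Z}$ is on point and is indeed where ``opposite'' enters.
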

	\begin{figure}
		\begin{center}
			\includegraphics[scale=1]{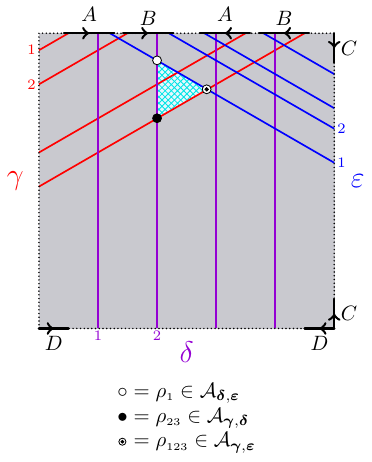}
		\end{center}
		\caption{An embedded holomorphic triangle in $\AT(\mathcal{Z})$ representing the multiplication $m^{\mathrm{op}}(\rho_{\scriptscriptstyle 23}\otimes\rho_{\scriptscriptstyle 1})=\rho_{\scriptscriptstyle 123}$ in the algebra $\mathcal{A}(\mathcal{Z})^{\mathrm{op}}$ or, equivalently, the multiplication $m(\rho_{\scriptscriptstyle12}\otimes\rho_{\scriptscriptstyle3})=\rho_{\scriptscriptstyle123}$ in $\mathcal{A}(-\mathcal{Z})$, where $\mathcal{Z}$ is the genus 1 pointed matched circle.}
	\end{figure}
	As noted in the introduction, we will be working over the algebra $\mathcal{A}(-\mathcal{Z})$. However, it is a standard fact that this algebra is isomorphic to $\mathcal{A}(\mathcal{Z})^{\mathrm{op}}$. Indeed, one can identify the generators $\mathfrak{S}(\AZ_{\eta\theta})$ with the usual generators for $\mathcal{A}(-\mathcal{Z})$ in precisely the same way as we did for $\mathcal{A}(\mathcal{Z})$ with the sole exception that we draw $\mathcal{Z}$ above $\AZ_{\eta\theta}$ oriented from right to left, rather than from left to right.
	\begin{corollary}
		$m$ coincides with the multiplication map $\mathcal{A}_{\gamma\delta}\otimes_\F\mathcal{A}_{\delta\varepsilon}\to\mathcal{A}_{\gamma\varepsilon}$ under the identification of $\mathcal{A}_{\eta\theta}$ with $\mathcal{A}(-\mathcal{Z})$.
	\end{corollary}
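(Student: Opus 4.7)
The plan is to chase the identifications through the standard anti-isomorphism $\mathcal{A}(\mathcal{Z})^{\mathrm{op}}\xrightarrow{\cong}\mathcal{A}(-\mathcal{Z})$. The key conceptual point is that the map $m$ is defined purely geometrically, by counting embedded holomorphic triangles in $W=\mathrm{int}(\AT)\times\triangle$; it does not know about our choice of algebra in which to interpret its inputs and outputs. What changes when we pass from $\mathcal{A}(\mathcal{Z})$ to $\mathcal{A}(-\mathcal{Z})$ is only the dictionary between the generating set $\mathfrak{S}(\AZ_{\eta\theta})$ and strand diagrams, and this is precisely the passage from drawing $\mathcal{Z}$ above $\AZ_{\eta\theta}$ oriented left-to-right to drawing it oriented right-to-left, as noted in the paragraph preceding the corollary.

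First I would make this anti-isomorphism explicit: reversing the orientation of $\mathcal{Z}$ sends a strand diagram to its mirror image, so if $\bm{x}\in\mathfrak{S}(\AZ_{\eta\theta})$ is identified with an element $a\in\mathcal{A}(\mathcal{Z})$ under the left-to-right convention, then $\bm{x}$ is identified with the image of $a$ under the anti-isomorphism $\phi:\mathcal{A}(\mathcal{Z})^{\mathrm{op}}\to\mathcal{A}(-\mathcal{Z})$ under the right-to-left convention. By construction $\phi$ intertwines multiplication in $\mathcal{A}(\mathcal{Z})^{\mathrm{op}}$ (i.e.\ opposite multiplication in $\mathcal{A}(\mathcal{Z})$) with multiplication in $\mathcal{A}(-\mathcal{Z})$.

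Next I would combine this with the preceding proposition. Writing $\cdot_{\mathcal{Z}}$ and $\cdot_{-\mathcal{Z}}$ for the two multiplications, the proposition asserts that for $\rho\in\mathfrak{S}(\AZ_{\delta\varepsilon})$ and $\sigma\in\mathfrak{S}(\AZ_{\gamma\delta})$ one has $m(\rho\otimes\sigma)=\sigma\cdot_{\mathcal{Z}}\rho$ when both sides are read through the $\mathcal{A}(\mathcal{Z})$ identification. Applying $\phi$ to this equality and using that $\phi(\sigma\cdot_{\mathcal{Z}}\rho)=\phi(\rho)\cdot_{-\mathcal{Z}}\phi(\sigma)$, we conclude $m(\rho\otimes\sigma)=\rho\cdot_{-\mathcal{Z}}\sigma$ when everything is read through the $\mathcal{A}(-\mathcal{Z})$ identification. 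This is exactly the claim of the corollary.

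I do not anticipate any serious obstacle here, since the content is entirely a translation between two labelings of the same geometric count; the only care required is in verifying that the orientation-reversal of $\mathcal{Z}$ on the level of generators agrees with the algebra-level anti-isomorphism, and this is essentially tautological from the strand-diagram description given in the proof sketch of the preceding proposition.
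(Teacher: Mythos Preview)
Your proposal is correct and is precisely the argument the paper intends: the corollary is stated without proof, immediately after the paragraph observing that $\mathcal{A}(-\mathcal{Z})\cong\mathcal{A}(\mathcal{Z})^{\mathrm{op}}$ via the right-to-left identification of generators, and your proof simply spells out how to transport Auroux's proposition through this anti-isomorphism. The only cosmetic point is that the corollary writes the domain as $\mathcal{A}_{\gamma\delta}\otimes_\F\mathcal{A}_{\delta\varepsilon}$ while your computation is phrased with the factors in the order $\mathcal{A}_{\delta\varepsilon}\otimes_\F\mathcal{A}_{\gamma\delta}$ inherited from the definition of $m$; this is just a relabeling and does not affect the argument.
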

	\begin{remark}
		By construction, the map $m$ counts only pseudoholomorphic triangles which do not meet the boundary of $\AT$. One could instead count all rigid triangles in $\AT$, in which case one would expect to see additional terms in $m$. However, Lemma \ref{TriangleLemma} below tells us that these maps coincide. See \cite{LOT2} for further details on pseudoholomorphic polygon maps in bordered Floer homology.
	\end{remark}
	\section{Composition and triangle counts}
	\begin{definition}
		We say that $f\in\Mor^{\mathcal{A}}(Y_1,Y_2)$ is a \emph{basic morphism} if there are left-module generators $\bm{u}\in\widehat{\CFD}(Y_1)$ and $\bm{v}\in\widehat{\CFD}(Y_2)$ and an algebra element $\rho\in\mathcal{A}(-\mathcal{Z})$ such that $f(\bm{u})=\rho\bm{v}$ and $f$ vanishes on all other generators.
	\end{definition}
	\begin{lemma}
		The set of basic morphisms forms an $\F$-basis for $\Mor^{\mathcal{A}}(Y_1,Y_2)$.
	\end{lemma}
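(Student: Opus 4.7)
The plan is to exploit the description of $\widehat{\CFD}(Y_i)=\mathcal{A}\otimes_{\mathcal{I}}X(\mathcal{H}_i)$ as an induced module: it is generated as a left $\mathcal{A}$-module by $\mathfrak{S}(\mathcal{H}_i)$, and admits an $\F$-basis given by the elements $\rho\otimes\bm{v}$ with $\bm{v}\in\mathfrak{S}(\mathcal{H}_i)$ and $\rho$ ranging over those standard strand-diagram $\F$-basis elements of $\mathcal{A}(-\mathcal{Z})$ whose right idempotent equals the idempotent of $\bm{v}$. I will interpret ``basic morphism'' in the lemma as requiring $\rho$ to be one of these standard basis elements, which is implicit in the statement.

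First I would observe that any left $\mathcal{A}$-module map $f:\widehat{\CFD}(Y_1)\to\widehat{\CFD}(Y_2)$ is determined by its restriction to $\mathfrak{S}(\mathcal{H}_1)$, since this set generates the domain over $\mathcal{A}$. For each $\bm{u}\in\mathfrak{S}(\mathcal{H}_1)$ I would then expand $f(\bm{u})$ in the $\F$-basis of $\widehat{\CFD}(Y_2)$ just described to obtain a finite sum $f(\bm{u})=\sum_{\bm{v},\rho}c_{\bm{u}}^{\bm{v},\rho}\,\rho\otimes\bm{v}$ with $c_{\bm{u}}^{\bm{v},\rho}\in\F$. Each nonzero coefficient yields the basic morphism $f_{\bm{u},\bm{v},\rho}$ sending $\bm{u}\mapsto\rho\bm{v}$ and vanishing on all other generators; the idempotent compatibility between $\bm{u}$ and the left idempotent of $\rho$ that makes $f_{\bm{u},\bm{v},\rho}$ a well-defined $\mathcal{A}$-linear map is forced by applying $\mathcal{A}$-linearity of $f$ to $\bm{u}$ multiplied by its own idempotent. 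Thus $f$ is a finite $\F$-linear combination of basic morphisms, and the basic morphisms span $\Mor^{\mathcal{A}}(Y_1,Y_2)$.

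For linear independence, suppose $\sum_i c_i f_i=0$ for scalars $c_i\in\F$ and distinct basic morphisms $f_i=f_{\bm{u}_i,\bm{v}_i,\rho_i}$. Evaluating at a fixed $\bm{u}\in\mathfrak{S}(\mathcal{H}_1)$ and using that each $f_i$ vanishes on generators other than $\bm{u}_i$ yields
\[
\sum_{i\,:\,\bm{u}_i=\bm{u}}c_i\,(\rho_i\otimes\bm{v}_i)=0\quad\text{in }\widehat{\CFD}(Y_2).
\]
Since distinct basic morphisms correspond to distinct triples $(\bm{u}_i,\bm{v}_i,\rho_i)$, the summands $\rho_i\otimes\bm{v}_i$ with $\bm{u}_i=\bm{u}$ are pairwise distinct standard basis elements of $\widehat{\CFD}(Y_2)$, so every such $c_i$ vanishes. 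Letting $\bm{u}$ range over $\mathfrak{S}(\mathcal{H}_1)$ gives $c_i=0$ for all $i$.

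The argument is essentially bookkeeping, and I do not expect any genuine obstacle. The one point that warrants care is matching the idempotent conditions in the definition of a basic morphism with those governing when $\rho\otimes\bm{v}$ is a nonzero standard basis element of $\widehat{\CFD}(Y_2)$; once that matching is spelled out, both the spanning and the independence assertions reduce to the standard fact that $\mathcal{A}(-\mathcal{Z})$ has a canonical $\F$-basis by strand diagrams, which is the sole external input needed.
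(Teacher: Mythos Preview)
Your argument is correct and is essentially the same as the paper's: both prove spanning by expanding $f(\bm{u}_j)$ in the standard $\F$-basis of $\widehat{\CFD}(Y_2)$ and reading off basic-morphism summands, and both prove independence by evaluating a dependence relation at each generator $\bm{u}$ and invoking linear independence of the elements $\rho\bm{v}$. Your explicit remark that $\rho$ must be a strand-diagram basis element (and your attention to idempotent compatibility) makes precise something the paper leaves implicit, but there is no substantive difference in strategy.
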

	\begin{proof}
		Let $\bm{u}_1,\dots,\bm{u}_m\in\widehat{\CFD}(Y_1)$ and $\bm{v}_1,\dots,\bm{v}_n\in\widehat{\CFD}(Y_2)$ be the generators for a given choice of bordered Heegaard diagrams for the $Y_i$. For $j=1,\dots,m$, let $f_1^j,\dots,f_{s_j}^j$ be the distinct basic morphisms for which $f_i^j(\bm{u}_j)=\rho_{ij}\bm{v}_{k(i,j)}$ is nonzero. Suppose that there is a linear dependence
		\begin{align}
			\sum_{i,j}c_{ij}f_i^j=0
		\end{align}
		between them. For a given $j$, we then have a linear dependence
		\begin{align}
			\sum_ic_{ij}f_i^j(\bm{u}_j)=\sum_ic_{ij}\rho_{ij}\bm{v}_{k(i,j)}=0
		\end{align}
		but the $\rho_{ij}\bm{v}_{k(i,j)}$ are all distinct, hence $\F$-linearly independent, since the $f_i^j$ are basic and distinct so $c_{ij}=0$ for all $i$ and $j$. Now given $g\in\Mor^{\mathcal{A}}(Y_1,Y_2)$, write
		\begin{align}
			g(\bm{u}_j)=\sum_{i}\sigma_{ij}\bm{v}_i.
		\end{align}
		For each $i$ and $j$ for which $\sigma_{ij}\bm{v}_i\neq 0$, one can then define a basic morphism $g_{i,j}$ by taking $g_{i,j}(\bm{u}_j)=\sigma_{ij}\bm{v}_i$ and $g_{i,j}(\bm{u}_k)=0$ for $k\neq j$. We then have that
		\begin{align}
			g=\sum_{i,j}g_{i,j}
		\end{align}
		by construction so the basic morphisms span $\Mor^{\mathcal{A}}(Y_1,Y_2)$.
	\end{proof}
	The identification
	\begin{align}
		\Mor^{\mathcal{A}}(Y_1,Y_2)\cong\overline{\widehat{\CFD}(Y_1)}\boxtimes\mathcal{A}(-\mathcal{Z})\boxtimes\widehat{\CFD}(Y_2)
	\end{align}
	can then be given in terms of basic morphisms as follows: suppose we have a basic morphism $f:\widehat{\CFD}(Y_1)\to\widehat{\CFD}(Y_2)$ defined by $f(\bm{u})=\rho\bm{v}$, then $f$ is sent under this isomorphism to the tensor product $\overline{\bm{u}}\boxtimes\rho\boxtimes\bm{v}$. If we have a second basic morphism $g:\widehat{\CFD}(Y_2)\to\widehat{\CFD}(Y_3)$ determined by $g(\bm{v})=\sigma\bm{w}$, then the composition $g\circ f$ is given at the level of box tensor products by
	\begin{align}
		(\overline{\bm{v}}\boxtimes\sigma\boxtimes\bm{w})\circ(\overline{\bm{u}}\boxtimes\rho\boxtimes\bm{v})=\overline{\bm{u}}\boxtimes\rho\sigma\boxtimes\bm{w},
	\end{align}
	so we we may realize the composition map $f\otimes g\mapsto g\circ f$ explicitly in terms of the multiplication operation on $\mathcal{A}(\mathcal{Z})$ as:
	\begin{align}
		\begin{split}
			&(\overline{\bm{u}}\boxtimes\rho\boxtimes\bm{v})\otimes(\overline{\bm{v}}\boxtimes\sigma\boxtimes\bm{w})\stackrel{\mathit{ev}}{\mapsto}\overline{\bm{u}}\boxtimes\rho\boxtimes\overline{\bm{v}}(\bm{v})\boxtimes\sigma\boxtimes\bm{w}\\&=\overline{\bm{u}}\boxtimes\rho\boxtimes\iota_{\bm{v}}\boxtimes\sigma\boxtimes\bm{w}\stackrel{\cong}{\mapsto}\overline{\bm{u}}\boxtimes\rho\boxtimes\sigma\boxtimes\bm{w}\stackrel{m}{\mapsto}\overline{\bm{u}}\boxtimes\rho\sigma\boxtimes\bm{w},
		\end{split}
	\end{align}
	where $\mathit{ev}:\widehat{\CFD}(Y_2)\otimes_\F\overline{\widehat{\CFD}(Y_2)}\to\mathcal{A}$ is the evaluation map $\bm{x}\otimes h\mapsto h(\bm{x})$ and the map preceding $m^{\mathcal{A}}$ is given by the isomorphism $\mathcal{A}\boxtimes\mathcal{I}\boxtimes\mathcal{A}\cong\mathcal{A}\boxtimes\mathcal{A}$. Note that this penultimate step is possible because $\bm{v}$ is a generator and the restriction of the evaluation map to the $\F$-vector subspace of $\widehat{\CFD}(Y_2)\otimes_\F\overline{\widehat{\CFD}(Y_2)}$ spanned by elements of the form $\bm{v}\otimes\overline{\bm{v}}$, where $\bm{v}$ is a generator as above, takes values in the subring $\mathcal{I}$ of idempotents of $\mathcal{A}(\mathcal{Z})$.
	\subsection{Small perturbations}
	In this subsection, we show that a small perturbation of the $\beta$-circles of a bordered Heegaard diagram $\mathcal{H}=(\overline{\Sigma},\bm{\alpha},\bm{\beta},z)$ induces an isomorphism of type-$D$ modules. Let $(\overline{\Sigma},\bm{\alpha},\bm{\beta},\bm{\gamma},z)$ be a provincially admissible bordered Heegaard triple with one boundary component such that $\bm{\beta}$ and $\bm{\gamma}$ consist entirely of circles. Then $(\overline{\Sigma},\bm{\beta},\bm{\gamma},z)$ is an admissible balanced sutured Heegaard diagram for the sutured 3-manifold $Y_{\beta\gamma}\smallsetminus B^3$ with a single boundary suture. The corresponding sutured Floer complex $\mathit{SFC}(\overline{\Sigma},\bm{\beta},\bm{\gamma},z)$ is isomorphic to the ordinary Heegaard Floer complex $\widehat{\CF}(Y_{\beta\gamma})$ (cf. \cite[Proposition 9.1]{Juhasz2006}). We may then define a type-$D$ morphism
	\begin{align*}
		\widehat{f}_{\alpha\beta\gamma}:\widehat{\CFD}(Y_{\alpha\beta})\otimes\widehat{\CF}(Y_{\beta\gamma})\to\mathcal{A}\otimes\widehat{\CFD}(Y_{\alpha\gamma})
	\end{align*}
	by
	\begin{align}
		\widehat{f}_{\alpha\beta\gamma}(\bm{x}\otimes\bm{y})=\sum_{\bm{w}\in\mathfrak{S}(\bm{\alpha},\bm{\gamma})}\sum_{B\in\pi_2(\bm{x},\bm{y},\bm{w})}a_{\bm{x},\bm{y},\bm{w}}^B\otimes\bm{w},
	\end{align}
	where
	\begin{align}
		a_{\bm{x},\bm{y},\bm{w}}^B=\sum_{\vec{\rho}\,|\,\mathrm{ind}(B,\rho)=0}\#\mathcal{M}^B(\bm{x},\bm{y},\bm{w};\vec{\rho})a(-\vec{\rho}).
	\end{align}
	Here, $\pi_2(\bm{x},\bm{y},\bm{w})$ is the space of homology classes of Whitney triangles connecting $\bm{x}$, $\bm{y}$, and $\bm{w}$, $\mathcal{M}^B(\bm{x},\bm{y},\bm{z};\vec{\rho})$ is the moduli space of pseudoholomorphic representatives of $B$ with asymptotic condition $\vec{\rho}$ at east infinity, and $a(-\vec{\rho})$ is defined as before. The fact that $\widehat{f}_{\alpha\beta\gamma}$ is a morphism of type-$D$ structures follows from a straightforward variation on the usual proof that $\partial^2=0$ for $\widehat{\CFD}(Y)$. Alternatively, it is a special case of \cite[Proposition 4.29]{LOT2}.
	
	For $\bm{\beta}^1$ a small Hamiltonian perturbation of $\bm{\beta}^0$, we will show that the map $\widehat{f}_{\alpha\beta^0\beta^1}$ induces an isomorphism $\widehat{\CFD}(Y_{\alpha\beta^0})\to\widehat{\CFD}(Y_{\alpha\beta^1})$. We recall the following standard lemma \cite[Lemma 9.10]{OzsSz2004}.
	\begin{lemma}\label{FiltrationLemma}
		Let $F:A\to B$ be a map of $\R$-filtered groups admitting a decomposition $F=F_0+\ell$ where $F_0$ is a filtration-preserving isomorphism and $\ell(\bm{x})<F_0(\bm{x})$ for all generators $\bm{x}$. Then, if the filtration on $B$ is bounded below, $F$ is an isomorphism.
	\end{lemma}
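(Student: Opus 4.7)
The plan is to factor $F=F_0\circ(\id_A+L)$ where $L:=F_0^{-1}\circ\ell:A\to A$, and then invert $\id_A+L$ by a formal geometric series. Because $F_0$ is a filtration-preserving isomorphism, its inverse is also filtration-preserving, so $L$ still strictly lowers filtration on each generator: $L(\bm{x})$ is a sum of generators whose filtrations are strictly less than that of $\bm{x}$. It therefore suffices to show that $\id_A+L$ is an isomorphism of $A$.

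My candidate inverse is $G:=\sum_{k\geq 0}(-L)^k$. The first main point is to argue that this sum is pointwise well-defined, i.e.\ that for every generator $\bm{x}$ one has $L^k(\bm{x})=0$ for all $k$ sufficiently large. Iterating $L$ from $\bm{x}$ produces a tree of generators whose filtration strictly decreases at every step. Since the filtration on $B$ is bounded below, so is the filtration on $A$ via the filtration-preserving isomorphism $F_0^{-1}$; and in the bordered Heegaard Floer setting at hand there are only finitely many generators in any bounded filtration range. Together these force the tree of iterated $L$-descendants of $\bm{x}$ to terminate after finitely many levels, making $G(\bm{x})$ a finite sum and giving a well-defined $\Bbbk$-module map $G:A\to A$.

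With convergence in hand, a direct telescoping computation shows
\begin{align*}
(\id_A+L)\circ G=\sum_{k\geq 0}(-L)^k+\sum_{k\geq 0}(-L)^{k+1}L\cdot(-1)=\id_A,
\end{align*}
and similarly on the other side, so $G$ is a two-sided inverse of $\id_A+L$; hence $F^{-1}=G\circ F_0^{-1}$ is the desired inverse of $F$.

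The main obstacle is the convergence of $G$; the algebraic check in the last step is routine. In a purely $\R$-filtered setting strict filtration decrease alone is not sufficient, so the bounded-below hypothesis must be combined with local finiteness of the generating set in any bounded filtration window — which is automatic for the complexes under consideration — to ensure that the iterates $L^k(\bm{x})$ actually vanish rather than merely descend indefinitely in filtration.
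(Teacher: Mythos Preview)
The paper does not actually prove this lemma: it simply recalls it as the standard \cite[Lemma~9.10]{OzsSz2004}. Your argument is the usual one behind that citation --- factor through $F_0$ and invert the unipotent part by a geometric series --- so in that sense you are supplying exactly the proof the paper omits.

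Your write-up is essentially correct, with two small remarks. First, the displayed telescoping line is garbled: the term $\sum_{k\geq 0}(-L)^{k+1}L\cdot(-1)$ is not what $L\circ G$ equals. The clean computation is $L\circ G=\sum_{k\geq 0}L(-L)^k=-\sum_{k\geq 0}(-L)^{k+1}$, and then $(\id_A+L)\circ G=\sum_{k\geq 0}(-L)^k-\sum_{k\geq 0}(-L)^{k+1}=\id_A$. Second, you are right to flag that for a general $\mathbb{R}$-filtration, ``strictly lowers filtration'' together with ``bounded below'' does not force $L$ to be locally nilpotent; one needs that only finitely many filtration values of generators lie in any bounded window. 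This hypothesis is implicit in the Ozsv\'ath--Szab\'o setting (finitely many generators) and in the paper's applications, so your caveat is exactly the right one to make.
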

	We recall here the definition of the energy filtration on $\widehat{\CFD}(\overline{\Sigma},\bm{\alpha},\bm{\beta},z)$ from \cite[Chapter 6]{Lipshitz2018}, assuming that $(\overline{\Sigma},\bm{\alpha},\bm{\beta},z)$ is admissible. Choose an area form on $\overline{\Sigma}$. Given a $\mathit{Spin}^c$-structure $\mathfrak{s}$ on $Y$, define $\mathcal{F}:\mathfrak{S}(\overline{\Sigma},\bm{\alpha},\bm{\beta},\mathfrak{s})\to\R$ as follows: choose any generator $\bm{x}_0\in\mathfrak{S}(\overline{\Sigma},\bm{\alpha},\bm{\beta},\mathfrak{s})$ and set $\mathcal{F}(\bm{x}_0)=0$. For any other generator $\bm{x}\in\mathfrak{S}(\overline{\Sigma},\bm{\alpha},\bm{\beta},\mathfrak{s})$, choose $A_{\bm{x}_0,\bm{x}}\in\pi_2(\bm{x}_0,\bm{x})$ and let
	\begin{align}
		\mathcal{F}(\bm{x})=-\mathit{Area}(A_{\bm{x}_0,\bm{x}}).
	\end{align}
	This definition is independent of the choice of $A_{\bm{x}_0,\bm{x}}$ since $(\overline{\Sigma},\bm{\alpha},\bm{\beta},z)$ is admissible. For an algebra element $a\in\mathcal{A}$ such that $a\bm{x}\neq 0$, define $\mathcal{F}(a\bm{x})=\mathcal{F}(\bm{x})$. Then $\mathcal{F}$ induces a filtration on $\widehat{\CFD}(\overline{\Sigma},\bm{\alpha},\bm{\beta},z)$.
	
	Let $\mathcal{H}_{\alpha\beta}=(\overline{\Sigma}_g,\bm{\alpha},\bm{\beta}^0,z)$ be an admissible genus $g$ bordered Heegaard diagram. Provided $\bm{\beta}^1$ is a sufficiently small perturbation of $\bm{\beta}^0$, we may identify $\bm{x}\in\mathfrak{S}(\overline{\Sigma},\bm{\alpha},\bm{\beta}^0,\mathfrak{s})$ with its ``nearest neighbor'' $\bm{x}^1\in\mathfrak{S}(\overline{\Sigma},\bm{\alpha},\bm{\beta}^1,\mathfrak{s})$. This identification extends to a vector space isomorphism $\widehat{\CFD}(\overline{\Sigma},\bm{\alpha},\bm{\beta}^0,z)\to\widehat{\CFD}(\overline{\Sigma},\bm{\alpha},\bm{\beta}^1,z)$ --- which then extends automatically to an isomorphism $\Psi_{0\to 1}$ of type-$D$ structures.
	
	Note that if $\bm{\beta}^1$ is a small perturbation of $\bm{\beta}^0$ as above, then the homology of the complex $\widehat{\CF}(\mathcal{H}_{\beta^0\beta^1})$ associated to the diagram $\mathcal{H}_{\beta^0\beta^1}=(\overline{\Sigma}_g,\bm{\beta}^0,\bm{\beta}^1,z)$ is given by $\widehat{\HF}(\#^gS^2\times S^1)$ since $\mathcal{H}_{\beta^0\beta^1}$ is an admissible balanced sutured Heegaard diagram for $\#^gS^2\times S^1\smallsetminus B^3$.
	\begin{lemma}\label{PerturbationLemma}
		Let $\Theta^{\mathrm{top}}_{\beta^0\beta^1}$ denote the canonical top-dimensional homology class in $\widehat{\HF}(\#^gS^2\times S^1)$. Then the map $\widehat{F}_{\alpha\beta^0\beta^1}^{\mathrm{top}}:\widehat{\CFD}(\mathcal{H}_{\alpha\beta^0})\to\mathcal{A}\otimes\widehat{\CFD}(\mathcal{H}_{\alpha\beta^1})$ given by
		\begin{align}
			\bm{x}\mapsto\widehat{f}_{\alpha\beta^0\beta^1}(\bm{x}\otimes \Theta^{\mathrm{top}}_{\beta^0\beta^1})
		\end{align}
		is an isomorphism of type-$D$ structures. Moreover, this map is homotopic to the nearest point map.
	\end{lemma}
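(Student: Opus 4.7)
The plan is to apply Lemma \ref{FiltrationLemma} to $\widehat{F}^{\mathrm{top}}_{\alpha\beta^0\beta^1}$ with respect to the energy filtration $\mathcal{F}$, decomposing it as $F_0+\ell$ where $F_0$ is a filtration-preserving isomorphism and $\ell$ strictly decreases filtration. This is a bordered adaptation of the closed-case argument of Ozsv\'{a}th--Szab\'{o} (\cite[Lemma 9.10]{OzsSz2004}).

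First I would identify the small-triangle contributions. For each generator $\bm{x}\in\mathfrak{S}(\mathcal{H}_{\alpha\beta^0})$ there is a canonical class $B_{\bm{x}}\in\pi_2(\bm{x},\Theta^{\mathrm{top}}_{\beta^0\beta^1},\bm{x}^1)$ supported in a thin neighborhood of the union of intersection points $\bm{\beta}^0\cap\bm{\beta}^1$, hence away from $\partial\overline{\Sigma}$, with area $O(\epsilon)$ where $\epsilon$ is the $C^0$-size of the perturbation $\bm{\beta}^0\rightsquigarrow\bm{\beta}^1$. Its asymptotic Reeb sequence $\vec{\rho}$ is empty, its Maslov index is zero, and the moduli space $\mathcal{M}^{B_{\bm{x}}}$ consists of a single rigid triangle. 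Summing these contributions across the components of $\Theta^{\mathrm{top}}_{\beta^0\beta^1}$ paired with $\bm{x}$ recovers the generator $\bm{x}^1$, so the small-triangle piece $F_0$ coincides with the nearest point map $\Psi_{0\to 1}$, which is a filtration-preserving isomorphism.

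Next I would bound below the area of non-small triangles. Since $\mathcal{H}_{\alpha\beta^0\beta^1}$ is provincially admissible and the periodic domains on $\mathcal{H}_{\beta^0\beta^1}$ alone can be normalized to have arbitrarily small area, any other homology class $B\in\pi_2(\bm{x},\Theta^{\mathrm{top}}_{\beta^0\beta^1},\bm{w})$ admitting a pseudoholomorphic representative satisfies $\mathrm{Area}(B)\geq\delta$ for some $\delta>0$ independent of $\epsilon$. Taking $\epsilon\ll\delta$ and normalizing so that $\mathcal{F}(\Theta^{\mathrm{top}}_{\beta^0\beta^1})=0$, the correction $\ell=\widehat{F}^{\mathrm{top}}_{\alpha\beta^0\beta^1}-\Psi_{0\to 1}$ sends $\bm{x}$ to an element of filtration at most $\mathcal{F}(\bm{x})-\delta<\mathcal{F}(\bm{x})$. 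Since $\mathcal{F}$ is bounded below within each $\mathrm{Spin}^c$-structure, Lemma \ref{FiltrationLemma} then implies that $\widehat{F}^{\mathrm{top}}_{\alpha\beta^0\beta^1}$ is an isomorphism of type-$D$ structures.

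For the homotopy claim, I would construct a chain homotopy between $\widehat{F}^{\mathrm{top}}_{\alpha\beta^0\beta^1}$ and $\Psi_{0\to 1}$ by introducing an auxiliary small perturbation $\bm{\beta}^2$ and counting rigid rectangles in $(\overline{\Sigma},\bm{\alpha},\bm{\beta}^0,\bm{\beta}^1,\bm{\beta}^2,z)$; via the bordered associativity relation of \cite[Proposition 4.29]{LOT2}, combined with the analogous isomorphism statement for $\bm{\beta}^1\rightsquigarrow\bm{\beta}^2$, this rectangle count produces the required homotopy. The hard part will be confining the small triangles to a purely interior neighborhood of $\bm{\beta}^0\cap\bm{\beta}^1$ so that they contribute no Reeb chord asymptotics, which requires a careful analysis of nearest-neighbor intersections near $\partial\overline{\Sigma}$, especially along the $\bm{\alpha}$-arcs, together with the usual Gromov-compactness argument underpinning the area lower bound for non-small triangles.
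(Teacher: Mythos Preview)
Your argument for the isomorphism statement is essentially the paper's: identify the small-triangle contribution as the nearest-point map $\Psi_{0\to 1}$, show the remainder strictly lowers the energy filtration, and apply Lemma~\ref{FiltrationLemma}. The paper phrases the filtration shift slightly differently---defining a shifted filtration $\mathcal{F}^1$ on the target by subtracting $\mathit{Area}(T_{\bm{x}_0})$ rather than normalizing $\mathcal{F}(\Theta^{\mathrm{top}})=0$---but the content is the same.

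Two remarks. First, your anticipated ``hard part'' is not hard: by hypothesis $\bm{\beta}^0$ and $\bm{\beta}^1$ consist entirely of circles in the interior of $\overline{\Sigma}$, so the canonical small triangles $T_{\bm{x}}$ are supported away from $\partial\overline{\Sigma}$ automatically and carry no Reeb-chord asymptotics. There is nothing delicate to analyze along the $\bm{\alpha}$-arcs.

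Second, your proposed proof of the homotopy claim has a gap. The quadrilateral associativity relation you invoke furnishes a homotopy
\[
\widehat{F}^{\mathrm{top}}_{\alpha\beta^1\beta^2}\circ\widehat{F}^{\mathrm{top}}_{\alpha\beta^0\beta^1}\ \simeq\ \widehat{F}^{\mathrm{top}}_{\alpha\beta^0\beta^2},
\]
i.e.\ it shows that the triangle maps compose correctly. It does not, without some further input, produce a homotopy between $\widehat{F}^{\mathrm{top}}_{\alpha\beta^0\beta^1}$ and $\Psi_{0\to 1}$: knowing that two families of isomorphisms are each closed under composition says nothing about whether the individual members agree. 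The paper instead appeals to a neck-stretching argument adapted from \cite[Lemma~5.4]{Guth2022} (cf.\ also \cite[Proposition~11.4]{Lipshitz2006}), in which one degenerates the triangle so that the only surviving rigid contributions are the small triangles $T_{\bm{x}}$; the parametrized moduli space over the degeneration then furnishes the homotopy to $\Psi_{0\to 1}$ directly.
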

	\begin{proof}
		Let $T_{\bm{x}}\in\pi_2(\bm{x},\Theta^{\mathrm{top}}_{\beta^0\beta^1},\bm{x}^1)$ be the canonical smallest triangle, which has an unique holomorphic representative by the Riemann mapping theorem. Provided our perturbation is small enough, we may assume that the area of $T_{\bm{x}}$ is smaller than the areas of all classes in $\pi_2(\bm{x},\bm{y})$ for any generators $\bm{x}$ and $\bm{y}$ in either $\mathfrak{S}(\overline{\Sigma},\bm{\alpha},\bm{\beta}^0,\mathfrak{s})$ or $\mathfrak{S}(\overline{\Sigma},\bm{\alpha},\bm{\beta}^1,\mathfrak{s})$. Moreover, we may choose the area form so that $T_{\bm{x}}$ is the unique triangle of minimal area connecting $\bm{x}$, $\bm{y}$, and $\Theta^{\mathrm{top}}_{\beta^0\beta^1}$ among all $\bm{y}\in\mathfrak{S}(\overline{\Sigma},\bm{\alpha},\bm{\beta}^1)$. Let $\mathcal{F}_0^1$ be the filtration on $\widehat{\CFD}(\overline{\Sigma},\bm{\alpha},\bm{\beta}^1,z)$ defined as above. Define a new filtration $\mathcal{F}^1$ on $\widehat{\CFD}(\overline{\Sigma},\bm{\alpha},\bm{\beta}^1,z)$ by taking $\mathcal{F}^1(\bm{x}^1)=\mathcal{F}_0^1(\bm{x}^1)-\mathit{Area}(T_{\bm{x}_0})$. As in \cite[Proposition 6.41]{Lipshitz2018}, the map $\widehat{F}_{\alpha\beta^0\beta^1}^{\mathrm{top}}$ is filtered with respect to $\mathcal{F}$ and $\mathcal{F}^1$ and the filtration-preserving part of $\widehat{F}_{\alpha\beta^0\beta^1}^{\mathrm{top}}$ is given by $\Psi_{0\to 1}$. Note that we may promote $\widehat{F}_{\alpha\beta^0\beta^1}^{\mathrm{top}}$ and $\Psi_{0\to 1}$ to maps $\mathcal{A}\otimes\widehat{\CFD}(\overline{\Sigma},\bm{\alpha},\bm{\beta}^0,z)\to\mathcal{A}\otimes\widehat{\CFD}(\overline{\Sigma},\bm{\alpha},\bm{\beta}^1,z)$ of differential left $\mathcal{A}$-modules by taking $\widehat{F}_{\alpha\beta^0\beta^1}^{\mathrm{top}}(a\otimes\bm{x})=a\widehat{F}_{\alpha\beta^0\beta^1}^{\mathrm{top}}(\bm{x})$ and similarly for $\Psi_{0\to 1}$. Since $\Psi_{0\to 1}$ is a vector space isomorphism, it follows from Lemma \ref{FiltrationLemma} that $\widehat{F}_{\alpha\beta^0\beta^1}^{\mathrm{top}}$ is an isomorphism of  differential left $\mathcal{A}$-modules and hence of type-$D$ structures. One can easily adapt the argument given in \cite[Lemma 5.4]{Guth2022} to show that $\widehat{F}_{\alpha\beta^0\beta^1}^{\mathrm{top}}$ is homotopic to the nearest point map (cf. also \cite[Proposition 11.4]{Lipshitz2006}).
	\end{proof}
	We now recall a few definitions and results about holomorphic polygons with Reeb chord asymptotics. Denote by $D_n$ an $n$-gon, i.e. a disk with $n$ labeled punctures on its boundary. Label the boundary arcs clockwise as $e_0,\dots,e_{n-1}$ and let $p_{i,i+1}$ be the puncture between $e_i$ and $e_{i+1}$. Define $\mathrm{Conf}(D_n)$ to be the moduli space of positively-oriented complex structures on $D_n$ up to labeling-preserving biholomorphisms. Recall that this space has a Deligne-Mumford compactification $\overline{\mathrm{Conf}}(D_n)$ which is diffeomorphic to the associahedron and whose boundary $\partial\overline{\mathrm{Conf}}(D_n)$ consists of trees of equivalence classes of complex structures on polygons with each edge representing a gluing of two polygons along a vertex.
	\begin{definition}[{\cite[Definition 3.5]{LOT2}}]
		For a fixed symplectic form $\omega_\Sigma$ on a Riemann surface $\Sigma$, an \emph{admissible collection of almost-complex structures} is a choice of $\R$-invariant almost complex structure $J$ on $\Sigma\times[0,1]\times\R$ and a smooth family $\{J_j\}_{j\in\mathrm{Conf}(D_n)}$ of almost complex structures on $\Sigma\times D_n$ for each $n\geq 3$ such that the following conditions hold:
		\begin{itemize}
			\item For each $j\in\mathrm{Conf}(D_n)$, the projection $\pi_{\mathbb{D}}:\Sigma\times D_n\to D_n$ is $(J_j,j)$-holomorphic.
			\item For every $j\in\mathrm{Conf}(D_n)$, the fibers of $\pi_{\mathbb{D}}$ are $J_j$-holomorphic.
			\item Every $J_j$ is adjusted to the split symplectic form $\omega_\Sigma\oplus\omega_j$ on $\Sigma\times D_n$.
			\item  Each $J_j$ agrees with $J$ near the punctures of $D_n$ in the sense that every puncture has a strip-like neighborhood $U$ in $D_n$ such that $(\Sigma\times U,J_j|_{\Sigma\times U})$ and $(\Sigma\times[0,1]\times(0,\infty),J)$ are biholomorphically equivalent.
			\item If $(j_k)$ is a sequence in $\mathrm{Conf}(D_n)$ converging to some point $j_\infty\in\partial\overline{\mathrm{Conf}}(D_n)$ lying in the codimension-$1$ boundary stratum, i.e. a point $(j_{\infty,1},j_{\infty,2})\in\mathrm{Conf}(D_{m+1})\times\mathrm{Conf}(D_{n-m+1})$ for some $m$, then the complex structures $J_{j_k}$ converge to $J_{j_{\infty,1}}\sqcup J_{j_{\infty,2}}$ on $(\Sigma\times D_{m+1})\sqcup(\Sigma\times D_{n-m+1})$. Convergence here is in the sense that, as $k\to\infty$, some arcs in $D_{m+1}$ collapse and, over neighborhoods of these arcs, the complex structures $J_{j_k}$ are obtained by inserting longer and longer necks the $J_{j_k}$ converge in the $C^\infty$-topology outside of these neighborhoods. The analogous compatibility condition is required for points lying in higher codimension boundary strata.
		\end{itemize}
	\end{definition}
	\begin{definition}[{\cite[Definition 4.5]{LOT2}}]
		Let $(\Sigma,\bm{\alpha},\bm{\beta}^1,\dots,\bm{\beta}^n,z)$ be an admissible bordered Heegaard multidiagram in the sense of \cite[Definition 4.2]{LOT2}, where $\bm{\alpha}$ is a complete set of bordered attaching curves compatible with $\mathcal{Z}$. Let $S$ be a punctured Riemann surface and $\{J_{j}\}_{j\in\mathrm{Conf}(D_{n+1})}$ be an admissible collection of almost complex structures. Fix generators $\bm{x}^k\in\mathfrak{S}(\bm{\beta}^k,\bm{\beta}^{k+1})$ for $k=1,\dots,n-1$ and $\bm{x}^0\in\mathfrak{S}(\bm{\alpha},\bm{\beta}^1)$, $\bm{x}^n\in\mathfrak{S}(\bm{\alpha},\bm{\beta}^n)$, and let $q_i\in\partial D_{n+1}$ be points for $i=1,\dots,k$. Consider maps of the form
		\begin{align}
			u:(S,\partial S)\to(\Sigma\times D_{n+1},(\bm{\alpha}\times e_0)\cup(\bm{\beta}^1\times e_1)\cup\cdots\cup(\bm{\beta}^n\times e_n))
		\end{align}
		such that the following hold:
		\begin{itemize}
			\item The projection map $\pi_\Sigma\circ u:S\to\Sigma$ has degree 0 at the region adjacent to the basepoint $z$.
			\item The punctures of $S$ are mapped to the punctures $\{p_{i,i+1}\}\cup\{q_i\}$ of $D_{n+1}\setminus\{q_i\}$.
			\item The map $u$ is asymptotic to $\bm{x}^i\times\{p_{i,i+1}\}$ at the preimage of $p_{i,i+1}$.
			\item $u$ is asymptotic to $\bm{\rho}_i\times\{q_i\}$ at the punctures lying above $q_i$ for some set $\bm{\rho}_i$ of Reeb chords in $\mathcal{Z}$.
			\item At each $q\in e_0\smallsetminus\{q_i\}$, the $g$ points $(\pi_\Sigma\circ u)((\pi_{\mathbb{D}}\circ u)^{-1}(q))$ lie in $g$ distinct $\alpha$-curves. Equivalently, $\bm{x}\otimes a(\bm{\rho}_1)\otimes\cdots\otimes a(\bm{\rho}_m)$ is nonzero, where tensor products are taken over the ring of idempotents in $\mathcal{A}(\mathcal{Z})$.
		\end{itemize}
		The set of maps of this type decomposes according to homology classes, the set of which we denote by $\pi_2(\bm{x}^n,\bm{x}^{n-1},\dots,\bm{x}^0;\bm{\rho}_1,\dots,\bm{\rho}_m)$. For a fixed homology class $B\in\pi_2(\bm{x}^n,\bm{x}^{n-1},\dots,\bm{x}^0;\bm{\rho}_1,\dots,\bm{\rho}_m)$, let
		\begin{align}
			\mathcal{M}^B(\bm{x}^n,\bm{x}^{n-1},\dots,\bm{x}^0;\bm{\rho}_1,\dots,\bm{\rho}_m;S)
		\end{align}
		denote the moduli space of pairs of the form $(j,u)$ with $j\in\mathrm{Conf}(D_{n+1})$ and $u$ a $J_j$-holomorphic representative of $B$.
	\end{definition}
	\begin{lemma}[{\cite[Lemma 4.7]{LOT2}}]
		The expected dimension of the moduli space $\mathcal{M}^B(\bm{x}^n,\bm{x}^{n-1},\dots,\bm{x}^0;\bm{\rho}_1,\dots,\bm{\rho}_m;S)$ is given by $\ind(B,S;\bm{\rho}_1,\dots,\bm{\rho}_m)+n-2$, where
		\begin{align}
			\ind(B,S;\bm{\rho}_1,\dots,\bm{\rho}_m)=\left(\frac{3-n}{2}\right)g-\chi(S)+2e(B)+m,
		\end{align}
		where $g$ is the genus of $\Sigma$ and $e(B)$ is the Euler measure of $B$.
	\end{lemma}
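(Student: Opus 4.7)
The plan is to decompose the expected dimension into two contributions: the Fredholm index of the linearized Cauchy-Riemann operator at a fixed complex structure $j \in \mathrm{Conf}(D_{n+1})$ on the target polygon, plus the dimension $n - 2$ of $\mathrm{Conf}(D_{n+1})$ itself. The $n - 2$ is immediate from the associahedron, so the content is identifying $\ind(B, S; \bm{\rho}_1, \ldots, \bm{\rho}_m)$ with the Fredholm index at a fixed $j$.

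To compute this Fredholm index, I would apply the Riemann-Roch theorem for surfaces with boundary and strip-like ends to the linearized CR operator acting on sections of $u^* T(\Sigma \times D_{n+1})$, equipped with totally real boundary conditions along $\bm{\alpha} \times e_0$ and $\bm{\beta}^k \times e_k$ together with Reeb-chord asymptotic conditions at the east-infinity punctures over the $q_i$. I would then split the pulled-back tangent bundle into its horizontal component $T\Sigma$ and its vertical component $TD_{n+1}$. The vertical piece is controlled by the holomorphic projection $\pi_{\mathbb{D}} \circ u$ and contributes a purely topological term in $\chi(S)$. The horizontal piece is governed by a polygon generalization of Lipshitz's cylindrical Heegaard-Floer index formula: the Euler measure of the domain gives $2e(B)$, each Reeb chord end contributes $+1$ for a total of $+m$, and the corner contributions at the $n+1$ generators $\bm{x}^0, \ldots, \bm{x}^n$ --- each of which is a $g$-tuple of intersection points --- aggregate into the coefficient $\tfrac{3-n}{2} g$ of the genus term once the Maslov indices of the asymptotic operators at the corners are accounted for.

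An alternative inductive strategy, which I would run in parallel as a consistency check on the unusual coefficient $(3-n)/2$, is to degenerate an $(n+1)$-gon into two polygons with $n_1 + 1$ and $n_2 + 1$ edges by collapsing an arc in $\overline{\mathrm{Conf}}(D_{n+1})$. Additivity of the Fredholm index under gluing, together with additivity of $\chi(S)$, $e(B)$, and $m$ across the two pieces, reduces the general formula to the bigon case $n = 1$, which is Lipshitz's original bordered index formula (with the $\R$-quotient on strips accounting for the apparent $n - 2 = -1$). The main technical obstacle is correctly identifying the Maslov indices of the asymptotic operators at each polygon corner: the way they interact with the genus $g$ of $\Sigma$ produces the fractional-looking coefficient $(3 - n)/2$, and the careful tracking of this data across corners and Reeb chord ends is the delicate point handled in \cite{LOT2}, Lemma 4.7.
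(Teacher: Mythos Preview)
The paper does not prove this lemma; it merely states it with the citation \cite[Lemma 4.7]{LOT2} and moves on. There is therefore no proof in the paper to compare your proposal against.

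Your sketch is a plausible outline of the standard argument behind such index formulas: separating off the $(n-2)$-dimensional moduli of conformal structures on the polygon, applying Riemann--Roch with totally real boundary conditions and strip-like ends, and splitting into horizontal and vertical summands. The degeneration-to-bigons consistency check you describe is also the right heuristic. That said, your proposal stops short of an actual computation: you do not carry out the Maslov index bookkeeping at the corners that produces the $\tfrac{3-n}{2}g$ term, and you explicitly defer the ``delicate point'' back to \cite{LOT2}. So what you have written is an accurate roadmap rather than a proof, which is appropriate given that the paper under review treats the result as a black box.
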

	\begin{remark}
		The same statement holds if the multidiagram has more than one boundary component, each of which meets exactly one set of bordered attaching curves.
	\end{remark}
	The Euler measure $e(B)$ can be characterized as follows: if $D$ is a surface with boundary and corners equipped with a metric $h$ such that $\partial D$ is geodesic and has right-angled corners, then $e(D)$ is $\frac{1}{2\pi}$ times the integral over $D$ of the curvature of $h$. From this definition, one can see that $e(D)$ is linear with respect to disjoint union and gluing along boundary segments so, if $B$ is a formal sum $B=\sum_i n_iD_i$ of elementary domains $D_i$, then $e(B)=\sum_i n_ie(D_i)$. It follows from the Gau\ss{}--Bonnet theorem that if $D$ is a surface as above with $k$ corners with angle $\frac{\pi}{2}$ and $\ell$ with angle $\frac{3\pi}{2}$, then
	\begin{align}
		e(D)=\chi(D)-\frac{k}{4}+\frac{\ell}{4}.
	\end{align}
	In particular, for a $k$-gon $D$ with convex corners, we have $e(D)=1-\frac{k}{4}$. Now suppose that $h$ is instead an arbitrary metric on $D$ and that $\partial D$ decomposes as $\partial D=c_1\cup\cdots\cup c_k$. Parametrize each boundary segment $c_i$ by $[0,1]$. For each $i=1,\dots,k$, let $\theta_i$
	be the angle by which the tangent vector to $\partial D$ turns at the $i^{\mathrm{th}}$ corner $c_i(0)$, i.e. $\pi$ minus the interior angle of $D$ at $c_i(0)$, and define $t_i=\frac{\theta_i}{2\pi}-\frac{1}{4}$. A second application of the Gau\ss{}--Bonnet theorem allows us to rewrite $e(D)$ as
	\begin{align}
		e(D)=\frac{1}{2\pi}\left(\int_DKdA+\sum_{i=1}^{k}\int_{c_i}\kappa_hds\right)+\sum_{i=1}^{k}t_i,
	\end{align}
	where $K$ and $\kappa_h$ are the curvature and geodesic curvature of $h$, respectively. Therefore, if $h$ is flat and $D$ has geodesic boundary, we may then compute $e(D)$ by summing the contributions $t_i$ from each corner. In particular, corners with interior angles of 60-, 90-, and 120-degrees contribute $+\frac{1}{12}$, $0$, and $-\frac{1}{12}$, respectively, to the Euler measure of a flat polygon with geodesic boundary. We will use this fact momentarily.
	
	In the case of triangles we have $n=2$ so the dimension of the moduli space $\mathcal{M}^B(\bm{x}^2,\bm{x}^{1},\bm{x}^0;\bm{\rho}_1,\dots,\bm{\rho}_m;S)$ is given exactly by $\ind(B,S;\bm{\rho}_1,\dots,\bm{\rho}_m)$, which we may write more succinctly as
	\begin{align}
		\ind(B,S;\bm{\rho}_1,\dots,\bm{\rho}_m)=\frac{g}{2}-\chi(S)+2e(B)+m.
	\end{align}
	\begin{lemma}\label{TriangleLemma}
		There are no positive domains for index zero holomorphic triangles in $\AT$ meeting $\partial\AT$ and having corners cyclically ordered according to $(\gamma\cap\delta,\delta\cap\varepsilon,\gamma\cap\varepsilon)$.
	\end{lemma}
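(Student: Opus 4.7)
The plan is to argue by contradiction, combining the triangle index formula with a flat-metric Euler measure calculation that exploits the author's modified slopes. Suppose, for contradiction, that $B$ is a positive domain supporting an index-zero holomorphic triangle $u\colon S\to \overline{\Sigma}_{\AT}\times D_3$ with corners cyclically ordered as $(p_{\gamma\delta},p_{\delta\varepsilon},p_{\gamma\varepsilon})$ and having $m\geq 1$ Reeb chord asymptotics at $\partial\AT$. The index formula reads $0 = \frac{g}{2}-\chi(S)+2e(B)+m$, where $g=g(\overline{\Sigma}_{\AT})$.

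I would first equip $\overline{\Sigma}_{\AT}$ with the flat metric inherited from the planar model $\square_k$. Since the perturbations are planar translations, the arcs $\bm\gamma,\bm\delta,\bm\varepsilon$ remain geodesic with slopes $\tan(\pi/6),\infty,\tan(5\pi/6)$, and we may further arrange that they meet $\partial\AT$ orthogonally. Every pairwise arc intersection then has two $\pi/3$-quadrants and two $2\pi/3$-quadrants, so by the Gauss--Bonnet recipe spelled out just before the lemma each triangle corner of $B$ contributes $+\tfrac{1}{12}$ or $-\tfrac{1}{12}$ to $e(B)$ according to whether the domain occupies a $\pi/3$- or a $2\pi/3$-quadrant, while each Reeb chord endpoint contributes $0$.

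Next, I would carry out Gauss--Bonnet on each connected component of $S$: letting $k_2$ count the number of $2\pi/3$-triangle corners on a component (out of its $3$ triangle corners) and $m'$ its Reeb chord endpoints, the relation $\sum_i(\pi-\alpha_i)=2\pi\chi$ yields a rigid linear constraint between $k_2$, $m'$, and $\chi$. Summing over components and using positivity of $B$, the specified cyclic order, and the bound $\chi(S)\leq 1$ on any surface-with-boundary component, the only configuration compatible with $m\geq 1$ is that on each relevant component all three triangle corners occupy the $2\pi/3$-quadrant and exactly one Reeb chord excursion is present. Plugging the resulting $e(B)\leq -\tfrac{1}{4}$ per such component into the index formula further constrains $g$ to a single exceptional value, reducing the problem to a single diagram-level configuration.

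The main obstacle, and the heart of the argument, is ruling out this remaining "inverted-triangle plus one Reeb chord" configuration by a direct geometric inspection in the planar cover $\square_k$. The three $2\pi/3$-quadrants, cyclically consistent with $(\gamma\delta,\delta\varepsilon,\gamma\varepsilon)$, point outward from the small canonical $\pi/3$-triangle associated to a matched pair; I would trace the would-be boundary of $B$ arc by arc and show that the $\bm\gamma$-, $\bm\delta$-, and $\bm\varepsilon$-arc endpoints on the three boundary components $\partial_\gamma\AT,\partial_\delta\AT,\partial_\varepsilon\AT$ are distributed so that no single Reeb chord on any one boundary component can close the boundary into a positive domain, with positivity failing on at least one elementary region adjacent to $\partial\AT$. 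For pointed matched circles of genus $k\geq 2$ the index formula alone rules out the inverted case, so the direct geometric argument only needs to be carried out in the local model near a single nearly-triple point, or by a one-time inspection of the genus-$1$ diagram in Figure \ref{AT1}.
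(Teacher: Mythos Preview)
Your setup is on the right track, but you miss the one observation that makes the argument work and thereby end up in an unnecessary and incomplete case analysis. You treat each interior corner of $B$ as contributing either $+\tfrac{1}{12}$ or $-\tfrac{1}{12}$ depending on whether the domain occupies the $\pi/3$- or $2\pi/3$-quadrant. The point of the hypothesis on the cyclic order $(\gamma\cap\delta,\delta\cap\varepsilon,\gamma\cap\varepsilon)$, together with the specific choice of slopes and Auroux's perturbation convention, is precisely that at every genuine corner of $B$ the domain is forced to occupy a $\pi/3$-quadrant (or, if the corner has higher multiplicity, two $\pi/3$-quadrants and one $2\pi/3$-quadrant). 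In either case the local contribution is exactly $+\tfrac{1}{12}$; the $-\tfrac{1}{12}$ alternative never occurs. Once you have this, the Reeb chord contributions cancel in pairs (the two endpoints of each chord meet the boundary at complementary angles), so $e(B)=\tfrac{3g}{12}=\tfrac{g}{4}$ and the index formula gives $0=g-\chi(S)+m$. Since $S$ has at most $g$ components, $\chi(S)\le g$, forcing $m=0$ immediately. No ``inverted triangle'' configuration ever arises and no diagram-by-diagram inspection is needed.

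Two further issues with your proposal: first, you cannot simultaneously keep the $60^\circ/120^\circ$ interior intersection angles and arrange all arcs to meet $\partial\AT$ orthogonally using planar translations of straight lines---the paper instead uses boundary angles of $120^\circ$, $90^\circ$, and $30^\circ$ for $\gamma$, $\delta$, $\varepsilon$ respectively and observes that the two endpoints of any Reeb chord contribute at complementary angles which cancel. Second, your step ``carry out Gauss--Bonnet on each connected component of $S$'' conflates the source surface $S$ with the domain $B$: the corner angles and the Euler measure live on $B\subset\overline{\Sigma}_{\AT}$, and there is no flat metric on $S$ with those prescribed corner angles to which Gauss--Bonnet applies in the way you describe. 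The linear constraint you derive between $k_2$, $m'$, and $\chi$ is therefore not justified, and the residual case you propose to handle by ``direct geometric inspection'' is both unnecessary and not actually established.
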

	\begin{proof}
		We choose a metric on $\AT$ which is flat everywhere except on the component of $\AT\smallsetminus(\bm{\gamma}\cup\bm{\delta}\cup\bm{\varepsilon})$ containing $\bm{z}$. Moreover we choose this metric so that every $\gamma$-, $\delta$, and $\varepsilon$-curve is geodesic and every intersection of two such curves occurs at 60 and 120 degree angles, the boundary components of $\AT$ are geodesic, and, for every $\eta\in\{\gamma,\delta,\varepsilon\}$, each $\eta$-curve meets $\partial\AT$ at the same angle: 120 degrees for the $\gamma$-curves, 90 degrees for the $\delta$-curves, and 30 degrees for the $\varepsilon$-curves. To see that we can choose such a metric, note that the square $\square_k$ inherits a metric from its inclusion into the plane which descends to a metric on $\AT$ which is flat except on the region containing $\bm{z}$. Since the boundary of $\square_k$ is geodesic, it follows that $\partial\AT$ is geodesic. To see that every $\gamma$-, $\delta$-, and $\varepsilon$-curve is geodesic and have the specified intersection angles, recall that we chose a particular modification of $\AT$ so that these curves arise from pairs $g_i$, $d_i$, and $e_i$ of straight lines making an angle of 150 degrees, 90 degrees, and 30 degrees with the positive horizontal direction, respectively. Since the perturbations necessary to obtain the curves in $\AT$ can be achieved by planar translations of the lines in $\R^2$ corresponding to the pairs $g_i$ and $e_i$, it follows that the $\gamma$-, $\delta$-, and $\varepsilon$-curves are obtained as quotients of pairs of straight line segments with the same angle and hence are geodesic. The choice of angles of these segments guarantees that each of the intersections in $\AT$ occurs in one of the specified angles.
		
		Suppose that $B$ is a positive domain for an index zero holomorphic triangle in $\AT$ which has the above cyclic ordering on its corners and which does not meet the component of $\AT\smallsetminus(\bm{\gamma}\cup\bm{\delta}\cup\bm{\varepsilon})$ containing $\bm{z}$. As in the proof of \cite[Proposition 3.5]{Auroux2010}, the Euler measure of $B$ can be computed by summing the contributions from its corners because $\partial B$ is geodesic: $+\frac{1}{12}$ for every corner with a 60-degree angle, $0$ for every corner with a 90-degree angle, and $-\frac{1}{12}$ for every corner with a 120-degree angle. If $p$ is an interior intersection point of two of the collections of curves in $\AT$ and $B$ hits $p$ at an interior point, then the local multiplicities of $B$ in the four elementary domains meeting $p$ are all equal so the local contribution of $p$ to the Euler measure is zero. If $B$ hits $p$ at a point on the boundary which is not a corner, then $B$ hits two of the four regions meeting at $p$. One of these regions meets $p$ at a 60-degree angle and the other meets it at a 120-degree angle so the local contributions to the Euler measure cancel. If $p$ is a genuine corner of $B$, then the cyclic ordering of the corners forces one of two scenarios: either $B$ locally hits a region with a 60-degree angle at $p$ or $B$ locally hits two regions with a 60-degree angle at $p$ and one with a 120-degree angle at $p$. In either of these two cases, the local contribution of such a corner is $+\frac{1}{12}$.
		
		Now, if $p\in\bm{\eta}\cap\partial\AT$ for some $\eta\in\{\gamma,\delta,\varepsilon\}$, then there are two cases that we need to account for. Suppose, for the moment, that $B$ meets exactly one Reeb chord $\rho$ in the $\eta$-boundary of $\AT$. If $p$ is contained in the interior of $\rho$, then the local multiplicities of $B$ in the two regions meeting $p$ are equal so the local contribution to the Euler measure is zero. Otherwise, $p$ is an end of $\rho$, in which case there is a boundary intersection point $q$ with $\partial\rho=\{p,q\}$ and the local contributions of these two corners to the Euler measure cancel since $B$ meets $p$ and $q$ at complementary angles. In general, $B$ could meet multiple boundary Reeb chords in which case the sum of the local contributions of the ends of all of the Reeb chords is zero since we can decompose this as a sum of single Reeb chord terms.
		
		Summing over the $3g$ interior corners and all of the boundary Reeb chords of $B$, we see that $e(B)=\frac{g}{4}$ so, consequently, we have
		\begin{align}
			\ind(B,S;\bm{\rho}_1,\dots,\bm{\rho}_m)=g-\chi(S)+m.
		\end{align}
		For rigid triangles, this then tells us that $\chi(S)=g+m$ but $S$ has at most $g$ connected components so $\chi(S)\leq g$. Therefore, if $B$ is a class represented by a rigid holomorphic triangle, then we must have $m=0$, i.e. $B$ does not meet the boundary of $\AT$.
	\end{proof}
	Let $\mathcal{H}_i=(\overline{\Sigma}_i,\bm{\eta}_i,\bm{\beta}_i,z)$ be admissible bordered Heegaard diagrams for $Y_i$, $i=1,2,3$, where $\eta_i=\gamma,\delta,\varepsilon$ according to the ordering $\gamma<\delta<\varepsilon$. Let $\mathcal{H}_i^+=(\overline{\Sigma}_i,\bm{\eta}_i,\bm{\beta}_i^0,\bm{\beta}_i^1,z)$ be the result of creating a single parallel copy of each $\bm{\beta}$-circle and performing a finger move to create two intersection points between the resulting parallel pairs. Finally, let $\AT_{1,2,3}=\AT(\mathcal{H}_1,\mathcal{H}_2,\mathcal{H}_3)$ be the result of gluing $\mathcal{H}_1^+$, $\mathcal{H}_2^+$, and $\mathcal{H}_3^+$ along the $\bm{\gamma}$-, $\bm{\delta}$-, and $\bm{\varepsilon}$-boundaries of $\AT(\mathcal{Z})$.
	\begin{figure}
		\begin{center}
			\includegraphics[scale=0.8]{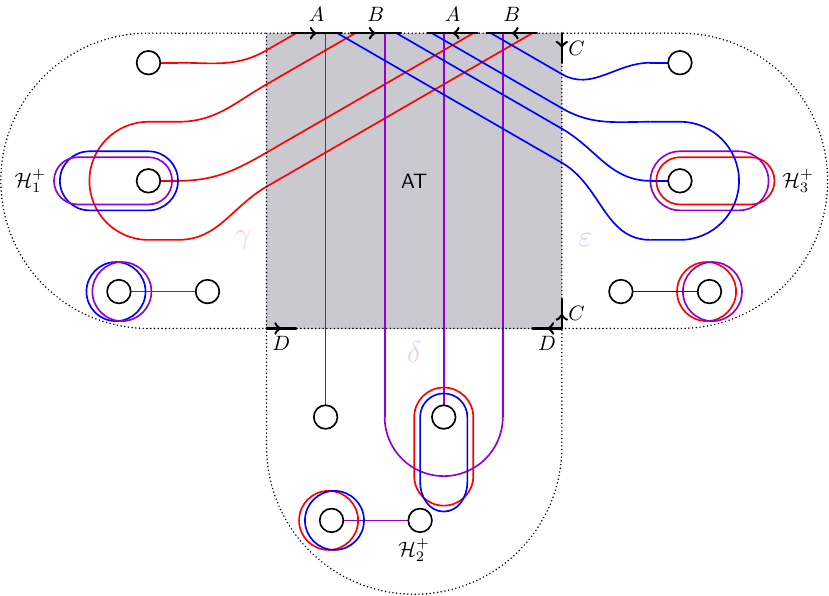}
		\end{center}
		\caption{An example of an $\AT_{1,2,3}$ obtained by gluing triples to $\AT(\mathcal{Z})$.}
	\end{figure}
	\begin{proposition}\label{FdeltadeltaProp}
		If $\mathcal{H}_2$ is admissible, the dg-bimodule homomorphism
		\begin{align}
			F_{\delta,\delta}:\mathcal{A}_{\bm{\gamma},\bm{\delta}}\boxtimes\widehat{\CFD}(\bm{\delta},\bm{\beta}_2^0)\otimes\overline{\widehat{\CFD}(\bm{\delta},\bm{\beta}_2^1)}\boxtimes\mathcal{A}_{\bm{\delta},\bm{\varepsilon}}\to\mathcal{A}_{\bm{\gamma},\bm{\varepsilon}}
		\end{align}
		defined by counting triangles in $\AT\cup_\delta\mathcal{H}_2^+$ with one corner at the bottom-graded generator of $\widehat{\CF}(\bm{\beta}_2^0,\bm{\beta}_2^1)$ is given up to homotopy by the map
		\begin{align}
			\rho\boxtimes\bm{u}^0\otimes\overline{\bm{v}}^1\boxtimes\sigma\mapsto\rho\overline{\bm{v}}^1(\bm{u}^1)\sigma,
		\end{align}
		where we regard $\overline{\bm{v}}^1$ as a map from $\widehat{\CFD}(\bm{\delta},\bm{\beta}_2^1)$ to the ring of idempotents $\mathcal{I}$ in $\mathcal{A}$.
	\end{proposition}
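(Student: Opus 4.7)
The plan is to degenerate the defining triangle count by neck-stretching along the gluing $\bm{\delta}$-circles in $\AT\cup_\delta\mathcal{H}_2^+$. As the neck length tends to infinity, each contributing holomorphic triangle in the glued diagram breaks into a pair of curves with matched Reeb chord asymptotics: one curve on the $\AT(\mathcal{Z})$ side with boundary on $\bm{\gamma}\cup\bm{\delta}\cup\bm{\varepsilon}$, and one on the $\mathcal{H}_2^+$ side with boundary on $\bm{\delta}\cup\bm{\beta}_2^0\cup\bm{\beta}_2^1$ carrying the $\Theta$-corner.

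The first key step is to apply Lemma~\ref{TriangleLemma} to the $\AT$-side piece. The algebraic inputs $\rho\in\mathcal{A}_{\bm{\gamma},\bm{\delta}}$ and $\sigma\in\mathcal{A}_{\bm{\delta},\bm{\varepsilon}}$ together with the output in $\mathcal{A}_{\bm{\gamma},\bm{\varepsilon}}$ impose the cyclic corner ordering $(\gamma\cap\delta,\,\delta\cap\varepsilon,\,\gamma\cap\varepsilon)$, so Lemma~\ref{TriangleLemma} rules out any positive domain for an index-$0$ contribution that meets $\partial\AT$. In particular the gluing-side Reeb chord asymptotics of the $\AT$-piece are empty, and the $\AT$-side contribution reduces to the provincial triangle count; by the corollary of \cite[Proposition~4.8]{Auroux2010} under the identification of each $\mathcal{A}_{\eta\theta}$ with $\mathcal{A}(-\mathcal{Z})$, this realizes the algebra multiplication $\rho\otimes\sigma\mapsto\rho\sigma$ in $\mathcal{A}_{\bm{\gamma},\bm{\varepsilon}}$.

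By the matching of Reeb chord asymptotics across the neck, the $\mathcal{H}_2^+$ piece must then also have empty asymptotics on the gluing boundary. It is therefore a closed Heegaard triangle in the doubled diagram $\mathcal{H}_2^+$ with corners at $\bm{u}^0\in\mathfrak{S}(\bm{\delta},\bm{\beta}_2^0)$, some $\bm{x}^1\in\mathfrak{S}(\bm{\delta},\bm{\beta}_2^1)$, and the canonical generator $\Theta\in\widehat{\CF}(\bm{\beta}_2^0,\bm{\beta}_2^1)$. Lemma~\ref{PerturbationLemma} (applied in the provincial bordered setting) identifies this count with the nearest-point map $\bm{u}^0\mapsto\bm{u}^1$ up to chain homotopy. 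The pairing with the tensor factor $\overline{\bm{v}}^1$, regarded as a map $\widehat{\CFD}(\bm{\delta},\bm{\beta}_2^1)\to\mathcal{I}$, then forces $\bm{x}^1=\bm{v}^1$ and produces the idempotent $\overline{\bm{v}}^1(\bm{u}^1)$ precisely when $\bm{u}^1$ and $\bm{v}^1$ occupy compatible $\bm{\delta}$-positions.

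Assembling the two sides yields, up to chain homotopy, the map $\rho\boxtimes\bm{u}^0\otimes\overline{\bm{v}}^1\boxtimes\sigma\mapsto\rho\cdot\overline{\bm{v}}^1(\bm{u}^1)\cdot\sigma$, as claimed. The main obstacle will be making the compactness/gluing argument for the neck-stretching rigorous in the bordered setting with multiple bordered boundary components: one must verify that every index-$0$ limiting configuration decomposes as a pair of rigid pieces of the indicated type, with no unanticipated broken configurations surviving (including degenerations involving Reeb chords on the $\bm{\gamma}$- or $\bm{\varepsilon}$-boundaries of $\AT$, or extra strip-breaking off of the $\Theta$-corner). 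One will also need to confirm that admissibility of $\mathcal{H}_2$ together with admissibility of $\AT$ ensures admissibility of the glued diagram so that the relevant moduli counts are finite, and to combine the homotopy from Auroux's identification with the homotopy produced by Lemma~\ref{PerturbationLemma} into a single homotopy on the composite map.
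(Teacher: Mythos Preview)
Your proposal is correct and follows essentially the same route as the paper: neck-stretch along the $\bm{\delta}$-gluing (the paper phrases this as invoking the pairing theorem for triangles \cite[Proposition~5.35]{LOT2}), use Lemma~\ref{TriangleLemma} to force the $\AT$-side piece to be provincial so that Auroux's result identifies it with multiplication, and identify the $\mathcal{H}_2^+$-side with the nearest-point map via Lemma~\ref{PerturbationLemma}.

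One point the paper treats more carefully than your sketch is the $\mathcal{H}_2^+$-side. The proposition fixes the corner at the \emph{bottom}-graded generator $\Theta^{\mathrm{bot}}_{\beta_2^0\beta_2^1}$, whereas Lemma~\ref{PerturbationLemma} is stated for the map obtained by inserting $\Theta^{\mathrm{top}}$ and, moreover, counts \emph{all} triangles rather than only provincial ones. The paper bridges this by introducing the map $L$ (the provincial $\Theta^{\mathrm{bot}}$-count, packaged as a pairing $\widehat{\CFD}(\bm{\delta},\bm{\beta}_2^0)\otimes\overline{\widehat{\CFD}(\bm{\delta},\bm{\beta}_2^1)}\to\mathcal{A}$) and observing it is dual to the type-$D$ morphism $R$ (the provincial $\Theta^{\mathrm{top}}$-count); it then shows $R$ is filtered with filtration-preserving part $\Psi_{0\to1}$, since $R$ is a summand of $\widehat{F}^{\mathrm{top}}_{\delta\beta_2^0\beta_2^1}$, and reruns the neck-stretching argument of \cite[Lemma~5.4]{Guth2022} to conclude $R\simeq\Psi_{0\to1}$. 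Your parenthetical ``applied in the provincial bordered setting'' is exactly where this extra work lives; you should make explicit the $\Theta^{\mathrm{bot}}$/$\Theta^{\mathrm{top}}$ duality and why the provincial restriction still yields a map homotopic to the nearest-point map.
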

	\begin{proof}
		By definition, we have
		\begin{align}
			F_{\delta,\delta}(\rho\boxtimes\bm{u}^0\otimes\overline{\bm{v}}^1\boxtimes\sigma)=\sum_{\tau\in\mathfrak{S}(\AZ_{\gamma,\varepsilon})}\sum_{\ind(C)=0}\#\mathcal{M}^C(\rho\boxtimes\bm{u}^0,\overline{\bm{v}}^1\boxtimes\sigma,\tau\otimes\Theta^{\mathrm{bot}}_{\beta_2^0\beta_2^1})\tau,
		\end{align}
		where $C$ ranges over $\pi_2(\rho\boxtimes\bm{u}^0,\overline{\bm{v}}^1\boxtimes\sigma,\tau\otimes\Theta^{\mathrm{bot}}_{\beta_2^0\beta_2^1})$ and $\mathcal{M}^C(\rho\boxtimes\bm{u}^0,\overline{\bm{v}}^1\boxtimes\sigma,\tau\otimes\Theta^{\mathrm{bot}}_{\beta_2^0\beta_2^1})$ is the moduli space of pseudoholomorphic representatives of the class $C$. By the pairing theorem for triangles \cite[Proposition 5.35]{LOT2}, this map is homotopic to the one given by counting rigid triangles paired with sequences of bigons. Since there are no positive domains of rigid holomorphic triangles in $\AT$ which meet the boundary by Lemma \ref{TriangleLemma}, and because $\mathcal{H}_2^+$ is obtained by a small Hamiltonian translation, this tells us that $F_{\delta,\delta}$ is homotopic to the map
		\begin{align}
			\rho\boxtimes\bm{u}^0\otimes\overline{\bm{v}}^1\boxtimes\sigma\mapsto\sum_{\tau\in\mathfrak{S}(\AZ_{\gamma,\varepsilon})}\sum_{\ind(C)=0}\#\mathcal{M}^C_\times(\rho,\sigma,\tau,\bm{u}^0,\overline{\bm{v}}^1,\Theta^{\mathrm{bot}}_{\beta_2^0\beta_2^1})\tau,
		\end{align}
		where the moduli space $\mathcal{M}^C_\times(\rho,\sigma,\tau,\bm{u}^0,\overline{\bm{v}}^1,\Theta^{\mathrm{bot}}_{\beta_2^0\beta_2^1})$ is defined by
		\begin{align}
			\mathcal{M}^C_\times(\rho,\sigma,\tau,\bm{u}^0,\overline{\bm{v}}^1,\Theta^{\mathrm{bot}}_{\beta_2^0\beta_2^1})=\bigsqcup_{A+B=C}\mathcal{M}^A(\rho,\sigma,\tau)\times\mathcal{M}^B(\bm{u}^0,\overline{\bm{v}}^1,\Theta^{\mathrm{bot}}_{\beta_2^0\beta_2^1}),
		\end{align}
		where $A$ and $B$ are provincial domains in $\AT$ and $\mathcal{H}_2^+$, respectively. Here, $\mathcal{M}^A(\rho,\sigma,\tau)$ is the moduli space of rigid pseudoholomorphic triangles of class $A$ from $\rho\otimes\sigma$ to $\tau$ and $\mathcal{M}^B(\bm{u}^0,\bm{v}^1,\Theta^{\mathrm{bot}}_{\beta_2^0\beta_2^1})$ is the moduli space of rigid provincial triangles from $\bm{u}^0\otimes\overline{\bm{v}}^1$ to $\Theta^{\mathrm{bot}}_{\beta_2^0\beta_2^1}$ representing the class $B$. Note that this latter moduli space is empty unless $\bm{u}^0$ and $\bm{v}^1$ have the same left-idempotent $\iota^{01}$, which is then necessarily also the right-idempotent for $\rho$ and the left-idempotent for $\sigma$ in order for $\rho\boxtimes\bm{u}^0\otimes\overline{\bm{v}}^1\boxtimes\sigma$ to be nonzero. Together with additivity of the embedded index for disjoint unions and the fact that the index of a class with a pseudoholomorphic representative is non-negative, this then implies that
		\begin{align}
			\begin{split}
				&F_{\delta,\delta}(\rho\boxtimes\bm{u}^0\otimes\overline{\bm{v}}^1\boxtimes\sigma)\\&\simeq\sum_{\tau\in\mathfrak{S}(\AZ_{\gamma,\varepsilon})}\sum_{\ind(A)=\ind(B)=0}\#\mathcal{M}^A(\rho,\sigma,\tau)\#\mathcal{M}^B(\bm{u}^0,\overline{\bm{v}}^1,\Theta^{\mathrm{bot}}_{\beta_2^0\beta_2^1})\tau.
			\end{split}
		\end{align}
		However, this gives us
		\begin{align}
			\begin{split}
				&F_{\delta,\delta}(\rho\boxtimes\bm{u}^0\otimes\overline{\bm{v}}^1\boxtimes\sigma)\\&\simeq\left(\sum_{\ind(B)=0}\#\mathcal{M}^B(\bm{u}^0,\overline{\bm{v}}^1,\Theta^{\mathrm{bot}}_{\beta_2^0\beta_2^1})\right)\sum_{\tau\in\mathfrak{S}(\AZ_{\gamma,\varepsilon})}\sum_{\ind(A)=0}\#\mathcal{M}^A(\rho,\sigma,\tau)\tau,
			\end{split}
		\end{align}
		and the map
		\begin{align}
			\rho\otimes\sigma\mapsto\sum_{\tau\in\mathfrak{S}(\AZ_{\gamma,\varepsilon})}\sum_{\ind(A)=0}\#\mathcal{M}^A(\rho,\sigma,\tau)\tau
		\end{align}
		is precisely the multiplication map $\mathcal{A}\otimes\mathcal{A}\to\mathcal{A}$ by \cite[Proposition 4.8]{Auroux2010}. We then have
		\begin{align}
			F_{\delta,\delta}(\rho\boxtimes\bm{u}^0\otimes\overline{\bm{v}}^1\boxtimes\sigma)\simeq\rho\left(\sum_B\#\mathcal{M}^B(\bm{u}^0,\overline{\bm{v}}^1,\Theta^{\mathrm{bot}}_{\beta_2^0\beta_2^1})\iota^0\iota^1\right)\sigma,
		\end{align}
		where $\iota^0$ is the left-idempotent for $\bm{u}^0$ and $\iota^1$ is the right-idempotent for $\overline{\bm{v}}^1$, which we may insert at no cost since the space $\mathcal{M}^B(\bm{u}^0,\overline{\bm{v}}^1,\Theta^{\mathrm{bot}}_{\beta_2^0\beta_2^1})$ of provincial triangles is empty unless $\iota^0=\iota^1=\iota^{01}$, in which case we have $\rho\sigma=\rho\iota^{01}\sigma$. We claim that the map $L:\widehat{\CFD}(\bm{\delta},\bm{\beta}_2^0)\otimes\overline{\widehat{\CFD}(\bm{\delta},\bm{\beta}^1)}\to\mathcal{A}$ given by
		\begin{align}
			\bm{u}^0\otimes\overline{\bm{v}}^1\mapsto\sum_{\ind(B)=0}\#\mathcal{M}^B(\bm{u}^0,\bm{v}^1,\Theta^{\mathrm{bot}}_{\beta_2^0\beta_2^1})\iota^0\iota^1
		\end{align}
		is homotopic to the perturbed evaluation map $ev\circ(\Psi_{0\to 1}\otimes\id)$ given on generators by
		\begin{align}
			\bm{u}^0\otimes\overline{\bm{v}}^1\mapsto \overline{\bm{v}}^1(\bm{u}^1).
		\end{align}
		However, $L$ is dual to the type-$D$ morphism $R:\widehat{\CFD}(\bm{\delta},\bm{\beta}_2^0)\to\mathcal{A}\otimes\widehat{\CFD}(\bm{\delta},\bm{\beta}^1)$ given by
		\begin{align}
			\bm{u}^0\mapsto\sum_{\bm{v}^1\in\mathfrak{S}(\bm{\delta},\bm{\beta}_2^1)}\sum_{\ind(B)=0}\#\mathcal{M}^B(\bm{u}^0,\Theta^{\mathrm{top}}_{\beta_2^0\beta_2^1},\bm{v}^1)\iota^0\iota^1\otimes\bm{v}^1
		\end{align}
		which is filtered with respect to the the filtrations $\mathcal{F}$ and $\mathcal{F}^1$ defined in Lemma \ref{PerturbationLemma}. As a filtered map, this has filtration preserving part given by $\Psi_{0\to 1}$ since $\Psi_{0\to 1}$ is a summand of $R$ and $R$ is a summand of $\widehat{F}_{\delta\beta_2^0\beta_2^1}^{\mathrm{top}}$. This implies that $R$ is an isomorphism and the same neck-stretching argument used in \cite[Lemma 5.4]{Guth2022} to show that $\widehat{F}_{\delta\beta^0_2\beta^1_2}^{\mathrm{top}}$ is homotopic to $\Psi_{0\to 1}$ can be used to show that $R$ is homotopic to $\Psi_{0\to 1}$. Such a homotopy then induces a homotopy between the corresponding dual maps. Since the dual of $\Psi_{0\to 1}$ is $ev\circ(\Psi_{0\to 1}\otimes\id)$, this proves the desired result.
	\end{proof}
	\begin{figure}
		\begin{center}
			\includegraphics[scale=1]{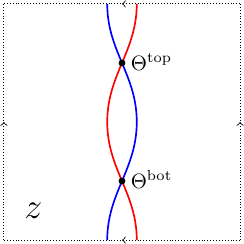}
		\end{center}
		\caption{A standard genus 1 Heegaard diagram for $S^2\times S^1$ with top- and bottom-graded generators labeled.}
		\label{standard}
	\end{figure}
	\begin{theorem}\label{AT-Comp}
		Let
		\begin{align}
			\widehat{G}_{\AT}:\Mor^{\mathcal{A}}(Y_1,Y_2)\otimes_\F\Mor^{\mathcal{A}}(Y_2,Y_3)\to\Mor^{\mathcal{A}}(Y_1,Y_3)
		\end{align}
		be the composite
		\begin{align}
			\begin{tikzcd}[ampersand replacement=\&]
				\Mor^{\mathcal{A}}(Y_1,Y_2)\otimes_\F\Mor^{\mathcal{A}}(Y_2,Y_3)\arrow[r,"\widehat{G}_{\AT}"]\arrow[d,"\cong"'] \& \Mor^{\mathcal{A}}(Y_1,Y_3)\\
				\widehat{\CF}(\mathcal{H}_1\cup\mathcal{H}_2)\otimes_{\F}\widehat{\CF}(\mathcal{H}_2\cup\mathcal{H}_3) \& \widehat{\CF}(\mathcal{H}_1\cup\mathcal{H}_3)\arrow[u,"\cong"']\\
				(\widehat{\CF}(\mathcal{H}_1\cup\mathcal{H}_2)\otimes V^{\otimes g_3})\otimes_{\F}(\widehat{\CF}(\mathcal{H}_2\cup\mathcal{H}_3)\otimes V^{\otimes g_1})\arrow[u,"\textup{1-handle}",hookleftarrow]\arrow[r,"\widehat{F}_{\AT_{1,2,3}}"'] \& \widehat{\CF}(\mathcal{H}_1\cup\mathcal{H}_3)\otimes V^{\otimes g_2}\arrow[u,"\textup{3-handle}"',twoheadrightarrow]
			\end{tikzcd}
		\end{align}
		where we take the model $\overline{\widehat{\CFD}(\mathcal{H}_i)}\boxtimes\mathcal{A}\boxtimes\widehat{\CFD}(\mathcal{H}_j)$ for $\widehat{\CF}(\mathcal{H}_i\cup\mathcal{H}_j)$, the vertical isomorphisms are the ones described above, $V
		$ is the two-dimensional model for $\widehat{\CF}(S^2\times S^1)$ given by the standard genus 1 Heegaard diagram for $S^2\times S^1$, $\widehat{F}_{\AT_{1,2,3}}$ is the map determined by the Heegaard triple $\AT_{1,2,3}$, and
		\begin{align*}
			\widehat{\CF}(Y)\stackrel{\textup{1-handle}}{\hookrightarrow}\widehat{\CF}(Y)\otimes V^{\otimes m}\cong\widehat{\CF}(Y\#(S^2\times S^1)^{\#m})
		\end{align*}
		and
		\begin{align*}
			\widehat{\CF}(Y)\otimes V^{\otimes n}\cong\widehat{\CF}(Y\#(S^2\times S^1)^{\#n})\stackrel{\textup{3-handle}}{\twoheadrightarrow} \widehat{\CF}(Y)
		\end{align*}
		are the usual 1-handle and 3-handle maps defined on generators by
		\begin{align*}
			\bm{x}\mapsto\bm{x}\otimes\Theta^{\mathrm{top}}
		\end{align*}
		and
		\begin{align}
			\bm{y}\otimes\theta\mapsto\begin{cases}
				\bm{y}\hspace{0.25cm}\textup{if $\theta=\Theta^{\mathrm{bot}}$}\\
				0\hspace{0.25cm}\textup{else,}
			\end{cases}
		\end{align}
		respectively, where $\Theta^{\mathrm{bot}}$ is the bottom-graded generator. Then $\widehat{G}_{\AT}$ agrees up to homotopy with the composition map $f\otimes g\mapsto g\circ f$.
	\end{theorem}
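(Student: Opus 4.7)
The plan is to decompose the triangle-counting map $\widehat{F}_{\AT_{1,2,3}}$ via the pairing theorem for polygons \cite[Proposition 5.35]{LOT2}, identify each of the four piece-contributions using results already established in the excerpt, and then assemble them to match the explicit composition formula for basic morphisms recorded just before Subsection 4.1.

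By the triangle pairing theorem applied across the three gluings, $\widehat{F}_{\AT_{1,2,3}}$ expands as a sum over matched Reeb-chord asymptotics of products of rigid triangle counts in the four pieces $\AT(\mathcal{Z})$, $\mathcal{H}_1^+$, $\mathcal{H}_2^+$, and $\mathcal{H}_3^+$. Lemma \ref{TriangleLemma} forces all rigid index-zero triangles in $\AT(\mathcal{Z})$ with the cyclic corner ordering $(\gamma\cap\delta,\delta\cap\varepsilon,\gamma\cap\varepsilon)$ to be provincial, and the Reeb matching then forces all four contributions to be provincial as well. The provincial triangle count in $\AT(\mathcal{Z})$ is the multiplication on $\mathcal{A}(-\mathcal{Z})$ by \cite[Proposition 4.8]{Auroux2010} (together with the sign-convention corollary above).

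For $i=1,3$ the 1-handle maps insert $\Theta^{\mathrm{top}}_{\beta_i^0\beta_i^1}$, so the relevant provincial triangle count in $\mathcal{H}_i^+$ is precisely the map $\widehat{F}^{\mathrm{top}}_{\eta_i\beta_i^0\beta_i^1}$ of Lemma \ref{PerturbationLemma}, which is homotopic to the nearest-point isomorphism and therefore acts as the identity under the tautological identification $\bm{u}^0\leftrightarrow\bm{u}^1$ of generators on either side of the small Hamiltonian perturbation. On $\mathcal{H}_2^+$ the 3-handle map extracts the $\Theta^{\mathrm{bot}}_{\beta_2^0\beta_2^1}$ coefficient, so the combined $\AT(\mathcal{Z})\cup_\delta\mathcal{H}_2^+$ triangle count with this corner datum is, by definition, the map $F_{\delta,\delta}$ of Proposition \ref{FdeltadeltaProp}, which is homotopic to
\begin{align*}
\rho\boxtimes\bm{u}^0\otimes\overline{\bm{v}}^1\boxtimes\sigma\;\longmapsto\;\rho\,\overline{\bm{v}}^1(\bm{u}^1)\,\sigma.
\end{align*}

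Splicing the three identifications back together, $\widehat{G}_{\AT}$ sends a tensor product of basic morphisms
\begin{align*}
(\overline{\bm{u}}\boxtimes\rho\boxtimes\bm{v})\otimes(\overline{\bm{v}}'\boxtimes\sigma\boxtimes\bm{w})\;\longmapsto\;\overline{\bm{u}}\boxtimes\rho\,\overline{\bm{v}}'(\bm{v})\,\sigma\boxtimes\bm{w},
\end{align*}
which is exactly the formula for $g\circ f$ derived from the box-tensor model of $\Mor^{\mathcal{A}}$. The primary obstacle is the bookkeeping for the threefold application of the pairing theorem: one has to verify that admissibility and the chosen admissible collection of almost complex structures extend simultaneously across the three gluings, that Reeb-chord matching across all three boundary components really does collapse to the provincial sector (this is where Lemma \ref{TriangleLemma} is indispensable), and that the idempotent factors produced by the nearest-point maps on $\mathcal{H}_1^+$ and $\mathcal{H}_3^+$ slot in on either side of the idempotent $\iota_{\bm{v}}$ coming from the evaluation on $\mathcal{H}_2^+$, so that the result is the single algebra element $\rho\,\overline{\bm{v}}'(\bm{v})\,\sigma$ rather than a product with stray idempotent factors.
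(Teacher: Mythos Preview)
Your proposal is correct and follows essentially the same route as the paper: apply the pairing theorem for triangles to decompose $\widehat{G}_{\AT}$, use Lemma~\ref{PerturbationLemma} to identify the $\mathcal{H}_1^+$ and $\mathcal{H}_3^+$ contributions with nearest-point maps, and use Proposition~\ref{FdeltadeltaProp} for the middle piece. The paper phrases the decomposition as a three-fold box product $\widehat{F}_{\gamma\beta_1^1\beta_1^0}^{\mathrm{top}}\boxtimes F_{\delta,\delta}\boxtimes\widehat{F}_{\varepsilon\beta_3^0\beta_3^1}^{\mathrm{top}}$ (splitting only at the $\gamma$- and $\varepsilon$-boundaries, with $F_{\delta,\delta}$ already packaging $\AT\cup_\delta\mathcal{H}_2^+$), whereas you split at all three boundaries and then invoke Lemma~\ref{TriangleLemma} to collapse to provincial contributions before recombining; this is a cosmetic difference.

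One small imprecision worth flagging: the map $\widehat{F}^{\mathrm{top}}_{\eta_i\beta_i^0\beta_i^1}$ of Lemma~\ref{PerturbationLemma} is the \emph{full} type-$D$ triangle map (including Reeb-chord outputs), not the provincial part. In the paper's three-piece box decomposition this is exactly the object that appears, and Lemma~\ref{PerturbationLemma} applies directly. In your four-piece picture the object that appears is the provincial (idempotent-valued) summand, which is the map the paper calls $R$ in the proof of Proposition~\ref{FdeltadeltaProp}; it is not a priori a type-$D$ morphism on its own, so saying it is ``homotopic to the nearest-point isomorphism'' requires the same filtration/neck-stretching argument used there rather than Lemma~\ref{PerturbationLemma} per se. This does not affect the outcome, since both $R$ and $\widehat{F}^{\mathrm{top}}$ agree with $\Psi_{0\to1}$ up to homotopy, but the cleanest fix is simply to adopt the paper's three-piece box product so that the outer factors are genuine type-$D$ morphisms and Lemma~\ref{PerturbationLemma} applies verbatim.
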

	\begin{proof}
		We assume that each of the bordered Heegaard triples $\mathcal{H}_i^+=(\overline{\Sigma}_i,\bm{\eta},\bm{\beta}_i^0,\bm{\beta}_i^1)$ are obtained by suitable small Hamiltonian perturbations so that Lemma \ref{PerturbationLemma} applies. By construction and the pairing theorem for triangles \cite[Proposition 3.35]{LOT2}, we have a decomposition $\widehat{G}_{\AT}\simeq\widehat{F}_{\gamma\beta_1^1\beta_1^0}^{\mathrm{top}}\boxtimes F_{\delta,\delta}\boxtimes\widehat{F}_{\varepsilon\beta_3^0\beta_3^1}^{\mathrm{top}}$ under the identifications $\Mor^{\mathcal{A}}(Y_i,Y_j)\cong\overline{\widehat{\CFD}(Y_i)}\boxtimes\mathcal{A}\boxtimes\widehat{\CFD}(Y_j)$.	Since the maps $\widehat{F}_{\gamma\beta_1^1\beta_1^0}^{\mathrm{top}}$ and $\widehat{F}_{\varepsilon\beta_3^0\beta_3^1}^{\mathrm{top}}$ are homotopic to the corresponding nearest point maps, Proposition \ref{FdeltadeltaProp} then tells us that $\widehat{G}_{\AT}$ is homotopic to the map given on basic morphisms by
		\begin{align}
			(\overline{\bm{t}}^1\boxtimes\rho\boxtimes\bm{u}^0)\otimes(\overline{\bm{v}}^1\boxtimes\sigma\boxtimes\bm{w}^0)\mapsto\overline{\bm{t}}^0\boxtimes\rho\overline{\bm{v}}^1(\bm{u}^1)\sigma\boxtimes\bm{w}^1,
		\end{align}
		which is precisely the composition map.
	\end{proof}
	\begin{corollary}
		Suppose that $\mathcal{H}_1$ and $\mathcal{H}_1'$ are bordered Heegaard diagrams for a bordered 3-manifold $Y_1$ differing by a single bordered Heegaard move, then the square
		\begin{align}
			\begin{tikzcd}[ampersand replacement=\&,column sep=1.5cm]
				\Mor^{\mathcal{A}}(\mathcal{H}_1,\mathcal{H}_2)\otimes\Mor^{\mathcal{A}}(\mathcal{H}_2,\mathcal{H}_3)\arrow[d,"\simeq"']\arrow[r,"f\otimes g\mapsto g\circ f"] \& \Mor^{\mathcal{A}}(\mathcal{H}_1,\mathcal{H}_3)\arrow[d,"\simeq"]\\
				\Mor^{\mathcal{A}}(\mathcal{H}_1',\mathcal{H}_2)\otimes\Mor^{\mathcal{A}}(\mathcal{H}_2,\mathcal{H}_3)\arrow[r,"f\otimes g\mapsto g\circ f"] \& \Mor^{\mathcal{A}}(\mathcal{H}_1',\mathcal{H}_3)
			\end{tikzcd}
		\end{align}
		commutes up to homotopy, where the vertical maps are given by the homotopy equivalences $\Mor^{\mathcal{A}}(\mathcal{H}_1,\mathcal{H}_i)\to\Mor^{\mathcal{A}}(\mathcal{H}_1',\mathcal{H}_i)$ induced by the Heegaard move. The analogous statement also holds for $\mathcal{H}_2$ and $\mathcal{H}_3$.
	\end{corollary}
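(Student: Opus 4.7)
The plan is to pass from the algebraic statement about composition to a geometric statement about holomorphic triangle counts via Theorem \ref{AT-Comp}, and then appeal to the standard associativity of pseudoholomorphic polygon maps. The first step is to apply Theorem \ref{AT-Comp} to both rows: this replaces each horizontal composition map, up to homotopy, by the geometric triangle-counting map $\widehat{G}_{\AT}$ built from the triples $\AT(\mathcal{H}_1,\mathcal{H}_2,\mathcal{H}_3)$ and $\AT(\mathcal{H}_1',\mathcal{H}_2,\mathcal{H}_3)$, respectively. After this reduction, it suffices to verify that these two geometric composition maps intertwine the vertical equivalences up to chain homotopy.

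Next, I would realize the Heegaard move $\mathcal{H}_1 \leadsto \mathcal{H}_1'$ geometrically as a bordered Heegaard triple $\mathcal{T}_1$ (a handleslide, isotopy, or stabilization diagram, depending on the move), whose associated triangle-counting map is a quasi-isomorphism $\Phi:\widehat{\CFD}(\mathcal{H}_1)\to\widehat{\CFD}(\mathcal{H}_1')$. Via the pairing theorem this $\Phi$ induces precisely the vertical equivalence on $\Mor^{\mathcal{A}}$ in question, so both paths around the square can be realized by holomorphic configurations in the bordered multidiagram obtained by gluing $\mathcal{T}_1$ to $\AT(\mathcal{H}_1,\mathcal{H}_2,\mathcal{H}_3)$ along the shared $\bm{\gamma}$-boundary (equivalently, to $\AT(\mathcal{H}_1',\mathcal{H}_2,\mathcal{H}_3)$ along the corresponding boundary).

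The chain homotopy between the two paths would then come from counting rigid pseudoholomorphic quadrilaterals in this glued multidiagram: the codimension-one degenerations of the index-one quadrilateral moduli space break into pairs of triangles, and the two boundary strata that survive after accounting for provincial degenerations correspond precisely to the two ways of traversing the square. This is an instance of the pairing theorem for holomorphic polygons in the bordered setting, \cite[Proposition 4.29]{LOT2}. The main technical obstacle I anticipate is an analog of Lemma \ref{TriangleLemma} for the glued multidiagram: one must rule out quadrilateral domains that hit $\partial\AT$ in index zero, which I expect can be controlled by an Euler-measure computation analogous to the one in Lemma \ref{TriangleLemma}, combined with the small-Hamiltonian perturbation argument of Proposition \ref{FdeltadeltaProp}, so that the remaining count factors through the desired algebraic composition and gives the required chain homotopy.
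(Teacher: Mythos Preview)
Your approach for finger moves and handleslides is essentially the paper's: reduce both rows to triangle counts via Theorem \ref{AT-Comp}, realize the move by a triple, and invoke associativity of holomorphic polygon maps. The paper's proof says exactly this in one line. Your anticipated ``technical obstacle'' about quadrilaterals meeting $\partial\AT$ is a red herring: once $\mathcal{T}_1$ is glued in, the relevant associativity takes place in a \emph{closed} Heegaard multidiagram (the quadruple underlying $\AT_{1,2,3}$ together with the extra set of curves from $\mathcal{T}_1$), so ordinary associativity of triangle maps applies and no boundary analysis beyond what is already in Theorem \ref{AT-Comp} is needed.

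The genuine gap is your treatment of stabilization. You write that the move can be realized as ``a handleslide, isotopy, or stabilization diagram'' $\mathcal{T}_1$, but a stabilization changes the genus of the Heegaard surface, so there is no single bordered Heegaard triple on a common $\overline{\Sigma}$ whose triangle map realizes the stabilization equivalence $\widehat{\CFD}(\mathcal{H}_1)\simeq\widehat{\CFD}(\mathcal{H}_1')$. The quadrilateral associativity argument therefore does not apply to this case as stated. The paper handles stabilization separately: after composing with further finger moves and handleslides (to which your argument does apply), one may assume the stabilization occurs in a neighborhood of the basepoint, and there the vertical maps are genuine isomorphisms of complexes rather than merely homotopy equivalences, so the square commutes on the nose. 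You should split off the stabilization case in the same way.
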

	\begin{proof}
		In the case of finger moves and handleslides, this follows from Theorem \ref{AT-Comp} by associativity of triangle counts. In the case of stabilizations, up to some number of finger moves and handleslides, one may assume that the stabilization is performed in a neighborhood of the basepoint, in which case the vertical maps are isomorphisms.
	\end{proof}
	\section{4-manifolds with corners from bordered Heegaard triples}
	Just as one may represent a 4-manifold with boundary by a closed Heegaard triple and bordered 3-manifolds may be represented using (arced) bordered Heegaard diagrams \cite{Lipshitz2018}, we may describe 4-manifolds with boundary and corners using a suitable amalgamation of the two notions.
	\begin{definition}
		A genus $g$ \emph{arced bordered Heegaard triple} with $B$ boundary components is a quintuple $\mathcal{H}=(\overline{\Sigma},\bm{\gamma},\bm{\delta},\bm{\varepsilon},\bm{z})$, where:
		\begin{itemize}
			\item  $\overline{\Sigma}$ is a compact connected surface of genus $g$ with boundary components $\partial_1\overline{\Sigma},\dots,\partial_B\overline{\Sigma}$
			\item each $\bm{\eta}\in\{\bm{\alpha},\bm{\beta},\bm{\gamma}\}$ is a pairwise disjoint collection
			\begin{align*}
				\bm{\eta}=\{\eta_1^c,\dots,\eta_{g-T_\eta}\}\cup\bigcup\limits_{i=1}^B\{\eta_1^i,\dots,\eta_{2t_i^\eta}^i\},
			\end{align*}
			where $T_\eta=\sum\limits_{i=1}^Bt_i^\eta$, consisting of embedded arcs $\eta_j^i$ in $\overline{\Sigma}$ with boundary on $\partial_i\Sigma$ and circles $\eta^c_k$ in the interior of $\overline{\Sigma}$. We further impose the condition that if $t_i^\eta\neq 0$, then $t_i^\theta=0$ for $\bm{\theta}\neq\bm{\eta}$. In other words, this condition says that no two collections of curves meet the same boundary component nontrivially. For the sake of convenience, we denote the collection $\{\eta_1^c,\dots,\eta_{g-T_\eta}^c\}$ by $\bm{\eta}^c$ and the collections $\{\eta_1^i,\dots,\eta_{2t_i^\eta}^i\}$ by $\bm{\eta}^i$.
			\item $\bm{z}=(z;s_1,\dots,s_b)$ consists of an interior point $z\in\overline{\Sigma}$ disjoint from $\bm{\gamma}\cup\bm{\delta}\cup\bm{\varepsilon}$ together with embedded arcs $s_i$ in $\overline{\Sigma}\smallsetminus(\bm{\gamma}\cup\bm{\delta}\cup\bm{\varepsilon})$ connecting $z$ and $\partial_i\overline{\Sigma}$.
		\end{itemize}
		We also require that each of $\overline{\Sigma}\smallsetminus\bm{\gamma}$, $\overline{\Sigma}\smallsetminus\bm{\delta}$, and $\overline{\Sigma}\smallsetminus\bm{\varepsilon}$ is connected and that the collections $\bm{\gamma}$, $\bm{\delta},$ and $\bm{\varepsilon}$ intersect pairwise transversely. Lastly, we require that each component of $\partial\overline{\Sigma}$ is met by some $\bm{\eta}$. If $\bm{\eta}^i$ is the collection of arcs meeting $\partial_i\Sigma$ nontrivially, we will denote the induced (as in Lemma 4.4 of \cite{Lipshitz2018}) pointed matched circle by $\mathcal{Z}_i(\mathcal{H})$ or simply by $\mathcal{Z}_i$ when there is no risk of ambiguity.
	\end{definition}
	\begin{figure}
		\begin{center}
			\includegraphics[scale=1]{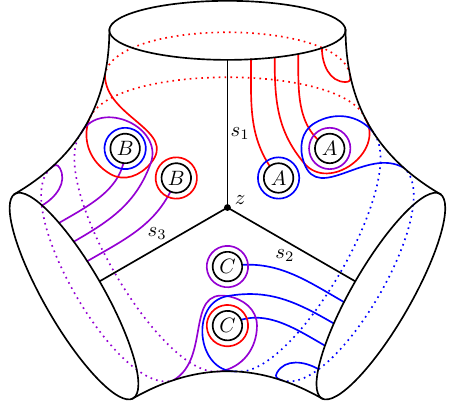}
		\end{center}
		\caption{A genus 3 bordered Heegaard triple $\mathcal{H}$ with three boundary components.}
		\label{drill1}
	\end{figure}
	Note that, for any two distinct collections $\bm{\eta},\bm{\theta}\in\{\bm{\alpha},\bm{\beta},\bm{\gamma}\}$, forgetting the third collection, filling in the now-empty boundary components with disks, and forgetting the arcs $s_{i_1},\dots,s_{i_f}$ which meet the filled boundary components, yields an arced bordered Heegaard diagram $\mathcal{H}^{\eta,\theta}=(\overline{\Sigma}_{\eta,\theta},\bm{\eta},\bm{\theta},\bm{z}_{\eta,\theta})$. Such a diagram determines a (strongly bordered) 3-manifold $Y_{\eta\theta}=Y(\mathcal{H}^{\eta,\theta})$ with $B-f$ boundary components by attaching 2-handles to $\overline{\Sigma}_{\eta,\theta}\times[0,1]$, analogous to \cite[Constructions 5.3 and 5.6]{Lipshitz2015}. From an arced bordered Heegaard triple $\mathcal{H}$, we will define a 4-manifold $X(\mathcal{H})$ with connected boundary and corners.
	\begin{remark}
		One could more generally allow bordered Heegaard triples $\mathcal{H}$ whose arcs connect multiple boundary components, in which case $\partial X(\mathcal{H})$ is a bordered sutured 3-manifold with corners following constructions analogous to those given by Zarev in \cite{Zarev2011}. However, we will not explore this construction here; we content ourselves to only consider the case $B\leq 3$.
	\end{remark}
	
	In addition to $Y_{\eta\theta}$, the arced bordered Heegaard diagram $\mathcal{H}^{\eta,\theta}$ specifies preferred disks $\Delta_j\subset\partial_jY_{\eta\theta}$, which are obtained as the images in $Y_{\eta\theta}$ of the ``faces'' of the 2-handles attached in the last step of the above construction, points $z_j\in\partial\Delta_j$ coming from the endpoints of $\bm{z}_{\eta,\theta}$, and homeomorphisms of triples $\phi_i:(F(\mathcal{Z}_j),D_j,z_j)\to(\partial_jY_{\eta\theta},\Delta_j,z_j)$ for each $j\neq i_1,\dots,i_f$, and an isotopy class $\nu_{\eta,\theta}$ of nowhere vanishing normal vector fields to $\bm{z}_{\eta,\theta}$ pointing into $\Delta_j$ at $z_j$. The data $(Y_{\eta\theta},\bm{\phi}_{\eta,\theta},\nu_{\eta,\theta})$, where $\bm{\phi}_{\eta,\theta}=\{\phi_j\}$ (note that this collection includes the data of the preferred disks and basepoints), is called the \emph{strongly bordered} 3-manifold associated to $\mathcal{H}^{\eta,\theta}$. We will abbreviate this data as $Y_{\eta\theta}$.
	
	\begin{construction}\label{CobConstruction}
		Let $\mathcal{H}=(\overline{\Sigma},\bm{\gamma},\bm{\delta},\bm{\varepsilon},\bm{z})$ be an an (arced) bordered Heegaard triple. For $\bm{\eta}\in\{\bm{\gamma},\bm{\delta},\bm{\varepsilon}\}$ meeting the boundary, construct a \emph{cornered handlebody} $\overline{U}_\eta$ as follows. Let $\overline{U}_0=\overline{\Sigma}\times[0,1]$ and let $\mathring{F}_\eta=F(\mathcal{Z}_\eta)\smallsetminus\mathrm{int}(D^2_\eta)$, where $D^2_\eta$ is the disk with $\partial D_\eta^2=Z_\eta$ used to construct $F(\mathcal{Z}_\eta)$ from the pointed matched circle $\mathcal{Z}_\eta=(Z_\eta,\bm{a}_\eta,M_\eta)$. Choose a closed collar neighborhood $[-\varepsilon,0]\times Z_\eta$ of $Z_\eta\subset\overline{\Sigma}$ such that $\{0\}\times Z_\eta$ is identified with $Z_\eta$ as in the following schematic figure.
		\begin{center}
			\includegraphics[scale=1]{./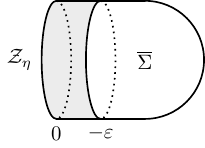}
		\end{center}
		Next, choose a closed tubular neighborhood $Z_\eta\times[0,1]$ of $Z_\eta$ in $ \mathring{F}_\eta$ and glue $\overline{U}_0$ to $[-\varepsilon,0]\times \mathring{F}_\eta$ by identifying the subsets $([-\varepsilon,0]\times Z_\eta)\times[0,1]\subset\overline{\Sigma}\times[0,1]$ and $[-\varepsilon,0]\times(Z_\eta\times[0,1])\subset[-\varepsilon,0]\times F_\eta$ as in
		\begin{center}
			\includegraphics[scale=1]{./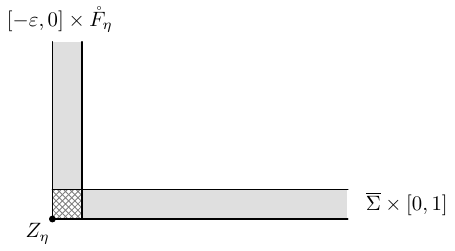}
		\end{center}
		and, similarly, attach a copy of $[-\varepsilon,0]\times D^2$ at each boundary component not met by $\bm{\eta}$ to obtain a new cornered 3-manifold $\overline{U}_1$ with two cornered boundary components, both of which are of the form $\Sigma_\eta:=\mathring{F}_\eta\cup_{\eta}\overline{\Sigma}\cup_\partial D^2\cup_\partial\stackrel{B-1}{\cdots}\cup_\partial D^2$, where $B=\#\pi_0(\partial\overline{\Sigma})$ and each surface in this union is glued to $\overline{\Sigma}$ at a 90 degree angle. For $\bm{\eta}$ not meeting any boundary component, instead attach a copy of $[-\varepsilon,0]\times D^2$ in this manner at each boundary component --- in this case, the resulting cornered 3-manifold has boundary components of the form $\Sigma_\varnothing:=\overline{\Sigma}\cup_\partial D^2\cup_\partial\stackrel{B}{\cdots}\cup_\partial D^2$. Now attach 3-dimensional 2-handles to the $\eta$-circles $\eta_i^c\times\{0\}\subset\overline{\Sigma}\times[0,1]$ as in
		\begin{center}
			\includegraphics[scale=1]{./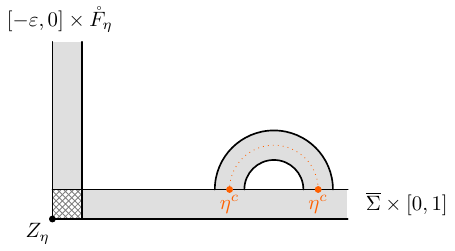}
		\end{center}
		to obtain a new 3-manifold $\overline{U}_2$ with two boundary components: a copy of $\Sigma_\eta$ or $\Sigma_\varnothing$ meeting $\overline{\Sigma}\times\{1\}$ and a genus $2k_\eta$ surface $S_\eta$, where $4k_\eta$ is the number of points in the boundary pointed matched circle corresponding to $\bm{\eta}$ (which is zero if $\bm{\eta}$ does not meet the boundary), which meets $\overline{\Sigma}\times\{0\}$.
		\begin{figure}
			\begin{center}
				\includegraphics[scale=1]{./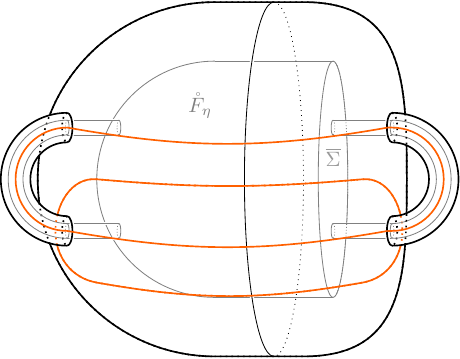}
			\end{center}
			\caption{A genus 1 example of a $\overline{U}_2$ in the case that $\bm{\eta}$ does meet the boundary.}
			\label{U1}
		\end{figure}
		\begin{figure}
			\begin{center}
				\includegraphics[scale=1]{./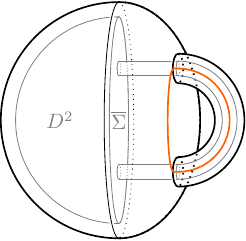}
			\end{center}
			\caption{A genus 1 example of a $\overline{U}_2$ in the case that $\bm{\eta}$ does not meet the boundary.}
			\label{U0}
		\end{figure}
		Next, if $\bm{\eta}$ meets the boundary, join each $\eta$-arc $\eta_i^a\times\{0\}\subset S_\eta$ to the core of the corresponding handle in $\{-\varepsilon\}\times\mathring{F}_\eta$ to obtain a collection of closed curves and attach a 3-dimensional 2-handle along each as in the following figure. If $\bm{\eta}$ does not meet the boundary, instead go on immediately to the next step.
		\begin{center}
			\includegraphics[scale=1]{./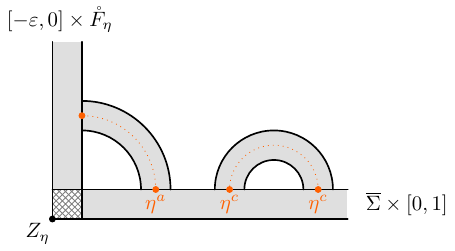}
		\end{center}
		This has the effect of replacing the boundary component $S_\eta$ with an $S^2$ boundary component. We then define $\overline{U}_\eta$ to be the result of filling this boundary component with a 3-ball as in
		\begin{center}
			\includegraphics[scale=1]{./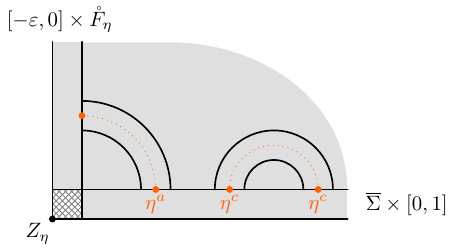}
		\end{center}
		--- the resulting space is a 3-manifold with boundary and corners, whose boundary stratum is $\partial_1\overline{U}_\eta=\Sigma_\eta$ or $\partial_1\overline{U}_\eta=\Sigma_\varnothing$, depending on whether or not $\bm{\eta}$ meets the boundary, and whose corner stratum is of the form $\partial_2\overline{U}_\eta=S^1\sqcup\stackrel{B}{\cdots}\sqcup S^1$. We then define a cornered 4-manifold $X(\mathcal{H})$ by
		\begin{align}
			X(\mathcal{H})=(\overline{\Sigma}\times\triangle)\cup_{\overline{\Sigma}\times e_\gamma}(\overline{U}_\gamma\times e_\gamma)\cup_{\overline{\Sigma}\times e_\delta}(\overline{U}_\delta\times e_\delta)\cup_{\overline{\Sigma}\times e_\varepsilon}(\overline{U}_\varepsilon\times e_\varepsilon),
		\end{align}
		where $\triangle$ is a triangle with edges labeled clockwise as $e_\gamma$, $e_\delta$, and $e_\varepsilon$, smoothing corners between the $\overline{U}_\eta$'s at the vertices $\overline{\Sigma}\times(e_\eta\cap e_\theta)$. Note that the boundary stratum $\partial_1 X(\mathcal{H})$ is connected and consists of the following two pieces. First, it contains each of the bordered 3-manifolds $Y_{\eta\theta}=Y(\mathcal{H}_{\eta\theta})$, where the diagrams $\mathcal{H}_{\eta\theta}=(\overline{\Sigma},\bm{\eta},\bm{\theta},z)$ are the bordered Heegaard diagrams obtained from $\mathcal{H}$ by deleting one of the collections of curves and filling the corresponding boundary component with a disk. Second, if $\bm{\theta}_1$ and $\bm{\theta}_2$ are the collections of curves not meeting the $\eta$-boundary, it contains a copy of
		\begin{align}
			\mathrm{Facet}_\eta:=S^1\times\triangle\cup_{S^1\times e_\eta}(\mathring{F}_\eta\times e_\eta)\cup_{S^1\times e_{\theta_1}}(D^2\times e_{\theta_1})\cup_{S^1\times e_{\theta_2}}(D^2\times e_{\theta_2}),
		\end{align}
		and there is one such ``facet'' for each $\bm{\eta}$ meeting $\partial\overline{\Sigma}$. These two distinguished parts of the boundary stratum meet in two copies of $F(\mathcal{Z}_\eta)$ and one copy of $S^2$. The union of these surfaces over all $\bm{\eta}$ meeting $\partial\overline{\Sigma}$ forms the corner stratum $\partial_2 X(\mathcal{H})$.
	\end{construction}
	In the single boundary component case, one may think of $X(\mathcal{H})$ schematically as in the following figure, which represents the $\delta$-bordered case.
	\begin{center}
		\includegraphics[scale=1]{./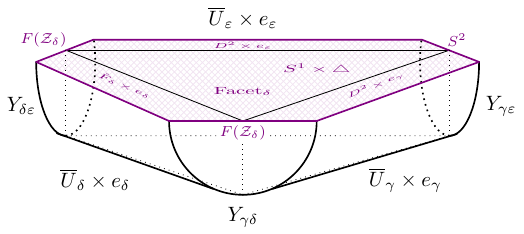}
	\end{center}
	However, this representation of $X(\mathcal{H})$ may be somewhat misleading: topologically, the space $\mathrm{Facet}_\eta$ is a closed 3-dimensional regular neighborhood of the singular surface $\mathring{F}_\eta\cup_\partial D^2\cup_\partial D^2$
	\begin{center}
		\includegraphics[scale=1]{./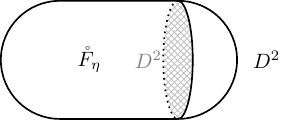}
	\end{center}
	--- i.e. $\mathrm{Facet}_\eta$ is a 3-dimensional pair of pants cobordism $-F(\mathcal{Z}_\eta)\sqcup F(\mathcal{Z}_\eta)\to S^2$. To see this, note that $\mathrm{Facet}_\eta$ is the result of gluing $\mathring{F}_\eta\times[0,1]$ and two copies of $D^2\times[0,1]$ to $S^1\times\triangle$ by identifying each of $\partial\mathring{F}_\eta\times[0,1]$ and the two copies of $\partial D^2\times[0,1]$ with one of $S^1\times e_\gamma$, $S^1\times e_\delta$, and $S^1\times e_\varepsilon$ so that $\partial\mathring{F}_\eta\times\{\frac{1}{2}\}$ and the two copies of $\partial D^2\times\{\frac{1}{2}\}$ are identified with the circles $S^1\times\{\mathrm{midpoint}\}$ depicted in the following figure and smoothing corners.
	\begin{center}
		\includegraphics[scale=1]{./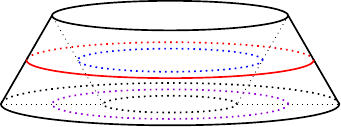}
	\end{center}
	\begin{remark}
		More generally, an arced bordered Heegaard $n$-tuple
		\begin{align}
			\mathcal{H}=(\overline{\Sigma}_g,\bm{\eta}_0,\dots,\bm{\eta}_{n-1},\bm{z})
		\end{align}
		with $B$ boundary components determines a cornered 4-manifold $X(\mathcal{H})$ whose boundary stratum consists of the bordered 3-manifolds $Y_{\eta_i\eta_{i+1}}$, with indices taken modulo $n$, together with facets $\mathrm{Facet}_{\eta_i}$ for each $i$ for which $\bm{\eta}_i$ intersects $\partial\overline{\Sigma}_g$ nontrivially. The constructions of $X(\mathcal{H})$ and the facets $\mathrm{Facet}_{\eta_i}$ are identical to the $n=3$ case except that we replace the triangle $\triangle$ with a planar $n$-gon.
	\end{remark}
	\subsection{Gluing}
	Let $\mathcal{H}=(\overline{\Sigma}_g,\bm{\gamma},\bm{\delta},\bm{\varepsilon},\bm{z})$ be an arced bordered Heegaard triple with three boundary components and let $\mathcal{H}_1=(\overline{\Sigma}_{g_1},\bm{\gamma}_1,\bm{\delta}_1,z_1)$, $\mathcal{H}_2=(\overline{\Sigma}_{g_2},\bm{\delta}_2,\bm{\varepsilon}_2,z_2)$, and $\mathcal{H}_3=(\overline{\Sigma}_{g_3},\bm{\varepsilon}_3,\bm{\gamma}_3,z_3)$ be $\gamma$-, $\delta$-, and $\varepsilon$-bordered Heegaard diagrams, respectively. Let $\mathcal{H}_{1,2,3}=\mathcal{H}\cup_\gamma\mathcal{H}_1^+\cup_\delta\mathcal{H}_2^+\cup_\varepsilon\mathcal{H}_3^+$ be the ordinary Heegaard triple that results from doubling the collections of curves in the $\mathcal{H}_i$ not meeting the boundary, labeling the new circles according to whichever label does not appear in $\mathcal{H}_i$, and gluing them to the corresponding boundary components of $\mathcal{H}$, as we did in the construction of $\AT_{1,2,3}$.
	\begin{proposition}\label{Gluing}
		If $\mathcal{H}_1$ and $\mathcal{H}_2$ are bordered Heegaard triples sharing a common boundary matching and $\mathcal{H}_2$ has one boundary component, then there is a diffeomorphism $X(\mathcal{H}_1\cup_{\partial}\mathcal{H}_2)\cong X(\mathcal{H}_1)\cup_{\mathrm{Facet}}X(\mathcal{H}_2)$, where $\mathrm{Facet}$ is the corresponding boundary facet. In particular, the 4-manifold $X(\mathcal{H}_{1,2,3})$ is diffeomorphic to
		\begin{align}
			X(\mathcal{H})\cup_{\mathrm{Facet}_\gamma}X(\mathcal{H}_1^+)\cup_{\mathrm{Facet}_\delta}X(\mathcal{H}_2^+)\cup_{\mathrm{Facet}_\varepsilon}X(\mathcal{H}_3^+).
		\end{align}
	\end{proposition}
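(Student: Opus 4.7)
The second assertion follows from three applications of the first --- one at each boundary component of $\AT$ --- because each $\mathcal{H}_i^+$ has a single boundary component, so I focus on the first statement. Write $\mathcal{H}=\mathcal{H}_1\cup_\partial\mathcal{H}_2$, let $\Gamma\cong S^1$ be the glued boundary circle with pointed matched circle $\mathcal{Z}_\eta$, let $\bm{\eta}\in\{\bm{\gamma},\bm{\delta},\bm{\varepsilon}\}$ be the collection of arcs meeting $\Gamma$, and let $\bm{\theta}$, $\bm{\vartheta}$ denote the other two collections. The plan is to compare Construction \ref{CobConstruction} applied directly to $\mathcal{H}$ with the gluing $X(\mathcal{H}_1)\cup_{\mathrm{Facet}_\eta}X(\mathcal{H}_2)$ piece-by-piece, exploiting the locality of the construction near each boundary component of the Heegaard surface.

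The central product piece decomposes trivially: since $\overline{\Sigma}(\mathcal{H})=\overline{\Sigma}(\mathcal{H}_1)\cup_\Gamma\overline{\Sigma}(\mathcal{H}_2)$,
\begin{align*}
\overline{\Sigma}(\mathcal{H})\times\triangle=\bigl(\overline{\Sigma}(\mathcal{H}_1)\times\triangle\bigr)\cup_{\Gamma\times\triangle}\bigl(\overline{\Sigma}(\mathcal{H}_2)\times\triangle\bigr),
\end{align*}
with matching collar neighborhoods on either side. For $\bm{\zeta}\in\{\bm{\theta},\bm{\vartheta}\}$, both $\overline{U}_\zeta(\mathcal{H}_1)$ and $\overline{U}_\zeta(\mathcal{H}_2)$ carry a filling disk $[-\varepsilon,0]\times D^2$ attached at the $\Gamma$-component of the boundary, and the corresponding piece of $\mathrm{Facet}_\eta$ contributes $D^2\times e_\zeta$; under the facet identification, these pieces assemble to close off the disk filling, leaving precisely $\overline{U}_\zeta(\mathcal{H})$, in which no such filling appears because $\Gamma$ is no longer a boundary component of $\overline{\Sigma}(\mathcal{H})$.

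The essential content is the analysis of the $\bm{\eta}$-handlebodies. In $\mathcal{H}$, the $\bm{\eta}$-arcs of $\mathcal{H}_1$ and $\mathcal{H}_2$ meeting $\Gamma$ concatenate at the matched endpoints on $\Gamma$; Construction \ref{CobConstruction} attaches 2-handles along the resulting curves, handling any arcs still ending on remaining boundary components of $\mathcal{H}_1$ through the appropriate $\mathring{F}$ pieces. On the other side, $\overline{U}_\eta(\mathcal{H}_i)$ is built by joining each $\eta$-arc of $\mathcal{H}_i$ meeting $\Gamma$ to a core of a 3-dimensional handle in its copy of $\mathring{F}_\eta$ and attaching 2-handles along the resulting closed curves. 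The facet identification glues the two copies of $\mathring{F}_\eta\times e_\eta$ together by the matching on $\mathcal{Z}_\eta$, identifying the cores on the two sides. The net effect is to join the half-curves from $\mathcal{H}_1$ and $\mathcal{H}_2$ through these shared cores, producing closed curves isotopic inside the glued manifold to those used in the direct construction for $\mathcal{H}$. Since 2-handle attachments depend only on isotopy classes and the final 3-ball fillings are local to each handlebody, the two constructions produce diffeomorphic 4-manifolds.

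The main obstacle is bookkeeping for the corner structure and the matching of collar neighborhoods: one must specify compatible local models for the collars $[-\varepsilon,0]\times Z_\eta$, for the corner between $\overline{\Sigma}(\mathcal{H}_i)\times\triangle$ and $\overline{U}_\eta(\mathcal{H}_i)\times e_\eta$, and for the smoothings at the triangle vertices $\overline{\Sigma}\times(e_\eta\cap e_\zeta)$, so that the piecewise comparison yields a well-defined diffeomorphism of manifolds with corners. With these local identifications fixed, the four pieces of the decomposition match up pairwise by the analyses above, and the global diffeomorphism assembles accordingly; the second statement then follows by iterating the first three times, once at each facet of $\AT$.
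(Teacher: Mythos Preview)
Your argument is correct and follows essentially the same piece-by-piece strategy as the paper: decompose $\overline{\Sigma}(\mathcal{H})\times\triangle$ along $\Gamma\times\triangle$, glue the $\bm{\eta}$-handlebodies along their $\mathring{F}_\eta$-boundaries by matching the 2-handles attached to arcs joined through the 1-handle cores, and glue the remaining handlebodies along their disk fillings. You supply somewhat more detail on the collar and corner bookkeeping than the paper does, but the underlying approach is identical.
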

	\begin{proof}
		Suppose that $\mathcal{H}_2$ is an $\eta$-bordered Heegaard diagram. The effect of gluing $\mathcal{H}_2$ to the $\eta$-boundary of $\mathcal{H}_1$ is as follows. First, the underlying surface $\overline{\Sigma}_g$ is replaced by $\overline{\Sigma}_g\cup_\eta\overline{\Sigma}_{g_2}$ which has the effect of gluing $\overline{\Sigma}_g\times\triangle$ to $\overline{\Sigma}_{g_2}\times\triangle$ in the obvious manner. Second, gluing the $\eta$-arcs which meet the boundary along their common endpoints corresponds to gluing the 3-dimensional 2-handles along the corresponding cores of the 1-handles in $\mathring{F}_\eta$ determined by the arcs. This has the effect of gluing the respective $\eta$-handlebodies along their $\mathring{F}_\eta$-boundaries. Lastly, for $\theta\neq\eta$, the respective $\theta$-handlebodies are glued along their disk boundaries. It is straightforward to see that these glued handlebodies are precisely the handlebodies obtained from the above construction using the glued diagram so this proves the result.
	\end{proof}
	\begin{figure}
		\begin{center}
			\includegraphics[scale=1.5]{./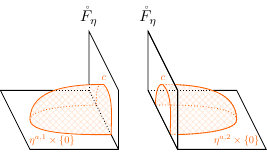}
			\includegraphics[scale=1.5]{./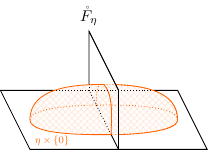}
		\end{center}
		\caption{The effect of gluing bordered Heegaard triples on the 2-handles attached to matched pairs of curves of the form $\eta^a\cup_\partial c$, where $c$ is the core of a 1-handle in $\mathring{F}_\eta$.}
		\label{ArcGluing}
	\end{figure}
	\begin{corollary}\label{PantsCorollary}
		The 4-manifold $X(\AT_{1,2,3})$ is diffeomorphic to the composition $W_2^{13,g_2}\circ W\circ(W_{-2}^{12,g_3}\sqcup W_{-2}^{23,g_1})$ of the pair of pants cobordism $W:Y_{12}\sqcup Y_{23}\to Y_{13}$ with the cobordisms $W_2^{ij,g_k}:Y_{ij}\to Y_{ij}\#(S^2\times S^1)^{g_k}$ obtained by surgery on 0-framed $g_k$-component unlinks in $Y_{ij}$ and their reverses $W_{-2}^{ij,g_k}:Y_{ij}\#(S^2\times S^1)^{g_k}\to Y_{ij}$. Thus, if $W_1^{ij,g}:Y_{ij}\to Y_{ij}\#(S^2\times S^1)^{\# g}$ and $W_3^{ij,g}:Y_{ij}\#(S^2\times S^1)^{\# g}\to Y_{ij}$ are the usual 1-handle and 3-handle cobordisms, then $W_3^{13,g_2}\circ X(\AT_{1,2,3})\circ(W_1^{12,g_3}\sqcup W_1^{23,g_1})$ is diffeomorphic to $W$.
	\end{corollary}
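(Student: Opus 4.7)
The plan is to apply Proposition \ref{Gluing} to decompose
\[
X(\AT_{1,2,3}) \cong X(\AT(\mathcal{Z})) \cup_{\mathrm{Facet}_\gamma} X(\mathcal{H}_1^+) \cup_{\mathrm{Facet}_\delta} X(\mathcal{H}_2^+) \cup_{\mathrm{Facet}_\varepsilon} X(\mathcal{H}_3^+),
\]
and then identify each of the four pieces together with how they fit along the facets.

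First I would identify $X(\AT(\mathcal{Z}))$ with the cornered 4-manifold $\triangle \times F(\mathcal{Z})$. Since the three collections of curves in $\AT(\mathcal{Z})$ consist only of arcs, each cornered handlebody $\overline{U}_\eta$ in Construction \ref{CobConstruction} is built from only the arc-plus-core 2-handles, which serve to cap off the $\mathring{F}_\eta$-side of $\overline{\Sigma}_{\AT} \times [0,1]$ so as to realize $F(\mathcal{Z})$. Assembling these with $\overline{\Sigma}_{\AT} \times \triangle \cong (F(\mathcal{Z}) \smallsetminus 3D^2) \times \triangle$ along the three edges recovers $\triangle \times F(\mathcal{Z})$ after corner smoothing; correspondingly, the three bordered 3-manifolds $Y_{\eta\theta}$ on the boundary are each copies of the trivial cobordism $F(\mathcal{Z}) \times I$.

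Next I would identify each $X(\mathcal{H}_i^+)$ as a cornered version of the standard ``doubled-$\beta$'' Heegaard triple cobordism. Its boundary stratum consists of two bordered copies of $Y_i$ (from the $(\bm{\eta}_i, \bm{\beta}_i^0)$- and $(\bm{\eta}_i, \bm{\beta}_i^1)$-pairings, which are isotopic via the Hamiltonian perturbation) and the closed piece $(\#^{g_i} S^2 \times S^1) \smallsetminus B^3$ (from the $(\bm{\beta}_i^0, \bm{\beta}_i^1)$-pairing), together with one bordered facet $\mathrm{Facet}_{\eta_i}$. A direct handle-decomposition analysis, mirroring the non-bordered case, shows $X(\mathcal{H}_i^+)$ is the trace of a 1-handle cobordism $Y_i \sqcup (\#^{g_i} S^2 \times S^1) \to Y_i \#(S^2 \times S^1)^{\# g_i}$ followed by $g_i$ 2-handle cancellations returning to $Y_i$.

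Finally, assembling: the facet $\mathrm{Facet}_{\eta_i}$ has boundary $F(\mathcal{Z}) \sqcup F(\mathcal{Z}) \sqcup S^2$, with the two $F$-pieces at the two vertices of $\triangle$ adjacent to $e_{\eta_i}$ and the $S^2$-piece at the vertex opposite $e_{\eta_i}$. Gluing $X(\mathcal{H}_i^+)$ to $X(\AT(\mathcal{Z}))$ along this facet identifies the two $F$-pieces with the $F$-boundaries of the two $Y_i$-pieces of $X(\mathcal{H}_i^+)$ and the $S^2$-piece with the sphere boundary of $(\#^{g_i} S^2 \times S^1) \smallsetminus B^3$. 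This replaces $e_{\eta_i} \times F(\mathcal{Z})$ in $\triangle \times F(\mathcal{Z})$ by $e_{\eta_i} \times Y_i$, producing the $e_i \times Y_i$-piece of $W$, while simultaneously performing a connect sum of the 3-manifold $Y_{jk}$ at the opposite vertex (where $\{j,k\} = \{1,2,3\} \smallsetminus \{i\}$) with $\#^{g_i} S^2 \times S^1$. Doing this for each $i$ yields $W$ with $Y_{12}, Y_{23}, Y_{13}$ stabilized by $\#^{g_3}, \#^{g_1}, \#^{g_2}$ respectively, which is exactly $W_2^{13, g_2} \circ W \circ (W_{-2}^{12, g_3} \sqcup W_{-2}^{23, g_1})$. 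The main obstacle is the careful topological identification in the second and third steps --- in particular, verifying that $X(\mathcal{H}_i^+)$ realizes the bordered destabilization cobordism and that gluing at $\mathrm{Facet}_{\eta_i}$ correctly implements the connect sum with the opposite-vertex 3-manifold via the $S^2$-corner of the facet.
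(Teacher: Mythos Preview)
Your strategy is the same as the paper's: apply Proposition~\ref{Gluing} to split $X(\AT_{1,2,3})$ into $X(\AT)$ and the three $X(\mathcal{H}_i^+)$'s, identify each piece, and reassemble. Your description of how the $S^2$-corner of each facet implements a connect sum with $\#^{g_i}S^2\times S^1$ at the opposite vertex is exactly the mechanism producing the $W_{\pm 2}$ factors, and is more explicit than what the paper writes down.

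Two points of comparison are worth noting. First, your identification $X(\AT(\mathcal{Z}))\cong\triangle\times F(\mathcal{Z})$ is stated a bit loosely: the cornered boundary of $X(\AT)$ genuinely contains the three facets as codimension-zero pieces, and the vertex pieces $Y_{\eta\theta}$ carry an extra $S^2$ corner (coming from the pair of disk-fillings at the third boundary circle) that is not visible in the na\"ive boundary $\partial\triangle\times F$. You implicitly use this $S^2$ in your assembly step, so the argument is internally consistent, but the literal claim ``$X(\AT)=\triangle\times F$ as cornered $4$-manifolds'' would need to be qualified. The paper sidesteps this by never computing $X(\AT)$ on its own; it only records that $\overline{\Sigma}_{\AT}\cong F(\mathcal{Z})\smallsetminus 3D^2$ and that each two-curve reduction gives $F(\mathcal{Z})\times I$, and lets the gluing do the rest.

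Second, for the identification of $X(\mathcal{H}_i^+)$ the paper takes a cleaner route than a direct handle analysis: it invokes the closed case (Ozsv\'ath--Szab\'o's description of the doubled-$\beta$ triple as $Y'\times[-1,1]$ minus a handlebody neighborhood) and then uses Proposition~\ref{Gluing} in reverse to extract the bordered statement. This avoids reinventing the handle decomposition in the cornered setting and immediately gives $X(\mathcal{H}_i^+)$ as the complement of $\overline{U}_\beta\times\{0\}$ in $Y_i\times[-1,1]$, i.e.\ a cobordism of pairs $(-Y_i\sqcup Y_i,-F\sqcup F)\to((S^2\times S^1)^{\#g_i}\smallsetminus B^3,S^2)$.
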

	\begin{proof}
		Suppose that $\overline{\mathcal{H}}=(\overline{\Sigma}_g,\bm{\alpha},\bm{\beta},z)$ is an $\alpha$-bordered Heegaard triple and let $Y=Y(\overline{\mathcal{H}})$ be the corresponding bordered 3-manifold. We claim that the cornered 4-manifold $X=X(\overline{\mathcal{H}}^+)$ determined by the triple $\mathcal{H}^+=(\overline{\Sigma},\bm{\alpha},\bm{\beta}^0,\bm{\beta}^1,z)$ obtained by doubling $\bm{\beta}$ is diffeomorphic to the cobordism of pairs
		\begin{align}
			(-Y\sqcup Y,-\partial Y\sqcup\partial Y)\to((S^2\times S^1)^{\# g}\setminus B^3,S^2)
		\end{align}
		given by the complement of a regular neighborhood of the cornered handlebody $\overline{U}_\beta\times\{0\}$ in $Y\times[-1,1]$. To see this, recall from \cite[Proposition 4.3]{Ozsvath2006} that if $\mathcal{H}'=(\Sigma_0,\bm{\alpha}',\bm{\beta}',z)$ is any Heegaard diagram for a closed 3-manifold $Y'$ and $(\Sigma_0,\bm{\alpha}',\bm{\beta}',\bm{\gamma},z)$ is such that $\bm{\gamma}$ is obtained by a small Hamiltonian translation of $\bm{\beta}'$, then the 4-manifold $X_{\alpha'\beta'\gamma}$ determined by this diagram is diffeomorphic to $Y'\times[-1,1]$ with a regular neighborhood of the handlebody $U_{\beta'}\times\{0\}$ deleted, i.e. the cobordism obtained by attaching 2-handles to a 0-framed unlink in a Euclidean ball in $Y'$. In particular, this is the case if $\mathcal{H}'=\mathcal{H}_0\cup_\partial\mathcal{H}$ for some other $\alpha$-bordered Heegaard diagram $\mathcal{H}_0$. The claim then follows from the previous proposition.
		
		The first statement now follows from Proposition \ref{Gluing} together with the observation that the surface underlying the triple $\AT$ is naturally identified with $F(\mathcal{Z})$ with three disks removed and the fact that deleting any pair of curves from $\AT$ determines a bordered Heegaard diagram for $F(\mathcal{Z})\times I$ after filling the now-empty boundary component with a disk. The second statement then follows from the fact that the 2- and 3-handles in $W_{3}^{13,g_2}\circ W_{2}^{13,g_2}$ and the 1- and 2-handles in $(W_{-2}^{12,g_3}\sqcup W_{-2}^{23,g_1})\circ(W_1^{12,g_3}\sqcup W_1^{23,g_1})$ cancel.
	\end{proof}
	Another way of thinking about these results is as follows. Given a closed 3-manifold $Y$, we have two distinct ways of decomposing $Y$ into 3-manifolds with boundary: we can either decompose $Y$ as $Y=U_\alpha\cup_\Sigma U_\beta$, where $U_\alpha$ and $U_\beta$ are handlebodies glued along a Heegaard surface $\Sigma$, or we can decompose it as $Y=-Y_1\cup_{F(\mathcal{Z})}Y_2$, where $Y_1$ and $Y_2$ are \emph{bordered} 3-manifolds which both have boundary parameterized by the same surface $F(\mathcal{Z})$.
	\begin{figure}
		\begin{center}
			\includegraphics[scale=1]{./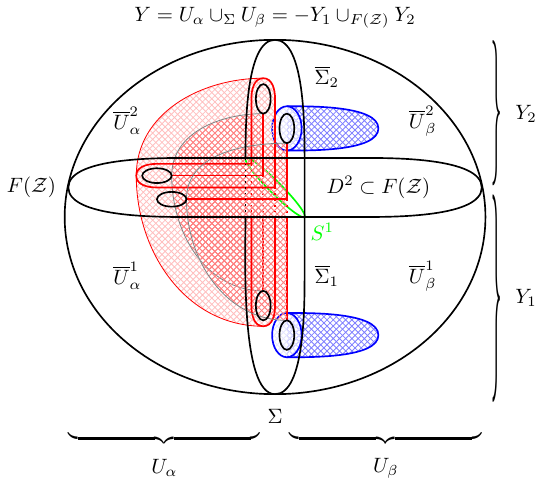}
		\end{center}
		\caption{Splitting a closed 3-manifold into two handlebodies along a Heegaard surface $\Sigma$ and into two bordered 3-manifolds along a surface $F(\mathcal{Z})$ transverse to the original. In each half-surface, the two small black circles are identified and, hence, such a pair represents a handle.}
		\label{Split}
	\end{figure}
	\begin{figure}
		\begin{center}
			\includegraphics[scale=1]{./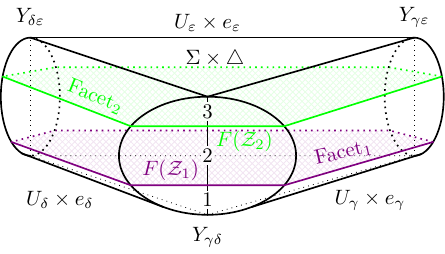}
		\end{center}
		\caption{Slicing a 4-manifold with boundary obtained from a closed Heegaard triple $\mathcal{H}=(\Sigma,\bm{\gamma},\bm{\delta},\bm{\varepsilon},\bm{z})$ along two facets. In this schematic example, the Heegaard surface $\Sigma$ decomposes as $\Sigma=\overline{\Sigma}_1\cup\overline{\Sigma}_2\cup\overline{\Sigma}_3$ so each of the 3-manifolds $Y_{\eta\theta}$ decomposes into bordered 3-manifolds as $Y_{\eta\theta}=Y_{\eta\theta}^1\cup_{F_1}Y_{\eta\theta}^2\cup_{F_2}Y_{\eta\theta}^3$ and each handlebody $U_\eta$ decomposes into cornered handlebodies as $U_\eta=\overline{U}_\eta^1\cup_{F_1\cap U_\eta}\overline{U}_\eta^2\cup_{F_2\cap U_\eta}\overline{U}_\eta^3$.}
	\end{figure}
	Here, we have chosen this second splitting to be one obtained by cutting a closed Heegaard diagram $(\Sigma,\bm{\alpha},\bm{\beta},z)$ for the decomposition $Y=U_\alpha\cup_\Sigma U_\beta$ along some circle which intersects one of the pairs of curves, giving us two bordered Heegaard diagrams with the same pointed matched circle $\mathcal{Z}$. In this case, the copy of the surface $F(\mathcal{Z})$ sitting inside of $Y$ meets $\Sigma$ transversely in a single separating copy of $S^1$. Therefore, each $Y_i$ decomposes as a union of two cornered handlebodies $Y_i=\overline{U}_\alpha^i\cup_{\Sigma\cap Y_i}\overline{U}_\beta^i$ and each handlebody $U_\eta$ decomposes similarly as $U_\eta=\overline{U}_\eta^1\cup_{F(\mathcal{Z})\cap U_\eta}\overline{U}_\eta^2$. This allows us to decompose $Y$ into four ``quadrants'' which are compatible with the (restrictions of) the gluings in both decompositions of $Y$ (cf. Figure \ref{Split}). These quadrants are precisely the cornered handlebodies from Construction \ref{CobConstruction}. If we had instead started with a closed Heegaard triple $\mathcal{H}=(\Sigma,\bm{\gamma},\bm{\delta},\bm{\varepsilon},\bm{z})$, separated $\Sigma$ along a circle intersecting exactly one of the sets of curves to obtain a decomposition $\Sigma=\overline{\Sigma}_1\cup_\partial\overline{\Sigma}_2$, and glued the cornered handlebodies meeting $\overline{\Sigma}_i$ to $\overline{\Sigma}_i\times\triangle$ to obtain $X(\mathcal{H}_i)$, then the complement of the bordered 3-manifolds $Y_{\eta\theta}$ in $\partial_1 X(\mathcal{H}_i)$ is precisely the interior of a facet so gluing $X(\mathcal{H}_1)$ and $X(\mathcal{H}_2)$ along their respective boundary facets yields the original 4-manifold $X(\mathcal{H})$.
	\section{The main theorem}
	In \cite{Zemke2021,Zemke2021b}, Zemke extends the minus and hat versions of Heegaard Floer homology to give monoidal functors out of the monoidal category of (multi)-pointed 3-manifolds and cobordisms between them equipped with embedded ribbon graphs connecting the basepoints. Given a closed Heegaard triple $(\Sigma,\bm{\gamma},\bm{\delta},\bm{\varepsilon},\bm{z})$, let $X_{\gamma\delta\varepsilon}$ be the smooth 4-manifold with boundary $\partial X_{\gamma\delta\varepsilon}=-Y_{\gamma\delta}\sqcup-Y_{\delta\varepsilon}\sqcup Y_{\gamma\varepsilon}$ defined by
	\begin{align}
		X_{\gamma\delta\varepsilon}=(\Sigma\times\triangle)\cup_{\Sigma\times e_\gamma}(U_\gamma\times e_\gamma)\cup_{\Sigma\times e_\delta}(U_\delta\times e_\delta)\cup_{\Sigma\times e_\varepsilon}(U_\varepsilon\times e_\varepsilon),
	\end{align}
	i.e. the pair of pants cobordism, as in \cite[Section 8]{OzsSz2004}. In \cite[Section 9]{Zemke2021}, Zemke endows $X_{\gamma\delta\varepsilon}$ with an embedded trivalent graph $\Gamma_{\gamma\delta\varepsilon}$ as follows: let $v_0\in\triangle$ be an interior point and define $\Gamma_0\subset\triangle$ by attaching a straight line segment extending radially from $v_0$ to each of the three vertices of the triangle. Then one defines $\Gamma_{\gamma\delta\varepsilon}:=\bm{z}\times\Gamma_0$ and gives this graph a ribbon structure by cyclically ordering the edges by endowing the ends of $X_{\gamma\delta\varepsilon}$ with the cyclic order $(-Y_{\gamma\delta},-Y_{\delta\varepsilon},Y_{\gamma\varepsilon})$.
	\begin{theorem}[{\cite[Theorem 9.1]{Zemke2021}}]
		Suppose that $(\Sigma,\bm{\gamma},\bm{\delta},\bm{\varepsilon},\bm{z})$ is a closed pointed Heegaard triple. Let
		\begin{align}
			(X_{\gamma\delta\varepsilon},\Gamma_{\gamma\delta\varepsilon}):(Y_{\gamma\delta}\sqcup Y_{\delta\varepsilon},\bm{z}\sqcup\bm{z})\to (Y_{\gamma\varepsilon},\bm{z})
		\end{align}
		be the ribbon graph cobordism described above. Then, if $\mathfrak{s}\in\mathit{Spin}^c(X_{\gamma\delta\varepsilon})$, the graph cobordism map
		\begin{align*}
			F^B_{X_{\gamma\delta\varepsilon},\Gamma_{\gamma\delta\varepsilon},\mathfrak{s}}:\CF^-(\Sigma,\bm{\gamma},\bm{\delta};\mathfrak{s}|_{Y_{\gamma\delta}})\otimes_{\F[U]} \CF^-(\Sigma,\bm{\delta},\bm{\varepsilon};\mathfrak{s}|_{Y_{\delta\varepsilon}})\to\CF^-(\Sigma,\bm{\gamma},\bm{\varepsilon};\mathfrak{s}|_{Y_{\gamma\varepsilon}})
		\end{align*}
		is chain homotopic to the holomorphic triangle map $F^-_{\alpha,\beta,\gamma,\mathfrak{s}}$.
	\end{theorem}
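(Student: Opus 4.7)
The plan is to reduce the statement to elementary cobordism pieces by using a handle decomposition of $X_{\gamma\delta\varepsilon}$ adapted to the Heegaard triple, and then to compare the two maps piece-by-piece. Recall that $X_{\gamma\delta\varepsilon}$ admits a standard handle decomposition built from the Heegaard triple data: thicken $\Sigma$ to $\Sigma\times\triangle$; attach 3-dimensional 2-handles along the $\gamma$-, $\delta$-, and $\varepsilon$-curves to obtain the three handlebodies $U_\gamma$, $U_\delta$, $U_\varepsilon$; and thicken these to 4-dimensional 1-, 2-, and 3-handles respectively. Correspondingly, the ribbon graph $\Gamma_{\gamma\delta\varepsilon}=\bm{z}\times\Gamma_0$ breaks into three edges meeting at a single interior trivalent vertex, and this vertex sits in the middle $\Sigma\times\triangle$ piece.

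First I would appeal to Zemke's axiomatic description of the graph cobordism map, which says $F^B_{X,\Gamma,\mathfrak{s}}$ is computed as the composition of elementary maps associated to: (i) attaching 1-, 2-, and 3-handles along spheres disjoint from the graph; (ii) the relative homology action along each edge of $\Gamma$; and (iii) the trivalent vertex relation, which expresses a trivalent vertex as a composition of an edge insertion, a connected sum (two-pointed relation), and a free-stabilization. Applied to our handle decomposition, the 1-handles between $Y_{\gamma\delta}$ and $Y_{\delta\varepsilon}$ correspond to forming a connected-sum complex; the 2-handles along the $\gamma$-, $\delta$-, $\varepsilon$-curves not used in $Y_{\gamma\delta}$, $Y_{\delta\varepsilon}$, $Y_{\gamma\varepsilon}$ correspond to surgery; and the 3-handles cap off the remaining $S^2\times S^1$ summands.

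Second I would match this composite with the triangle count. The holomorphic triangle map $F^-_{\alpha,\beta,\gamma,\mathfrak{s}}$ admits an analogous decomposition coming from Ozsv\'ath--Szab\'o's original description of $X_{\gamma\delta\varepsilon}$ as a composition of a 1-handle cobordism (forming $Y_{\gamma\delta}\#Y_{\delta\varepsilon}$), a 2-handle cobordism whose Floer map is itself a triangle count on a simpler triply-periodic diagram, and a 3-handle cobordism. The key identifications are then: the 1-handle/3-handle maps of Zemke's formalism agree on the nose with the $\Theta^{\mathrm{top}}$- and $\Theta^{\mathrm{bot}}$-insertion maps used in the triangle construction, while the 2-handle Floer map agrees with Zemke's 2-handle cobordism map by \cite{Zemke2021} (this is proved independently using a surgery-triangle argument on a standard diagram).

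The main obstacle will be handling the trivalent vertex: it is not itself a handle operation, and its cobordism map is defined via the free-stabilization/destabilization and basepoint-action machinery. Here I would isolate a neighborhood of the vertex inside $\Sigma\times\triangle$, show it is diffeomorphic to $S^3$ with a standard $Y$-graph, and verify that Zemke's assignment on this local model reproduces the idempotent that glues together the three edge contributions. After this local verification, the global claim follows by functoriality of both constructions together with an invariance argument showing that both sides are independent of the choices made in the handle decomposition, so they agree on the original cobordism $(X_{\gamma\delta\varepsilon},\Gamma_{\gamma\delta\varepsilon})$.
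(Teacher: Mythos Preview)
The paper does not prove this theorem; it is quoted verbatim as \cite[Theorem~9.1]{Zemke2021} and used as a black box, with no argument supplied in the present paper. Consequently there is no in-paper proof to compare your proposal against.

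As for the proposal itself, your outline is in the right spirit---Zemke's proof does proceed by decomposing the pair-of-pants cobordism into elementary pieces and identifying the graph cobordism map on each piece with the corresponding part of the triangle map---but your sketch is too schematic to count as a proof. Two points in particular are underspecified. First, your ``2-handle Floer map agrees with Zemke's 2-handle cobordism map by \cite{Zemke2021}'' is circular: that identification is one of the substantive ingredients of the theorem you are trying to prove, not something you can cite from it. Second, the treatment of the trivalent vertex is the heart of the matter, and ``verify that Zemke's assignment on this local model reproduces the idempotent'' hides a genuine computation involving free stabilizations, basepoint actions, and their interaction with the connected-sum isomorphism; this is where most of the work in Zemke's argument lies. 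If you want to actually reprove the result, you would need to carry out that local model computation explicitly and check compatibility with the $\mathit{Spin}^c$ decomposition.
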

	\begin{corollary}\label{ZemkeCorollary}
		The hat Heegaard Floer analogue of \cite[Theorem 9.1]{Zemke2021} holds.
	\end{corollary}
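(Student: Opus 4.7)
The plan is to deduce the hat version directly from Zemke's minus-version theorem by exploiting the naturality of both sides under the projection $\CF^-\to\widehat{\CF}$ obtained by setting $U=0$. Concretely, $\widehat{\CF}(\Sigma,\bm{\eta},\bm{\theta})\cong\CF^-(\Sigma,\bm{\eta},\bm{\theta})\otimes_{\F[U]}\F$, and under this identification the holomorphic triangle map $\widehat{f}_{\gamma\delta\varepsilon}$ is exactly the $U=0$ reduction of $f^-_{\gamma\delta\varepsilon}$, since the only difference is that one counts only triangles whose domains have multiplicity zero at the basepoint $z$.

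First, I would recall Zemke's definition of the hat graph cobordism map $\widehat{F}_{X,\Gamma,\mathfrak{s}}$ from \cite{Zemke2021}, which is built out of the same elementary pieces (1- and 3-handle maps, 2-handle maps, free-stabilization, and edge/vertex relations) as the minus version, but using the hat variants of each. In every case the hat piece is obtained from the minus piece by tensoring with $\F[U]/(U)$, so at the level of complexes there is a chain map $\pi:\CF^-\to\widehat{\CF}$ fitting into a commuting square
\begin{align*}
\begin{tikzcd}[ampersand replacement=\&]
\CF^-(Y_{\gamma\delta};\mathfrak{s}_1)\otimes_{\F[U]}\CF^-(Y_{\delta\varepsilon};\mathfrak{s}_2)\arrow[r,"F^-_{X,\Gamma,\mathfrak{s}}"]\arrow[d,"\pi\otimes\pi"']\&\CF^-(Y_{\gamma\varepsilon};\mathfrak{s}_3)\arrow[d,"\pi"]\\
\widehat{\CF}(Y_{\gamma\delta};\mathfrak{s}_1)\otimes_\F\widehat{\CF}(Y_{\delta\varepsilon};\mathfrak{s}_2)\arrow[r,"\widehat{F}_{X,\Gamma,\mathfrak{s}}"']\&\widehat{\CF}(Y_{\gamma\varepsilon};\mathfrak{s}_3)
\end{tikzcd}
\end{align*}
and similarly a commuting square with the minus and hat triangle maps on the horizontal arrows.

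Next, I would apply Zemke's Theorem 9.1 to produce a chain homotopy $H^-$ with $\partial H^-+H^-\partial=F^-_{X_{\gamma\delta\varepsilon},\Gamma_{\gamma\delta\varepsilon},\mathfrak{s}}-f^-_{\gamma\delta\varepsilon,\mathfrak{s}}$. Reducing modulo $U$ (i.e.\ applying $\pi$ to both sides) then produces a chain homotopy $\widehat{H}:=\pi\circ H^-$ realizing $\widehat{F}_{X_{\gamma\delta\varepsilon},\Gamma_{\gamma\delta\varepsilon},\mathfrak{s}}\simeq\widehat{f}_{\gamma\delta\varepsilon,\mathfrak{s}}$, which is the desired statement. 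Finally, summing over $\mathfrak{s}\in\mathit{Spin}^c(X_{\gamma\delta\varepsilon})$ (of which only finitely many contribute to the hat triangle map when the diagram is weakly admissible) yields the full hat analogue.

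The only real obstacle is verifying that Zemke's hat graph cobordism map really is the $U=0$ reduction of the minus one at the chain level, rather than merely at homology; this amounts to unpacking his definitions and checking that each of the building blocks (free stabilization, relative homology action, 1-/2-/3-handle maps, and the edge and vertex maps associated to $\Gamma$) is defined functorially in a way compatible with $\pi$. Since the constructions in \cite{Zemke2021,Zemke2021b} are given uniformly for both flavors and each elementary piece is manifestly $U$-equivariant, this compatibility is immediate, and no further holomorphic curve analysis is required.
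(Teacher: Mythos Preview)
Your proposal is correct and is precisely the standard ``set $U=0$'' argument that the paper leaves implicit: the paper states the corollary with no proof at all, treating it as immediate from Zemke's minus-version theorem. One minor notational point: writing $\widehat{H}:=\pi\circ H^-$ literally gives a map out of $\CF^-\otimes_{\F[U]}\CF^-$, not out of $\widehat{\CF}\otimes_\F\widehat{\CF}$; what you mean is the map induced on the quotient by the $\F[U]$-linearity of $H^-$, which is what your final paragraph in fact justifies.
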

	\begin{theorem}[{\cite[Theorem 1.2]{Zemke2021b}}]\label{ZemkeThm2}
		If $(W,\Gamma):(Y_0,\bm{p}_0)\to(Y_1,\bm{p}_1)$ is a graph cobordism, then the graph cobordism map $\widehat{F}_{W,\Gamma}:\widehat{\CF}(Y_0,\bm{p}_0)\to\widehat{\CF}(Y_1,\bm{p}_1)$ is functorial with respect to composition of cobordisms and if $\Gamma$ is a path connecting $\bm{p}_0$ to $\bm{p}_1$, then $\widehat{F}_{W,\Gamma}$ is homotopic to the cobordism map defined by Ozsv\'{a}th--Szab\'{o} in \textup{\cite{Ozsvath2006}}.
	\end{theorem}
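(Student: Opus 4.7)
The plan is to construct $\widehat{F}_{W,\Gamma}$ by decomposing $(W,\Gamma)$ into elementary pieces and proving that the resulting map is independent of the decomposition. First I would show, via a Cerf-theoretic analysis of generic Morse functions on $W$ together with the positioning of $\Gamma$, that any ribbon graph cobordism factors as a composition of (a) $k$-handle cobordisms ($k=1,2,3$) through which the graph passes as an embedded arc, and (b) cylindrical pieces $(Y\times I,\Gamma)$ in which $\Gamma$ implements a single elementary graph modification: adding or removing a free vertex, contracting or expanding an edge, sliding a vertex past another piece of the graph, or performing a ribbon reordering at a vertex. The same analysis shows that any two such decompositions of the same $(W,\Gamma)$ are related by a finite sequence of elementary moves.

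Next I would assign a chain map to each elementary piece. The $k$-handle cobordisms receive the usual Ozsv\'ath--Szab\'o maps: the 1-handle map given by tensoring with $\Theta^{\mathrm{top}}$, the 2-handle map given by a holomorphic triangle count, and the 3-handle map given by projecting onto $\Theta^{\mathrm{bot}}$. The cylindrical graph-modification pieces receive chain-level graph-action maps on $\widehat{\CF}$: free-stabilization maps $S^\pm_w$ for vertex addition and removal, a relative homology action $A_\lambda$ for each edge, and ribbon reordering maps at vertices. Each is defined in terms of standard chain-level models—nearest-point maps, explicit cycles in the Heegaard diagram representing relative first homology, and specific homotopies built from holomorphic disk counts.

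The main obstacle is verifying that the composite map depends only on $(W,\Gamma)$ up to homotopy. This reduces to a long list of chain-level commutation identities: handle maps must commute with one another when the attaching spheres are disjoint, handle maps must commute with the graph-action maps when their supports are disjoint, stabilization/handleslide invariance must hold for each piece separately, and the ribbon reordering maps must satisfy the expected vertex-relations. Each of these identities is proved by a pseudoholomorphic polygon-count argument or a neck-stretching degeneration, in the same spirit as the invariance proofs for $\widehat{\HF}$ itself and as Lemma~\ref{PerturbationLemma} and Proposition~\ref{FdeltadeltaProp} above; this is the bulk of the work and is essentially a long, careful enumeration of degenerations.

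Once invariance is established, functoriality with respect to composition is nearly immediate: given $(W_1,\Gamma_1)$ and $(W_2,\Gamma_2)$, I would choose a decomposition of the composite that factors through the gluing 3-manifold, so that the map for $(W_2\circ W_1,\Gamma_2\cup\Gamma_1)$ is definitionally the composition of the individual maps. For the agreement with the Ozsv\'ath--Szab\'o cobordism map when $\Gamma$ is a path connecting $\bm{p}_0$ to $\bm{p}_1$, I would choose a decomposition in which $\Gamma$ traverses each handle as a standard co-core arc and the only cylindrical pieces are trivial path-isotopies carrying the identity map. Then the only nontrivial elementary maps appearing in the composition are the three standard $k$-handle maps, so the resulting map agrees on the nose with the definition of $\widehat{F}_{W,\mathfrak{s}}$ in \cite{Ozsvath2006}.
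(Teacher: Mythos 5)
This statement is not proved in the paper at all: it is imported verbatim as \cite[Theorem 1.2]{Zemke2021b}, so there is no internal proof to compare your proposal against. The paper simply quotes Zemke's result and uses it (together with Corollary \ref{ZemkeCorollary} and the decomposition of the pair-of-pants into a connected-sum cobordism followed by a 2-handle cobordism) to deduce Theorem \ref{MainTheorem}.

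That said, your outline does track the broad architecture of Zemke's actual construction: decompose $(W,\Gamma)$ via a Cerf-theoretic argument into handle attachments and cylindrical pieces carrying elementary graph modifications; assign the standard $1$-, $2$-, $3$-handle maps to the former and free-stabilization maps $S_w^{\pm}$ and relative homology actions $A_\lambda$ to the latter; and prove independence of the decomposition. Two cautions. First, the entire mathematical content lives in the step you compress into one paragraph --- the ``long list of chain-level commutation identities.'' These are not routine variations on Lemma \ref{PerturbationLemma}; they include, e.g., the interaction of $A_\lambda$ with triangle maps, the relations $S_w^{\mp}S_w^{\pm}\simeq\mathrm{id}$ and $A_\lambda^2\simeq 0$ (in the minus version, $A_\lambda^2\simeq U$-type relations), and the verification that only the \emph{ribbon} structure, not the full embedding of $\Gamma$, matters. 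Asserting that they all follow from degeneration arguments is a statement of intent, not a proof. Second, your final step --- that for a path $\Gamma$ the cylindrical pieces ``carry the identity map'' --- itself requires an argument: one must show that the composite of the free-stabilization and relative-homology maps associated to an arc decomposed into subarcs is homotopic to the identity (equivalently, that the graph action of a boundary-parallel path is trivial), which is one of the relations in Zemke's calculus rather than a definitional triviality. So the proposal is a plausible roadmap to the cited theorem, but it is not a proof, and in the context of this paper none is expected.
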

	Note that the pair of pants cobordism with its embedded ribbon graph decomposes as $(X_{\gamma\delta\varepsilon},\Gamma_{\gamma\delta\varepsilon})=(W_1\cup_{Y_{12}\# Y_{23}}W_2,\Gamma_1\cup\Gamma_2)$, where $(W_1,\Gamma_1):Y_{12}\sqcup Y_{23}\to Y_{12}\# Y_{23}$ is the connected sum cobordism with an embedded trivalent graph $\Gamma_1$, and $(W_2,\Gamma_2):Y_{12}\# Y_{23}\to Y_{13}$ is the 2-handle cancellation cobordism equipped with an embedded path $\Gamma_2$ between basepoints. By \cite[Proposition 8.1]{Zemke2021}, the graph cobordism map $\widehat{F}_{W_1,\Gamma_1}:\widehat{\CF}(Y_{12})\otimes\widehat{\CF}(Y_{23})\to\widehat{\CF}(Y_{12}\# Y_{23})$ is homotopic to Ozsv\'{a}th--Szab\'{o}'s connected sum isomorphism. By the previous theorem, the map $\widehat{F}_{W_2,\Gamma_2}$ is homotopic to the map $\widehat{F}_{W_2}:\widehat{\CF}(Y_{12}\# Y_{23})\to\widehat{\CF}(Y_{13})$ defined by Ozsv\'{a}th--Szab\'{o} in \cite{Ozsvath2006}. With these facts in hand, we are now ready to prove Theorem \ref{MainTheorem}.
	\begin{theorem}[Theorem \ref{MainTheorem}]
		Let $Y_1$, $Y_2$, and $Y_3$ be bordered 3-manifolds, all of which have boundaries parameterized by the same surface $F$, and let $\mathcal{A}=\mathcal{A}(-F)$ be the algebra associated to $-F$. Let $Y_{ij}=-Y_i\cup_\partial Y_j$ and consider the pair of pants cobordism $W:Y_{12}\sqcup Y_{23}\to Y_{13}$. Then the composition map
		\begin{align}
			\Mor^{\mathcal{A}}(Y_1,Y_2)\otimes\Mor^{\mathcal{A}}(Y_2,Y_3)\to\Mor^{\mathcal{A}}(Y_1,Y_3)
		\end{align}
		given by $f\otimes g\mapsto g\circ f$ fits into a homotopy commutative square of the form
		\begin{align}
			\begin{tikzcd}[ampersand replacement=\&,column sep=1.5cm]
				\Mor^{\mathcal{A}}(Y_1,Y_2)\otimes\Mor^{\mathcal{A}}(Y_2,Y_3)\arrow[d,"\simeq"']\arrow[r,"f\otimes g\mapsto g\circ f"] \& \Mor^{\mathcal{A}}(Y_1,Y_3)\arrow[d,"\simeq"]\\
				\widehat{\CF}(Y_{12})\otimes\widehat{\CF}(Y_{23})\arrow[r,"\widehat{F}_W"]\&\widehat{\CF}(Y_{13})
			\end{tikzcd}
		\end{align}
		where $\widehat{F}_W$ is the map induced by $W$ and the vertical maps come from the pairing theorem of \textup{\cite{Lipshitz_2011}}.
	\end{theorem}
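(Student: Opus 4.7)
The plan is to combine Theorem \ref{AT-Comp}, Corollaries \ref{PantsCorollary} and \ref{ZemkeCorollary}, and the functoriality of Zemke's graph cobordism maps (Theorem \ref{ZemkeThm2}). By Theorem \ref{AT-Comp}, it suffices to show that $\widehat{G}_{\AT}$ agrees up to homotopy with $\widehat{F}_W$ under the pairing theorem identifications. Unpacking the definition of $\widehat{G}_{\AT}$ given in Theorem \ref{AT-Comp}, this reduces to showing that the triangle map $\widehat{F}_{\AT_{1,2,3}}$ sandwiched between the 1-handle maps $\bm{x} \mapsto \bm{x} \otimes \Theta^{\mathrm{top}}$ on the inputs and the 3-handle map on the output is homotopic to $\widehat{F}_W$.

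First I would apply Corollary \ref{ZemkeCorollary} to identify $\widehat{F}_{\AT_{1,2,3}}$, up to homotopy, with the graph cobordism map $\widehat{F}_{X(\AT_{1,2,3}),\Gamma}$ for the 4-manifold constructed in Section 5, equipped with its canonical tripod ribbon graph $\Gamma \subset \bm{z} \times \Gamma_0$ of the form introduced before Corollary \ref{ZemkeCorollary}. The 1-handle and 3-handle maps of \cite{Ozsvath2006} are identified by earlier results of Zemke with the graph cobordism maps $\widehat{F}_{W_1, \gamma_1}$ and $\widehat{F}_{W_3, \gamma_3}$ associated to the elementary 1- and 3-handle cobordisms carrying path-graph ribbon structures. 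Functoriality (Theorem \ref{ZemkeThm2}) then lets me rewrite the entire sandwich as the graph cobordism map $\widehat{F}_{W', \Gamma'}$ for the composite cobordism
\begin{align*}
W' = W_3^{13,g_2} \circ X(\AT_{1,2,3}) \circ \bigl(W_1^{12,g_3} \sqcup W_1^{23,g_1}\bigr)
\end{align*}
with the concatenated ribbon graph $\Gamma'$.

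By Corollary \ref{PantsCorollary}, the 2-handle cobordisms appearing in the decomposition of $X(\AT_{1,2,3})$ cancel topologically with the outer 1- and 3-handles, so $W' \cong W$; and the concatenation of a tripod in $X(\AT_{1,2,3})$ with three paths running through the cancellable handle regions is again a tripod, so $\Gamma'$ is isotopic (as a ribbon graph) to the standard tripod $\Gamma_W \subset W$ connecting the basepoints of $Y_{12}$, $Y_{23}$, and $Y_{13}$. Finally, to identify $\widehat{F}_{W, \Gamma_W}$ with the Ozsv\'ath--Szab\'o cobordism map $\widehat{F}_W$, I would invoke Corollary \ref{ZemkeCorollary} one more time, applied to any closed pointed Heegaard triple directly representing $(W, \Gamma_W)$ in the sense of \cite[Section 8]{OzsSz2004}: the holomorphic triangle map for such a triple is by definition $\widehat{F}_W$, and Zemke's theorem identifies it with $\widehat{F}_{W, \Gamma_W}$. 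The most delicate step will be verifying that the concatenated graph $\Gamma'$ really is isotopic as a ribbon graph (not merely as an abstract graph) to the standard tripod $\Gamma_W$, so that the two applications of Corollary \ref{ZemkeCorollary} produce the same map rather than maps differing by the action of a loop of framings; this should follow from a direct check using Construction \ref{CobConstruction} and the explicit form of $\Gamma$ along $\bm{z} \times \Gamma_0$, together with invariance of graph cobordism maps under isotopy of ribbon graphs.
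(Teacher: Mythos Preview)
Your proposal is correct and follows essentially the same route as the paper: reduce via Theorem \ref{AT-Comp} to showing $\widehat{G}_{\AT}\simeq\widehat{F}_W$, then use Corollary \ref{ZemkeCorollary} to turn the triangle map into a graph cobordism map, Corollary \ref{PantsCorollary} to identify the composite cobordism with $W$, and Theorem \ref{ZemkeThm2} for functoriality. The only minor divergence is in your final identification of $\widehat{F}_{W,\Gamma_W}$ with the Ozsv\'ath--Szab\'o map: rather than invoking Corollary \ref{ZemkeCorollary} again for a triple ``directly representing'' $W$ (which risks circularity, since $\widehat{F}_W$ for a cobordism with disconnected incoming end is not literally defined as a single triangle map in \cite{Ozsvath2006}), the paper decomposes $(W,\Gamma_W)$ as a connected-sum cobordism $(W_1,\Gamma_1)$ followed by a path-graph cobordism $(W_2,\Gamma_2)$, then appeals to \cite[Proposition 8.1]{Zemke2021} for the first piece and Theorem \ref{ZemkeThm2} for the second.
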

	\begin{proof}
		By Corollary \ref{ZemkeCorollary} and Theorem \ref{ZemkeThm2}, the maps $\widehat{G}_{\AT}$ and $\widehat{F}_W$ are homotopic. The result then follows from Theorem \ref{AT-Comp}.
	\end{proof}
	This immediately implies the following assertion of Lipshitz--Ozsv\'{a}th--Thurston in \cite[Section 1.5]{Lipshitz_2011}.
	\begin{corollary}
		The Yoneda composition map
		\begin{align}
			\Ext(Y_1,Y_2)\otimes_\F\Ext(Y_2,Y_3)\to\Ext(Y_1,Y_3),
		\end{align}
		where $\Ext(Y_i,Y_j):=\Ext(\widehat{\CFD}(Y_i),\widehat{\CFD}(Y_j))$, coincides with the map
		\begin{align}
			\widehat{\HF}(-Y_1\cup_\partial Y_2)\otimes_\F\widehat{\HF}(-Y_2\cup_\partial Y_3)\to\widehat{\HF}(-Y_1\cup_\partial Y_3)
		\end{align}
		induced by $W$.
	\end{corollary}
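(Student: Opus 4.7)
The plan is to deduce this corollary by simply passing to homology in the homotopy commutative diagram established in Theorem \ref{MainTheorem}. Since $\Ext(Y_i, Y_j) = H_*(\Mor^{\mathcal{A}}(\widehat{\CFD}(Y_i), \widehat{\CFD}(Y_j)))$ by definition of Ext for type-$D$ structures (equivalently, for the corresponding projective dg-modules), and since Yoneda composition on $\Ext$ is by construction the map on homology induced by the chain-level composition $f \otimes g \mapsto g \circ f$ on $\Mor$-complexes, the top row of the diagram in Theorem \ref{MainTheorem} descends on homology to precisely the Yoneda composition map.

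Next, I would observe that the vertical maps in Theorem \ref{MainTheorem} are chain homotopy equivalences coming from the pairing theorem \cite[Theorem 1]{Lipshitz_2011}, hence induce isomorphisms
\begin{align*}
    H_*(\Mor^{\mathcal{A}}(Y_i, Y_j)) \xrightarrow{\cong} \widehat{\HF}(-Y_i \cup_\partial Y_j)
\end{align*}
on homology. The bottom row $\widehat{f}_W$ of that diagram induces the cobordism map $\widehat{F}_W$ on $\widehat{\HF}$ by definition.

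Since the square in Theorem \ref{MainTheorem} commutes up to chain homotopy, taking homology yields a genuinely commutative square of $\F$-vector spaces, which is exactly the statement of the corollary. There is essentially no obstacle here beyond unwinding definitions; the only point worth flagging is the compatibility of the Yoneda product on $\Ext$ with the chain-level composition on $\Mor$, but this is standard and follows from the fact that $\widehat{\CFD}(Y_i)$ corresponds to a projective dg-module (resolving the first argument by itself), so $\Mor^{\mathcal{A}}(\widehat{\CFD}(Y_1), \widehat{\CFD}(Y_2))$ already computes $\Ext$ and composition of morphisms descends directly to the Yoneda product.
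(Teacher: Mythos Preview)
Your proposal is correct and matches the paper's approach: the paper states that this corollary is implied immediately by Theorem \ref{MainTheorem}, and your argument—passing to homology in the homotopy-commutative square—is exactly how that implication goes through. The only thing to add is that the paper gives no separate proof, so your unwinding of the definitions (Ext as homology of $\Mor$, Yoneda product as the induced map of chain-level composition, homotopy equivalences inducing isomorphisms) makes explicit what the paper leaves to the reader.
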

	\section{Application: an algorithm for computing $\widehat{F}_X$}
	As a consequence of Theorem \ref{MainTheorem}, we describe an algorithm for computing the morphism $\widehat{\HF}(Y_0)\to\widehat{\HF}(Y_1)$ associated to an arbitrary cobordism $X:Y_0\to Y_1$ between closed 3-manifolds. As in previous sections, we will abbreviate the notation for morphism spaces by omitting the symbols $\widehat{\CFD}$ and $\widehat{\CFDA}$: if $Y_0$ and $Y_1$ are 3-manifolds with boundary parametrized by $F(\mathcal{Z})$, then
	\begin{align}
		\Mor^{\mathcal{A}}(Y_0,Y_1):=\Mor^{\mathcal{A}(-\mathcal{Z})}(\widehat{\CFD}(Y_0),\widehat{\CFD}(Y_1))
	\end{align}
	and if $\varphi:F(\mathcal{Z})\to F(\mathcal{Z})$ is a diffeomorphism, then we define
	\begin{align}
		\Mor^{\mathcal{A}}(Y_0,\varphi\boxtimes Y_1):=\Mor^{\mathcal{A}(-\mathcal{Z})}(\widehat{\CFD}(Y_0),\widehat{\CFDA}(\varphi)\boxtimes\widehat{\CFD}(Y_1)),
	\end{align}
	where $\widehat{\CFDA}(\varphi)$ is the type-$\mathit{DA}$ bimodule of the mapping cylinder of $\varphi$. In \cite{Ozsvath2006}, Ozsv\'{a}th--Szab\'{o} define a map $\widehat{F}_X$ as follows: first decompose $X$ as $X=X_3\circ X_2\circ X_1$, where $X_1:Y_0\to Y_0'$ is a cobordism consisting entirely of 1-handles, $X_2:Y_0'\to Y_1'$ is a cobordism consisting of 2-handles, and $X_3:Y_1'\to Y_1$ is a cobordism consisting of 3-handles. They then define maps $\widehat{F}_{X_i}$, $i=1,2,3$, between the Floer complexes of the respective 3-manifolds associated to each type of handle, take $\widehat{F}_X=\widehat{F}_{X_3}\circ\widehat{F}_{X_2}\circ\widehat{F}_{X_1}$, and show that the resulting map on homology is well-defined and invariant under Kirby moves and, hence, is a 4-manifold invariant (see also \cite{Juhasz2021} and \cite{zemke2019graph}). The maps $\widehat{F}_{X_1}$ and $\widehat{F}_{X_3}$ are the same 1- and 3-handle maps described in Theorem \ref{AT-Comp}. We now describe the 2-handle map $\widehat{F}_{X_2}$. For notational simplicity, assume that $X$ is built entirely from 2-handles so that $\widehat{F}_{X}=\widehat{F}_{X_2}$. Then, $X$ is given by surgery on some framed link $\mathbb{L}\subset Y_0$. We recall the following definitions from \cite{Ozsvath2006}.
	\begin{definition}
		A \emph{bouquet} for $\mathbb{L}$ is an embedded 1-complex $B(\mathbb{L})\subset Y_0$ given by the union of $\mathbb{L}=K_1\cup\cdots\cup K_k$ with a collection of arcs connecting the link components $K_i$ to a fixed basepoint in $Y_0$.
	\end{definition}
	Fix a bouquet $B(\mathbb{L})$ for $\mathbb{L}$. Let $H_0$ be a regular neighborhood of $B(\mathbb{L})$, $F=\partial H_0$, and let $H_1=Y_0\setminus\mathrm{int}(H_0)$ be the complementary handlebody. Now define $H_0(\mathbb{L})$ to be the result of performing surgery on $\mathbb{L}\subset H_0$. Then $H_0(\mathbb{L})\cup_\partial H_1=Y_1$ and $H_0(\mathbb{L})\cup_\partial H_0\cong\#^{g(F)-k}S^2\times S^1$.
	\begin{definition}
		A Heegaard triple \emph{subordinate to the bouquet} $B(\mathbb{L})$ is a Heegaard triple $(\Sigma,\bm{\alpha},\bm{\beta},\bm{\gamma})$ such that
		\begin{enumerate}
			\item $(\Sigma,{\alpha_1,\dots,\alpha_g},{\beta_{k+1},\dots,\beta_g})$ is a diagram for the complement $H_1$ of the bouquet,
			\item $\gamma_{k+1},\dots,\gamma_g$ are small Hamiltonian translates of the $\beta_{k+1},\dots,\beta_g$,
			\item after surgering out the curves $\beta_{k+1},\dots,\beta_g$, the induced curves $\beta_i$ and $\gamma_i$, for $i=1,\dots,k$, lie in punctured tori $F_i\subset\partial H_1$ given by the boundaries of regular neighborhoods of the components $K_i$,
			\item the curves $\beta_i$, $i=1,\dots,k$, represent meridians of the $K_i$ and
			\begin{align*}
				\#(\beta_i\cap\gamma_j)=\begin{cases}
					0,\,\,\,i\neq j\\
					1,\,\,\,i=j
				\end{cases}
			\end{align*}
			with transverse intersection in the latter case,
			\item the curves $\gamma_i$, $i=1,\dots,k$, represent the framings of the components $K_i$ under the natural identification of $H_1(\partial\mathrm{nbd}(K_1\cup\cdots\cup K_k))$ with $H_1(\partial H_1)$.
		\end{enumerate}
	\end{definition}
	The 4-manifold $W$ specified by the triple $\mathcal{H}=(\Sigma,\bm{\alpha},\bm{\beta},\bm{\gamma})$ then has boundary components $-Y_0$, $\#^{g(F)-k}S^2\times S^1$, and $Y_1$ (cf. \cite[Proposition 4.3]{Ozsvath2006}) --- indeed $W$ is the pair of pants cobordism $Y_0\sqcup\#^{g(F)-k}S^2\times S^1\to Y_1$ --- and the map $\widehat{F}_X:\widehat{\CF}(Y_0)\to\widehat{\CF}(Y_1)$ is defined by taking $\widehat{F}_X(\bm{x})=\widehat{F}_{W}(\bm{x}\otimes\Theta^{\mathrm{top}})$, where the right-hand side is the holomorphic triangle counting map determined by $\mathcal{H}$, i.e. the pair of pants map for the handlebodies $H_0$, $H_0(\mathbb{L})$, and $H_1$. We may realize this construction in the morphism spaces formulation of Heegaard Floer homology as follows: suppose that $\theta^{\mathrm{top}}\in\Mor^{\mathcal{A}(-F)}(-H_0(\mathbb{L}),H_0)$ is a representative of the top-graded class in $\widehat{\HF}(\#^{g(F)-k}S^2\times S^1)$. Then, Theorem \ref{MainTheorem} tells us that there is a homotopy commutative square
	\begin{align}\label{square}
		\begin{tikzcd}[ampersand replacement=\&]
			\Mor^{\mathcal{A}(-F)}(-H_0,H_1)\arrow[r,"-\circ \theta^{\mathrm{top}}"]\&\Mor^{\mathcal{A}(-F)}(-H_0(\mathbb{L}),H_1)\arrow[d,"\simeq"]\\
			\widehat{\CF}(Y_0)\arrow[u,"\simeq"]\arrow[r,"\widehat{F}_X"]\&\widehat{\CF}(Y_1) 
		\end{tikzcd}
	\end{align}
	where the vertical arrows come from the pairing theorem of \cite{Lipshitz_2011}. An algorithm for computing $\widehat{\CFD}(H)$ for a handlebody $H$ was given by Lipshitz--Ozsv\'{a}th--Thurston in \cite{Lipshitz2014} (see also \cite{Zhan2016}).
	
	Now suppose that $X_1:Y_0\to Y_0'$ consists of a single 1-handle addition and let $\mathcal{A}_1=\mathcal{A}(-F)$. Then the map $\widehat{F}_{X_1}:\widehat{\CF}(Y_0)\to\widehat{\CF}(Y_0')$ can be computed by decomposing $Y_0'$ as $Y_0\#(S^2\times S^1)$, in which case $\widehat{F}_{X_1}(\bm{x})=\bm{x}\otimes\Theta^{\mathrm{top}}$. We now reinterpret this construction in the morphism spaces setting. If we take a Heegaard splitting $Y_0=H_0\cup_\varphi H_0$, where $H_0$ is a 0-framed handlebody of genus $g$ and $\varphi:\partial H_0\to\partial H_0$ is a diffeomorphism, then we automatically get a Heegaard splitting $Y_0'=H_0'\cup_{\varphi'}H_0'$, where $H_0'$ is the genus $g+1$ handlebody $H_0'=H_0\natural(D^2\times S^1)$ and $\varphi'=\varphi\#\mathrm{id}_{\mathbb{T}^2}$. This then gives us
	\begin{align}
		\widehat{\CF}(Y_0')&=\Mor^{\mathcal{A}_2}(-H_0',\varphi'\boxtimes H_0'),
	\end{align}
	where $\mathcal{A}_2=\mathcal{A}(F(-\mathcal{Z})\#\mathbb{T}^2)$, by the pairing theorem. If $\mathcal{H}_0$ is a bordered Heegaard diagram for $H_0$ and $\mathcal{H}_\varphi$ is an (arced) bordered Heegaard diagram for $\varphi$, we may obtain bordered Heegaard diagrams $\mathcal{H}_0'$ and $\mathcal{H}_{\varphi'}$ by appending a copy of the standard diagrams for $D^2\times S^1$ (with the 0-framing) and $\mathbb{T}^2\times[0,1]$ to $\mathcal{H}_0$ and $\mathcal{H}_\varphi$, respectively. This gives us isomorphisms\begin{align}
		\begin{split}
			\widehat{\CFD}(H_0')\cong\widehat{\CFD}(H_0)\otimes_\F\widehat{\CFD}(D^2\times S^1)
		\end{split}
	\end{align}
	and
	\begin{align}
		\begin{split}
			&\widehat{\CFDA}(\varphi')\boxtimes\widehat{\CFD}(H_0')\\&\cong(\widehat{\CFDA}(\varphi)\boxtimes\widehat{\CFD}(H_0))\otimes_\F(\widehat{\CFDA}(\mathbb{T}^2\times[0,1])\boxtimes\widehat{\CFD}(D^2\times S^1)).
		\end{split}
	\end{align}
	of $\mathcal{A}_2$-modules. Since $\mathbb{T}^2\times[0,1]\cup D^2\times S^1\cong D^2\times S^1$, by \cite[Lemma 4.2]{Hendricks2019}, there is an unique homogeneous homotopy equivalence
	\begin{align}
		h_1:\widehat{\CFD}(D^2\times S^1)\to\widehat{\CFDA}(\mathbb{T}^2\times[0,1])\boxtimes\widehat{\CFD}(D^2\times S^1).
	\end{align}
	\begin{figure}
		\begin{center}
			\includegraphics[scale=1]{./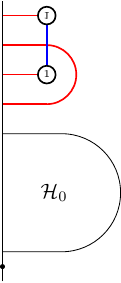}\hspace{0.75in}\includegraphics[scale=1]{./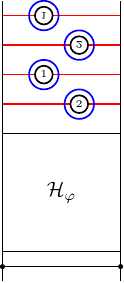}
		\end{center}
		\caption{Bordered Heegaard diagrams $\mathcal{H}_0'$ (left) and $\mathcal{H}_{\varphi'}$ (right) obtained by appending standard diagrams to $\mathcal{H}_0$ and $\mathcal{H}_\varphi$.}
		\label{Append}
	\end{figure}
	Now, the standard diagram for $D^2\times S^1$ with the 0-framing is
	\begin{center}
		\includegraphics[scale=1]{./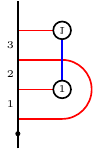}
	\end{center}
	which has one generator, $\bm{s}$, and supports a single disk
	\begin{center}
		\includegraphics[scale=1]{./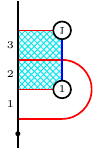}
	\end{center}
	with asymptotic condition $\rho_{23}\in\mathcal{A}(\mathbb{T}^2)$ so
	\begin{align}
		\widehat{\CFD}(D^2\times S^1)&=\begin{tikzcd}[ampersand replacement=\&]
			\bm{s}\arrow[loop right,looseness=8,out=35,in=-35,"\rho_{23}"]
		\end{tikzcd}
	\end{align}
	and, hence, $\widehat{\CF}(S^2\times S^1)\simeq\mathrm{End}^{\mathcal{A}(\mathbb{T}^2)}(\widehat{\CFD}(D^2\times S^1))=\F\langle\theta_1,\theta_2\rangle$, where $\theta_1(\bm{s})=\bm{s}$ and $\theta_2(\bm{s})=\rho_{23}\bm{s}$. One may easily check that $\partial\theta_1=2\theta_2=0$ and $\partial\theta_2=0$ so $\theta_1=\theta^{\mathrm{top}}$ and $\theta_2=\theta^{\mathrm{bot}}$. Under the above identifications, the 1-handle map
	\begin{align}
		\widehat{F}_{X_1}:\Mor^{\mathcal{A}_1}(-H_0,\varphi\boxtimes H_0)\to\Mor^{\mathcal{A}_2}(-H_0',\varphi'\boxtimes H_0')
	\end{align}
	is given by $f\mapsto f^{\mathrm{top}}$, where $f^{\mathrm{top}}=(\mathrm{id}\otimes h_1)\circ(f\otimes\theta^{\mathrm{top}})=f\otimes h_1$. The case of $\ell$ 1-handles is identical with the exception that one must instead append $k$ copies of the standard diagram for $D^2\times S^1$, in which case $\theta^{\mathrm{top}}=\theta_1^{\otimes \ell}$ and the codomain of $\widehat{F}_{X_1}$ is a space of morphisms of $\mathcal{A}(F(-\mathcal{Z})\#(\mathbb{T}^2)^{\#\ell})$-modules.
	
	For the 2-handle map $\widehat{F}_{X_2}:\widehat{\CF}(Y_0')\to\widehat{\CF}(Y_1')$, we needed some potentially different Heegaard splitting $Y_0'=H\cup_\psi H$ (we again assume that $H$ is 0-framed). However, by the Reidemeister--Singer theorem, after stabilizing sufficiently many times, we may arrange that $H_0'\cup_{\varphi'}H_0'$ and $H\cup_\psi H$ are isotopic Heegaard splittings so $H=H_0$ and $\psi=\xi^{-1}\circ\varphi'\circ\eta$, where $\eta,\xi:\partial H\to\partial H_0'$ are diffeomorphisms extending over $H=H_0$ (cf. \cite[Theorem 2.2]{Pitsch2008}). Then we may compute $\widehat{\CF}(Y_0')$ as 
	\begin{align}
		\begin{split}
			&\Mor^{\mathcal{A}_2}(-H,\psi\boxtimes H)\\&\cong\overline{\widehat{\CFD}(-H)}\boxtimes\mathcal{A}_2\boxtimes\widehat{\CFDA}(\psi)\boxtimes\widehat{\CFD}(H)\\&\simeq\overline{\widehat{\CFD}(-H)}\boxtimes\mathcal{A}_2\boxtimes\widehat{\CFDA}(\xi^{-1})\boxtimes\widehat{\CFDA}(\varphi')\boxtimes\widehat{\CFDA}(\eta)\boxtimes\widehat{\CFD}(H)\\&\simeq\overline{\widehat{\CFD}(-H)}\boxtimes\overline{\widehat{\CFDA}(-\xi^{-1})}\boxtimes\mathcal{A}_2\boxtimes\widehat{\CFDA}(\varphi')\boxtimes\widehat{\CFDA}(\eta)\boxtimes\widehat{\CFD}(H)\\&\cong\overline{\widehat{\CFDA}(-\xi^{-1})\boxtimes\widehat{\CFD}(-H)}\boxtimes\mathcal{A}_2\boxtimes\widehat{\CFDA}(\varphi')\boxtimes\widehat{\CFDA}(\eta)\boxtimes\widehat{\CFD}(H)\\&\simeq\overline{\widehat{\CFD}(-H_0')}\boxtimes\mathcal{A}_2\boxtimes\widehat{\CFDA}(\varphi')\boxtimes\widehat{\CFD}(H_0')\\&\cong\Mor^{\mathcal{A}_2}(-H_0',\varphi'\boxtimes H_0').
		\end{split}
	\end{align}
	Here, the homotopy equivalence in the third line is given to us by \cite[Lemma 4.5]{Hendricks2019}, which tells us that there is an unique homogeneous homotopy equivalence
	\begin{align}
		\widehat{\CFDA}(\psi)\simeq\widehat{\CFDA}(\xi^{-1})\boxtimes\widehat{\CFDA}(\varphi')\boxtimes\widehat{\CFDA}(\eta).
	\end{align}
	By \cite[Lemma 4.2]{Hendricks2019}, there are unique homogeneous homotopy equivalences $\widehat{\CFD}(H)\to\widehat{\CFD}(H_0')$ and $\widehat{\CFD}(-H)\to\widehat{\CFD}(-H_0')$ so this furnishes us with an algorithmically computable homotopy equivalence
	\begin{align}
		h_2:\Mor^{\mathcal{A}_2}(-H_0',\varphi'\boxtimes H_0')\to\Mor^{\mathcal{A}_2}(-H,\psi\boxtimes H)
	\end{align}
	of morphism complexes. Moreover, this map agrees up to homotopy with the homotopy equivalence associated to the map associated to a sequence of Heegaard moves (cf. \cite[proof of Theorem 5.1]{Hendricks2019}). The map
	\begin{align}
		\widehat{F}_{X_2}\circ\widehat{F}_{X_1}:\Mor^{\mathcal{A}_2}(-H_0,\varphi\boxtimes H_0)\to\Mor^{\mathcal{A}_2}(-H(\mathbb{L}),\psi\boxtimes H)
	\end{align}
	is then given by $\widehat{F}_{X_2}\circ\widehat{F}_{X_1}(f)=h_2(f^{\mathrm{top}})\circ\theta^{\mathrm{top}}$ by (\ref{square}).
	
	The case of 3-handles follows similarly to the case of 1-handles: if the cobordism $X_3:Y_1'\to Y_1$ consists of a single 3-handle addition, then $\widehat{F}_{X_3}:\widehat{\CF}(Y_1')\to\widehat{\CF}(Y_1)$ can be computed by decomposing $Y_1'$ as $Y_1\#(S^2\times S^1)$, in which case
	\begin{align}
		\widehat{F}_X(\bm{y}\otimes\theta)=\begin{cases}
			\bm{y}\,\,\,\,\textup{if $\theta=\Theta^{\mathrm{bot}}$}\\0\,\,\,\,\,\textup{else}.
		\end{cases}
	\end{align}
	In the morphism spaces setting, we leverage the fact that we have Heegaard splittings $Y_1'=H(\mathbb{L})\cup_\psi H=H_2'\cup_{\omega'}H_2'$, where $H_2'=H_2\natural(D^2\times S^1)$ and $\omega'=\omega\#\mathrm{id}_{\mathbb{T}^2}$ for some Heegaard splitting $Y_1=H_2\cup_\omega H_2$. As before, we may stabilize sufficiently many times so that $H(\mathbb{L})\cup_\psi H$ and $H_2'\cup_{\omega'}H_2'$ are isotopic Heegaard splittings and we obtain isomorphisms
	\begin{align}
		\widehat{\CFD}(H_2')\cong\widehat{\CFD}(H_2)\otimes_\F\widehat{\CFD}(D^2\times S^1)
	\end{align}
	and
	\begin{align}
		\begin{split}
			&\widehat{\CFDA}(\omega')\boxtimes\widehat{\CFD}(H_2')\\&\cong(\widehat{\CFDA}(\omega)\boxtimes \widehat{\CFD}(H_2))\otimes_\F(\widehat{\CFDA}(\mathbb{T}^2\times[0,1])\boxtimes\widehat{\CFD}(D^2\times S^1))
		\end{split}
	\end{align}
	of $\mathcal{A}(-\partial H_2\#\mathbb{T}^2)$-modules. There is then an unique homogeneous homotopy equivalence
	\begin{align}
		\begin{split}
			h_3:&\Mor^{\mathcal{A}(-\partial H_2\#\mathbb{T}^2)}(-H(\mathbb{L}),\psi\boxtimes H)\\&\to\Mor^{\mathcal{A}(-\partial H_2\#\mathbb{T}^2)}(-H_2\otimes_\F(D^2\times S^1),(\omega\boxtimes H_2)\otimes_\F(D^2\times S^1))
		\end{split}
	\end{align}
	induced by $h_1^{-1}$, which factors through $\Mor^{\mathcal{A}(-\partial H_2\#\mathbb{T}^2)}(-H_2',\omega'\boxtimes H_2')$
	so that the 3-handle map
	\begin{align}
		\widehat{F}_{X_3}:\Mor^{\mathcal{A}_2}(-H(\mathbb{L}),\psi\boxtimes H)\to\Mor^{\mathcal{A}_3}(-H_2,\omega\boxtimes H_2),
	\end{align}
	where $\mathcal{A}_3=\mathcal{A}(-\partial H_2)$, is then given by $((\mathrm{id}\otimes\overline{\theta}^{\mathrm{bot}})\circ h_3)(f)$, where $\overline{\theta}^{\mathrm{bot}}$ is the $\mathcal{I}$-linear dual of $\theta^{\mathrm{bot}}$. In summary, if $X=X_3\circ X_2\circ X_1$, we may compute the map $\widehat{F}_X$ at the chain level via $\widehat{F}_X(f)=((\mathrm{id}\otimes\overline{\theta}^{\mathrm{bot}})\circ h_3)(h_2(f^{\mathrm{top}})\circ\theta^{\mathrm{top}})$.
	
	Since each of the 1-, 2-, and 3-handle maps and the homotopy equivalences of morphism complexes at each step are algorithmically computable, Theorem \ref{MainTheorem} and \cite{Lipshitz2014} furnish us with an algorithm for computing $\widehat{F}_X$, whose steps we outline below:
	\begin{enumerate}
		\item Fix a Heegaard splitting $Y_0=H_0\cup_\varphi H_0$ which has been stabilized sufficiently many times so that all of the pairs of Heegaard splittings in each step described above become isotopic, then pick a factorization of the gluing map $\varphi$ into arcslides.
		\item Compute a basis $\{f_1,\dots,f_n\}$ for $H_*\Mor^{\mathcal{A}_1}(-H_0,\varphi\boxtimes H_0)$ consisting of explicit cycles in $\Mor^{\mathcal{A}_1}(-H_0,\varphi\boxtimes H_0)$.
		\item For each $f_i$, compute the map $f_i^{\mathrm{top}}\in\Mor^{\mathcal{A}_2}(-H_0',\varphi'\boxtimes H_0')$.
		\item Fix a (sufficiently stabilized) Heegaard splitting $Y_0'=H\cup_\psi H$ induced by a bouquet for a framed link $\mathbb{L}\subset Y_0'$ such that $Y_0'(\mathbb{L})=Y_1'$ and compute $\widehat{\CFD}(H)$ and a basis for $H_*\Mor^{\mathcal{A}_2}(H_0',H)$ in order to find the unique homogeneous homotopy equivalences which induce the homotopy equivalence $h_2$, and compute the latter.
		\item Compute $\widehat{\CFD}(-H(\mathbb{L}))$ and a basis for $H_*\Mor^{\mathcal{A}_2}(-H(\mathbb{L}),H)$ consisting of explicit cycles, identify $\theta^{\mathrm{top}}\in\Mor^{\mathcal{A}_2}(-H(\mathbb{L}),H)$ using this basis, and compute $h_2(f^{\mathrm{top}})\circ\theta^{\mathrm{top}}$.
		\item Compute $\widehat{\CFDA}(\psi)\boxtimes\widehat{\CFD}(H)$, a basis for $\Mor^{\mathcal{A}_2}(-H(\mathbb{L}),\psi\boxtimes H)$, and the homotopy equivalence $h_3$.
		\item Compute $\widehat{F}_X(f_i)=((\id\otimes\overline{\theta}^{\mathrm{bot}})\circ h_3)(h_2(f_i^{\mathrm{top}})\circ\theta^{\mathrm{top}})$ for $i=1,\dots,n$.
	\end{enumerate}
	%\nocite{*}
	\bibliographystyle{alpha}
	\bibliography{bib}

\begin{thebibliography}{LMW08}

\bibitem[Aur10]{Auroux2010}
Denis Auroux.
\newblock Fukaya categories of symmetric products and {Heegaard}-{Floer}
  homology.
\newblock {\em J. G\"{o}kova Geom. Topol.}, page 1–54, 2010.

\bibitem[Gut22]{Guth2022}
Gary Guth.
\newblock For exotic surfaces with boundary, one stabilization is not enough.
\newblock {\em arXiv preprint}, arXiv:2207.11847, 2022.

\bibitem[HL19]{Hendricks2019}
Kristen Hendricks and Robert Lipshitz.
\newblock Involutive bordered {F}loer homology.
\newblock {\em Transactions of the American Mathematical Society},
  372(1):389--424, apr 2019.

\bibitem[JTZ21]{Juhasz2021}
Andr{\'{a}}s Juh{\'{a}}sz, Dylan Thurston, and Ian Zemke.
\newblock Naturality and {M}apping {C}lass {G}roups in {H}eegaard {F}loer
  {H}omology.
\newblock {\em Memoirs of the American Mathematical Society}, 273(1338), sep
  2021.

\bibitem[Juh06]{Juhasz2006}
Andr{\'{a}}s Juh{\'{a}}sz.
\newblock Holomorphic discs and sutured manifolds.
\newblock {\em Algebraic {\&} Geometric Topology}, 6(3):1429--1457, oct 2006.

\bibitem[Lip06]{Lipshitz2006}
Robert Lipshitz.
\newblock A cylindrical reformulation of {H}eegaard {F}loer homology.
\newblock {\em Geom. Topol.}, 10:955--1096, 2006.
\newblock [Paging previously given as 955--1097].

\bibitem[LMW08]{LipshitzManolescuWang}
Robert Lipshitz, Ciprian Manolescu, and Jiajun Wang.
\newblock Combinatorial cobordism maps in hat {H}eegaard {F}loer theory.
\newblock {\em Duke Math. J.}, 145(2):207--247, 2008.

\bibitem[LOT11]{Lipshitz_2011}
Robert Lipshitz, Peter {Ozsváth}, and Dylan Thurston.
\newblock Heegaard {Floer} homology as morphism spaces.
\newblock {\em Quantum Topology}, page 381–449, 2011.

\bibitem[LOT14]{Lipshitz2014}
Robert Lipshitz, Peter~S Ozsv{\'{a}}th, and Dylan~P Thurston.
\newblock Computing {$\widehat{\mathit{HF}}$} by factoring mapping classes.
\newblock {\em Geometry {\&} Topology}, 18(5):2547--2681, dec 2014.

\bibitem[LOT15]{Lipshitz2015}
Robert Lipshitz, Peter~S. Ozsv\'{a}th, and Dylan~P. Thurston.
\newblock Bimodules in bordered {H}eegaard {F}loer homology.
\newblock {\em Geom. Topol.}, 19(2):525--724, 2015.

\bibitem[LOT16]{LOT2}
Robert Lipshitz, Peter~S. Ozsv\'{a}th, and Dylan~P. Thurston.
\newblock Bordered {F}loer homology and the spectral sequence of a branched
  double cover {II}: the spectral sequences agree.
\newblock {\em J. Topol.}, 9(2):607--686, 2016.

\bibitem[LOT18]{Lipshitz2018}
Robert Lipshitz, Peter~S. Ozsvath, and Dylan~P. Thurston.
\newblock Bordered {H}eegaard {F}loer homology.
\newblock {\em Mem. Amer. Math. Soc.}, 254(1216):viii+279, 2018.

\bibitem[MOT20]{ManolescuOzsvathThurston2020grid}
Ciprian Manolescu, Peter Ozsvath, and Dylan Thurston.
\newblock Grid diagrams and heegaard floer invariants.
\newblock 2020.

\bibitem[OS04]{OzsSz2004}
Peter Ozsv\'{a}th and Zolt\'{a}n Szab\'{o}.
\newblock Holomorphic disks and topological invariants for closed
  three-manifolds.
\newblock {\em Ann. of Math. (2)}, 159(3):1027--1158, 2004.

\bibitem[OS06]{Ozsvath2006}
Peter Ozsv\'{a}th and Zolt\'{a}n Szab\'{o}.
\newblock Holomorphic triangles and invariants for smooth four-manifolds.
\newblock {\em Adv. Math.}, 202(2):326--400, 2006.

\bibitem[Pit08]{Pitsch2008}
Wolfgang Pitsch.
\newblock Integral homology {$3$}-spheres and the {J}ohnson filtration.
\newblock {\em Transactions of the American Mathematical Society},
  360(06):2825--2848, jan 2008.

\bibitem[SW10]{Sarkar2010}
Sucharit Sarkar and Jiajun Wang.
\newblock An algorithm for computing some {H}eegaard {F}loer homologies.
\newblock {\em Annals of Mathematics}, 171(2):1213--1236, mar 2010.

\bibitem[Zar11]{Zarev2011}
Rumen Zarev.
\newblock {\em Bordered {S}utured {F}loer {H}omology}.
\newblock ProQuest LLC, Ann Arbor, MI, 2011.
\newblock Thesis (Ph.D.)--Columbia University.

\bibitem[Zem19]{zemke2019graph}
Ian Zemke.
\newblock Graph cobordisms and {H}eegaard {F}loer homology.
\newblock 2019.

\bibitem[Zem21a]{Zemke2021}
Ian Zemke.
\newblock Duality and mapping tori in {H}eegaard {F}loer homology.
\newblock {\em Journal of Topology}, 14(3):1027--1112, sep 2021.

\bibitem[Zem21b]{Zemke2021b}
Ian Zemke.
\newblock A graph {TQFT} for hat {H}eegaard {F}loer homology.
\newblock {\em Quantum Topology}, 12(3):439--460, sep 2021.

\bibitem[Zha16]{Zhan2016}
Bohua Zhan.
\newblock Combinatorial proofs in bordered {H}eegaard {F}loer homology.
\newblock {\em Algebraic {\&} Geometric Topology}, 16(5):2571--2636, nov 2016.

\end{thebibliography}
\end{document}